\newtheorem{theorem}{Theorem}[section]
\newtheorem{lemma}[theorem]{Lemma}
\newtheorem{proposition}[theorem]{Proposition}
\newtheorem{definition}[theorem]{Definition}
\newtheorem{remark}[theorem]{Remark}
\theoremstyle{definition} \theoremstyle{remark}
\numberwithin{equation}{section}
\begin{document}

\title{{\bf Sharper $L^1$-convergence rates of weak entropy
solutions to damped compressible Euler equations}\thanks{The work was supported partly by the NSF of China (12171094, 11831011), and the Shanghai Key
Laboratory for Contemporary Applied Mathematics (08DZ2271900).}}
\author{Jun-Ren Luo$^{a}$,  Ti-Jun Xiao$^{b}$ \thanks{Corresponding author. \ E-mail: \ tjxiao@fudan.edu.cn}
\\{\small $^a$  College of Science, University of Shanghai for Science and Technology, Shanghai 200093, China}\\
{\small $^b$  Shanghai Key
Laboratory for Contemporary Applied Mathematics}\\ {\small School of Mathematical Sciences, Fudan University, Shanghai 200433, China}}

\date{}
\maketitle

\begin{minipage}[2cm]{14cm}

\noindent {\bf Abstract:} We consider the asymptotic behavior of compressible isentropic flow when the initial mass is finite, which is modeled by the compressible Euler equation with frictional damping. It is shown in \cite{HUA} (resp.\cite{GEN}) that any $L^{\infty}$ weak entropy solution of damped compressible Euler equation converges to the Barenblatt solution with finite mass in $L^1$ norm, with convergence rates depending on the adiabatic gas exponent $\gamma$ in the case of $1<\gamma<3$ (resp.$\gamma\ge2$). Whether or not these convergence rates can be improved remains an interesting and challenging open question. In this paper, we obtain a better $L^1$ convergence rate than that in \cite{GEN},
for any $\gamma\ge2$, through a new perspective on the relationship between the density function and the Barenblatt solution of the porous medium equation.
Furthermore, making intensive analysis of some relevant convex functions, we are able to obtain the same form of $L^1$ convergence rate for $1<\gamma<9/7$, which is  better than that in \cite{HUA} as well.

\vspace{0.4cm}

\noindent {\bf Keywords:}\quad Convergence rates; Compressible Euler equations; Barenblatt solution; convexity.

\end{minipage}

\section{Introduction}

 In this paper, we consider the compressible Euler equations with frictional damping
\begin{eqnarray}\label{1.1}
\left\{\begin{array}{ll}
\rho_t+(\rho u)_x=0, \\
(\rho u)_t+\left(\rho u^2+p(\rho)\right)_x=-\alpha\rho u,
\end{array} \right.
\end{eqnarray}
with finite initial mass
\begin{equation}\label{r0u0}
(\rho, u)(x, 0)=\left(\rho_0, u_0\right)(x), \quad \rho_0(x) \geq 0, \quad \int^{+\infty}_{-\infty} \rho_0(x) d x=M>0,
\end{equation}
where $\rho, u$, and $p$ denote the density, velocity, and pressure, respectively. We assume the flow is a polytropic perfect gas, then $p(\rho)=\kappa \rho^\gamma$ with $\kappa=\frac{(\gamma-1)^2}{4\gamma}$, and $\gamma>1$ is the adiabatic gas exponent. The positive constant $\alpha$ models frictional force induced by the medium, and we assume $\kappa=\alpha$; the identity of the two constants will simplify the form of the entropy functions we use below (cf. \cite{HUA,LIO}).

The inertial terms in the momentum equation decay to zero faster than other terms, which is caused by the dissipative nature of frictional force. That is, the pressure gradient force is balanced by the frictional force, which was described as Darcy's law; see Hsiao and Liu \cite{HSI}. Hence, the system \eqref{1.1} is equivalent to the following decoupled system, if taking time asymptotically,
\begin{eqnarray}\label{1.3}
\left\{\begin{array}{ll}
\bar{\rho}_t=\left(\bar{\rho}^\gamma\right)_{x x}, \\
\bar{m}=-\left(\bar{\rho}^\gamma\right)_x,
\end{array} \right.
\end{eqnarray}
which was first justified by Hsiao and Liu in \cite{HSI, HSI2}. The first equation is the famous porous medium equation (PME), which is a typical degenerate parabolic equation. The second equation is the Darcy's law, which is expected to formulate the momentum.  Regarding the convergence of the solution of \eqref{1.1} toward the Barenblatt solution $\bar{\rho}(x,t)$ (a fundamental solution of PME), it is particularly relevant to consider the finite mass case (see the inspiring paper of Liu \cite{LIU}).

Mathematical study of system \eqref{1.1} originated from the pioneering work of Nishida \cite{NIS} in the 1970s, and since then, the system has been
extensively studied in the literature. Among the numerous works, we mention mainly two cases. One case is away from vacuum (cf. \cite{MAR, SMO}), in which the system \eqref{1.1} was transferred to the damped $p$-system by changing to the Lagrangian coordinates. The global existence of weak solutions in $L^p$ was established in \cite{DIN, DIP, LIO1}, by the method of compensated compactness, and in \cite{DAF1, DAF2, LUS} the global BV solutions were proved. For small smooth or piecewise smooth solutions away from vacuum based on the energy estimates, we refer the reader to \cite{LUO0, NIS1, ZHA}, and references therein.
The other case is the occurrence of a vacuum in the solution, which becomes quiet different and difficult to handle, mainly due to the interaction of nonlinear convection, lower order dissipation of damping and the resonance due to the vacuum. In spite of these difficulties, the global weak solution with vacuum was constructed in $L^{\infty}$ space by using the method of compensated compactness; we refer to Ding et al.\cite{DIN} for $1<\gamma\le\frac{5}{3}$, and Huang and pan \cite{HUA1} for $1<\gamma<3$. Using the vanishing viscosity method, Zhu in \cite{ZHU0} studied the asymptotic behavior of weak entropy solution. In addition, under the assumption of physical vacuum free boundary condition, Luo and Zeng \cite{LUO} considered the existence and pointwise convergence rates of classical solutions. Later, Zeng \cite{ZEN}  considered the global existence of smooth solutions and the convergence to Barenblatt's self-similar solutions for spherically symmetric motions in three dimensions. The relationship between the Barenblatt solutions and the nonlinear fractional heat equation was researched in \cite{VAZ}.

In \cite{HUA0}, the authors introduced the entropy dissipation method, which is an effective approach in studying the large time asymptotic behavior for $L^{\infty}$ weak entropy solutions for hyperbolic conservation laws with dissipation. It is proved in \cite{HUA} that any $L^{\infty}$ weak entropy solution of problem \eqref{1.1}- \eqref{r0u0} converges to the Barenblatt solution of equation \eqref{1.3} with the same mass in $L^1$ as
$$
\|(\rho-\bar{\rho})(\cdot, t)\|_{L^1} \leq C(1+t)^{-\frac{1}{4(\gamma+1)}+\varepsilon},
$$
for any $\varepsilon>0$ with $1<\gamma<3$. The author used a comprehensive entropy analysis, capturing the dissipative character of the problem. Later, the convergence rates were improved to
$$
\|(\rho-\bar{\rho})(\cdot, t)\|_{L^1} \leq C(1+t)^{-\frac{1}{(\gamma+1)^2}+\varepsilon},
$$
for $\gamma\ge2$ in \cite{GEN}, where the authors took the decay property of $\|(\rho-\bar{\rho})(\cdot, t)\|_{L^1}$ into account and introduced an iterative method.

Moreover, we refer the reader to \cite{CHE1, CUI1, CUI2, ZHU}, and references therein, regarding the convergence rates to the Barenblatt solutions for the compressible Euler equations with time-dependent damping, that is, the right-hand side of $\eqref{1.1}_2$ becomes $-\frac{\alpha}{(1+t)^{\lambda}}\rho u$. Pan \cite{PAN1, PAN2} proved that $\lambda= 1, \alpha = 2$ was the critical threshold between the global existence and non-existence of $C^1$ solutions in one dimension. Later, the critical case $\lambda = 1, \alpha > 2$ with one dimension was studied in \cite{GEN1}; see \cite{HOU, HOU1} for the multi-dimensional problem. In fact, it is natural and worthy to study system behaviors when the damping term is dependent on time. For hyperbolic equations with time-dependent damping, we refer to \cite{GHI, LUO1, LUO2,WIR1,WIR2} and references therein.

On the other hand, the original $L^{1}$ and $L^{\gamma}$ convergence rate estimates due to \cite{HUA0,HUA}, for $L^{\infty}$ weak entropy solutions to the damped compressible Euler equations \eqref{1.1}, have remained unimproved. One wonders whether these rates can be actually improved or not (cf.\cite[page 687]{HUA0}). In fact, it appears rather tricky to make some improvements. In the present paper, we establish better $L^{1}$ and $L^{\gamma}$ convergence rate estimates than those in \cite{HUA} for any $\gamma\ge2$, and furthermore obtain better $L^{1}$ and $L^{\gamma+1}$ estimates than those in \cite{HUA0} for $1<\gamma<\frac{9}{7}$. In both cases, the $L^{1}$ estimates we obtain have the same form. See Theorem \ref{thm1} and Remark \ref{rm}.

The outline of this paper is the following. In section 2, we give a quick review of some information on Barenblatt's solution, and present our main result: Theorem \ref{thm1}.  Then, in section 3, we state or derive some important lemmas to help prove the main result. Section 4 shows some important properties of the entropy-flux pair. In section 5, we devote ourselves to proving Theorem \ref{thm1}. Some technical results for the case of $1<\gamma<\frac{9}{7}$ used in section 5 are stated/proved in Appendix.

\section{Preliminaries and main result}
Throughout this paper, $||\cdot||_p$ stands for $L^{p}(R)$-norm ($1\le p\le+\infty$). For simplicity of notation, in particular, we use $\| \cdot\|$ instead of $\| \cdot\|_2$

For a binary function $F(\rho, m)$, set $v=(\rho, m)^t, \bar{v}=(\bar{\rho}, \bar{m})^t$, where $(\cdot, \cdot)^t$ means the transpose of the vector $(\cdot, \cdot)$. Then, $F(\rho, m)=F(v^{t})$ and we write, for convenience,
\begin{align*}
F_*&=F(v)-F(\bar{v})-\nabla F(\bar{v})\left(v-\bar{v}\right)\\
&=F(v)-F(\bar{v})-F_{\bar{\rho}}\left(\rho-\bar{\rho}\right)-F_{\bar{m}}\left(m-\bar{m}\right),
\end{align*}
where $\nabla F(v):=\left(F_\rho, F_m\right)$. Specially, for a unary function $F(\rho)$, one has
$$
\left(F(\rho)\right)_*=F(\rho)-F(\bar{\rho})-F_{\bar{\rho}}\left(\rho-\bar{\rho}\right).
$$

Next, we state the definition of $L^{\infty}$ weak entropy solution (cf. \cite{GEN, HUA}).
\begin{definition}
The pair $(\rho, m)(x, t) \in L^{\infty}$ is called an entropy solution of system \eqref{1.1}-\eqref{r0u0}, if for any nonnegative test function $\phi \in \mathcal{D}\left(\mathbf{R} \times \mathbf{R}_{+}\right)$, it holds that
$$
\left\{\begin{array}{l}
\iint_{t>0}\left(\rho \phi_t+m \phi_x\right) d x d t+\int_{\mathbf{R}} \rho_0(x) \phi(x, 0) d x=0, \\
\iint_{t>0}\left[m \phi_t+\left(\frac{m^2}{\rho}+p(\rho)\right) \phi_x-m \phi\right] d x d t+\int_{\mathbf{R}} m_0(x) \phi(x, 0) d x=0,
\end{array}\right.
$$
and
\begin{align*}
\eta_t+q_x+\alpha\eta_m m \leq 0,
\end{align*}
in the sense of distributions, where $(\eta, q)$ is any weak convex entropy-flux pair.
\end{definition}

According to \cite{LIO}, all weak entropies of \eqref{1.1} are given by the following formula
\begin{equation}\label{etaq}
\begin{aligned}
\eta(\rho, m) & =\int g(\xi) \chi(\xi ; \rho, u) \mathrm{d} \xi=\rho \int_{-1}^1 g\left(u+z \rho^\theta\right)\left(1-z^2\right)^\lambda \mathrm{d} z, \\
q(\rho, m) & =\int g(\xi)(\theta \xi+(1-\theta) u) \chi(\xi ; \rho, u) \mathrm{d} \xi \\
& =\rho \int_{-1}^1 g\left(u+z \rho^\theta\right)\left(u+\theta z \rho^\theta\right)\left(1-z^2\right)^\lambda \mathrm{d} z,
\end{aligned}
\end{equation}
where $m=\rho u, \theta=\frac{\gamma-1}{2}, \lambda=\frac{3-\gamma}{2(\gamma-1)}, g(\xi)$ is any smooth function of $\xi$, and
$$
\chi(\xi ; \rho, u)=\left(\rho^{\gamma-1}-(\xi-u)^2\right)_{+}^\lambda .
$$
Due to the extensive use of $\eta(\rho, m)$ in the proof process, we use
$$
g=g(u+z\rho^{\theta}),\quad \bar{g}=\bar{g}(\bar{u}+z\bar{\rho}^{\theta})
$$
for convenience without confusions.

Next, we recall some basics about Barenblatt's solution (cf. \cite{ARO,BAR}).
Consider
\begin{eqnarray}\label{3.1}
\left\{\begin{array}{ll}
\bar{\rho}_t=\left(\bar{\rho}^\gamma\right)_{x x}, \\
\bar{\rho}(x,-1)=M \delta(x), \quad M>0,
\end{array} \right.
\end{eqnarray}
which admits a unique solution given by
\begin{equation}\label{barrho}
\bar{\rho}(x, t)=(1+t)^{-\frac{1}{\gamma+1}}\left\{\left(A_0-B_0 (1+t)^{-\frac{2}{\gamma+1}}x^2\right)_{+}\right\}^{\frac{1}{\gamma-1}},
\end{equation}
where $(f)_{+}=\max \{0, f\}$,
$$
B_0=\frac{\gamma-1}{2(\gamma+1)}, \quad \text { and } \quad A_{0}^{\frac{\gamma+1}{2(\gamma-1)}}=M \sqrt{B_0}\left(\int_{-1}^1\left(1-y^2\right)^{1 /(\gamma-1)} d y\right)^{-1} \text {. }
$$
At the same time, the velocity is given by
\begin{equation}\label{ux}
\bar{u}(x, t)=\frac{x}{(\gamma+1)(1+t)},\,|x| < \sqrt{\frac{A_0}{B_0}}(1+t)^{\frac{1}{\gamma+1}}.
\end{equation}
Consequently,
$$
\left(\frac{\bar{m}}{\bar{\rho}}\right)_x=\bar{u}_x=\frac{1}{(\gamma+1)(1+t)},\,|x| < \sqrt{\frac{A_0}{B_0}}(1+t)^{\frac{1}{\gamma+1}}.
$$

It is easy to see that the derivative of $\bar{\rho}$ is not continuous across the interface between the gas and vacuum, which is caused by the degeneracy at vacuum. Indeed, $\bar{\rho}$ is a weak solution to \eqref{3.1} satisfying
$
\int_{-\infty}^{+\infty} \bar{\rho} \mathrm{d} x=M,
$
and
$$
\bar{\rho}=0, \quad \text { if } \quad|x| \ge \sqrt{\frac{A_0}{B_0}}(1+t)^{\frac{1}{\gamma+1}}.
$$
Therefore, for a limited period of time $\bar{\rho}$ has compact support, which reflects the phenomenon of finite speed of propagation of porous medium equation.

Now, we present our main result.

\begin{theorem}\label{thm1}
Suppose that $\rho_0(x) \in L^1(\mathbf{R}) \cap L^{\infty}(\mathbf{R}), u_0(x) \in L^{\infty}(\mathbf{R})$ and
$$
M=\int_{-\infty}^{+\infty} \rho_0(x) d x>0, \quad \rho_0(x) \geq 0 .
$$
Let $1<\gamma<+\infty$, and $(\rho, m)$ be an $L^{\infty}$ entropy solution of the Cauchy problem \eqref{1.1}-\eqref{r0u0}. Let $\bar{\rho}$ be the Barenblatt solution of porous medium equation \eqref{1.3} with mass $M$ and $\bar{m}=-\left(\bar{\rho}^\gamma\right)_x$. Define
$$
y(x, t)=-\int_{-\infty}^x(\rho-\bar{\rho})(r, t) d r .
$$
If $y(x, 0) \in L^2(\mathbf{R})$, then for any $\varepsilon>0$ and $t>0$, it holds that\\
\noindent{(i)} If $2\le\gamma<+\infty$, then

\begin{equation}\label{1rr+1}
\begin{aligned}
& \|(\rho-\bar{\rho})(\cdot, t)\|_{L^\gamma}^\gamma \leq C_{\varepsilon}(1+t)^{-\frac{\gamma^2+\gamma-1}{(\gamma+1)^2}+\varepsilon}, \\
& \|(\rho-\bar{\rho})(\cdot, t)\|_{L^1} \leq C_{\varepsilon}(1+t)^{-\frac{\gamma}{2(\gamma+1)^2}+\varepsilon};
\end{aligned}
\end{equation}

\noindent{(ii)} If $1<\gamma< \frac{9}{7}$, then
\begin{equation}\label{1rr+1'}
\begin{aligned}
& \|(\rho-\bar{\rho})(\cdot, t)\|_{L^\gamma+1}^{\gamma+1} \leq C_{\varepsilon}(1+t)^{-\frac{\gamma^2+2\gamma}{(\gamma+1)^2}+\varepsilon}, \\
& \|(\rho-\bar{\rho})(\cdot, t)\|_{L^1} \leq C_{\varepsilon}(1+t)^{-\frac{\gamma}{2(\gamma+1)^2}+\varepsilon} .
\end{aligned}
\end{equation}

\end{theorem}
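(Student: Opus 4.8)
The plan is to run the entropy dissipation method of \cite{HUA0,HUA} but to extract more decay by exploiting the antiderivative $y(x,t)=-\int_{-\infty}^x(\rho-\bar\rho)\,dr$ together with a sharp analysis of the quadratic part of the relative entropy. First I would record the basic energy-type identity: introduce the entropy $\eta^*(\rho,m)$ adapted to $(\ref{1.1})$ (the mechanical energy $\tfrac12 m^2/\rho+\tfrac{\kappa}{\gamma-1}\rho^\gamma$), form its relative version $\eta^*_*$ around the Barenblatt profile $(\bar\rho,\bar m)$, multiply the equation for $y$ (which satisfies $y_t = m-\bar m$ and $y_{tt}+\alpha y_t = -(p(\rho)-p(\bar\rho))_x + \text{lower order}$) and integrate in $x$. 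This produces, after using Darcy's law $\bar m=-(\bar\rho^\gamma)_x$ and the PME for $\bar\rho$, a differential inequality of the form
\begin{equation*}
\frac{d}{dt}\Big(\tfrac12\|y_t\|^2 + E(t)\Big) + \alpha\|y_t\|^2 + D(t) \le \text{(error terms)},
\end{equation*}
where $E(t)\simeq \int (p(\rho)-p(\bar\rho))\,y_x\,dx \gtrsim \int (\rho-\bar\rho)_*^{\,?}$ controls a weighted $L^2$-type quantity in $y$, and $D(t)$ is the genuine dissipation coming from $p(\rho)-p(\bar\rho)$.

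The heart of the improvement — and the step I expect to be the main obstacle — is a pointwise convexity estimate relating the density deviation to the Barenblatt solution. Specifically, one must show that on the support of $\bar\rho$,
\begin{equation*}
\big(p(\rho)\big)_* = p(\rho)-p(\bar\rho)-p'(\bar\rho)(\rho-\bar\rho) \;\gtrsim\; \bar\rho^{\,\gamma-2}(\rho-\bar\rho)^2 + |\rho-\bar\rho|^{\gamma}\quad(\gamma\ge 2),
\end{equation*}
and, crucially, a matching \emph{upper} bound of the same shape, so that the dissipation $D(t)$ can be converted into a decaying multiple of $\|(\rho-\bar\rho)(\cdot,t)\|_{L^\gamma}^\gamma$ and, via the weight $\bar\rho^{\gamma-2}$ and the time-decay $\bar\rho\sim(1+t)^{-1/(\gamma+1)}$, of $\|y(\cdot,t)\|^2$. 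For $1<\gamma<9/7$ the relevant convex function $p(\rho)=\kappa\rho^\gamma$ is no longer uniformly convex in the right way near vacuum, so one instead works with an auxiliary convex function (this is exactly the ``intensive analysis of some relevant convex functions'' promised in the abstract, with the technical lemmas deferred to the Appendix) and one picks up the weaker exponent that forces the restriction $\gamma<9/7$; the $L^{\gamma+1}$ norm appears in place of $L^\gamma$ because the natural coercive lower bound degenerates to $|\rho-\bar\rho|^{\gamma+1}$ in that regime.

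With these convexity bounds in hand, I would feed them into the iteration of \cite{GEN}: assume inductively that $\|(\rho-\bar\rho)(\cdot,t)\|_{L^1}\le C(1+t)^{-a_k+\varepsilon}$; use the interpolation $\|\rho-\bar\rho\|_{L^2}^2 \le \|\rho-\bar\rho\|_{L^1}^{?}\|\rho-\bar\rho\|_{L^\gamma}^{?}$ (respectively $L^{\gamma+1}$) together with the Gagliardo--Nirenberg inequality $\|y\|_{L^\infty}$ or $\|y_x\|_{L^1}=\|\rho-\bar\rho\|_{L^1}$ bounds to close the energy inequality; integrate the resulting ODE for $\tfrac12\|y_t\|^2+E(t)$ to get a new decay rate $\|y(\cdot,t)\|\le C(1+t)^{-b_{k+1}+\varepsilon}$; then recover $\|(\rho-\bar\rho)(\cdot,t)\|_{L^\gamma}^\gamma$ from $D(t)$ and, by interpolating $\|\rho-\bar\rho\|_{L^1}\lesssim \|y_x\|_{L^1}$ against $\|y\|$ via $\|\rho-\bar\rho\|_{L^1}\le \|y\|^{\,s}\,\|(\rho-\bar\rho)\|_{L^\gamma}^{1-s}$-type bounds, update $a_{k+1}$. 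The fixed point of the recursion $a_k\to a_{k+1}$ is precisely $\frac{\gamma}{2(\gamma+1)^2}$, and tracking the companion estimate gives the exponents $\frac{\gamma^2+\gamma-1}{(\gamma+1)^2}$ (case $\gamma\ge2$) and $\frac{\gamma^2+2\gamma}{(\gamma+1)^2}$ (case $1<\gamma<9/7$) in $(\ref{1rr+1})$--$(\ref{1rr+1'})$; the $\varepsilon$ absorbs the logarithmic losses from each iteration step. Controlling the error terms uniformly — in particular the terms supported near the moving interface $|x|=\sqrt{A_0/B_0}(1+t)^{1/(\gamma+1)}$ where $\bar\rho$ vanishes and $\bar u_x$ blows up relative to $\bar\rho$ — is the delicate book-keeping that makes the argument work, and is where the hypothesis $y(\cdot,0)\in L^2$ is used to start the induction.
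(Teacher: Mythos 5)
Your proposal does not actually reach the improved rates: the step you designate as the heart of the argument is already in the literature, and the step that would produce the new exponent is missing. The two-sided convexity bound you ask for, $P_*\simeq(\rho^{\gamma-1}+\bar\rho^{\gamma-1})(\rho-\bar\rho)^2$ together with coercivity of the type $|\rho-\bar\rho|^{\gamma}$ or $|\rho-\bar\rho|^{\gamma+1}$, is exactly Lemma \ref{lem2} (taken from \cite{HUA0,HUA}); fed into the iteration of \cite{GEN} it is known to yield only the exponent $\frac{1}{(\gamma+1)^2}$ for $\gamma\ge2$, and you assert, without exhibiting the recursion, that its fixed point is now $\frac{\gamma}{2(\gamma+1)^2}$ --- there is no mechanism in your sketch that moves that fixed point, since the only input you add is what \cite{GEN} already had. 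The paper uses no iteration at all. Its new ingredient for $\gamma\ge2$ is the inequality \eqref{gam1} of Lemma \ref{lem1},
\[
\rho^{\gamma+1}-\bar\rho^{\gamma+1}-(\gamma+1)\bar\rho^{\gamma}(\rho-\bar\rho)\ \ge\ c\left(\rho^{\gamma}-\bar\rho^{\gamma}-\gamma\bar\rho^{\gamma-1}(\rho-\bar\rho)\right)^{\frac{\gamma+1}{\gamma}},
\]
which, combined with Lemma \ref{lem2}, gives $P_*\le C\left[(\rho^\gamma-\bar\rho^\gamma)(\rho-\bar\rho)\right]^{\gamma/(\gamma+1)}$. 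This is what allows the time-weighted source term $\int_0^t(1+\tau)^{k(\varepsilon)-1}\int\bar\rho^{\delta}P_*\,\bar\rho^{-\delta}\,dx\,d\tau$, with the large weight $k(\varepsilon)=\frac{\gamma^2+\gamma-1}{(\gamma+1)^2}-2\varepsilon$, to be absorbed in one pass (H\"older with the $\bar\rho^{\delta}$ weight, Lemma \ref{lem4}, Young) into the spacetime bound $\int_0^t\int(\rho^\gamma-\bar\rho^\gamma)(\rho-\bar\rho)\,dx\,d\tau\le C$ of Lemma \ref{lem3}; the $L^1$ rate $\frac{\gamma}{2(\gamma+1)^2}$ then follows from a single Cauchy--Schwarz against $\bar\rho^{2-\gamma}$, not from an induction seeded by $y(\cdot,0)\in L^2$ (that hypothesis only enters the a priori estimate of Lemma \ref{lem3}).

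For $1<\gamma<\frac{9}{7}$ your sketch also misidentifies where the work and the restriction lie. The paper does not merely replace $p(\rho)$ by ``an auxiliary convex function'': it switches to the weak entropy generated by $g(\xi)=\frac{(\gamma-1)^2}{2\gamma(\gamma+1)}|\xi|^{2\gamma/(\gamma-1)}$, splits $\eta=\frac{\gamma-1}{2\gamma}(A+B)$ as in \eqref{A*B*}, and must prove that $A$, $D=M_0A+M_1\rho^{\gamma+1}+M_2m^2-B$ and $\eta-C_0\rho^{\gamma+1}$ are convex in $(\rho,m)$, so that $A_*$, $D_*$, $E_*\ge0$ (Lemmas \ref{lem7}, \ref{lem69}, \ref{lem610}); without these, $B_*$ has no sign and the dissipation $\int A_*$ cannot be compared with $\eta_*$, so the weighted inequality \eqref{nuvar} cannot be closed. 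These Hessian/Taylor/Beta-function estimates are the bulk of the proof, and the threshold $\gamma<\frac{9}{7}$ comes from the requirement $k\ge4$ (equivalently $n=\frac{2}{\gamma-1}>7$) in Lemma \ref{lem65}, not from a degenerating coercivity exponent as you suggest. So the proposal, as written, would reproduce at best the rates of \cite{HUA,GEN}, and the genuinely new ideas (Lemma \ref{lem1} for $\gamma\ge2$; the convexity analysis of $A$, $D$, $E$ for small $\gamma$) are absent.
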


\begin{remark}\label{rm}
It is shown in \cite{GEN} for $\gamma\ge2$ and \cite{HUA} for $1<\gamma<3$ that $\|(\rho-\bar{\rho})(\cdot, t)\|_{L^1}\leq C(1+t)^{-\frac{1}{(\gamma+1)^2}+\varepsilon}$ and $\|(\rho-\bar{\rho})(\cdot, t)\|_{L^1}\leq C(1+t)^{-\frac{1}{4(\gamma+1)}+\varepsilon}$, respectively. It is easy to check that $\frac{\gamma}{2(\gamma+1)^2}>\frac{1}{4(\gamma+1)}$ when $\gamma>1$, and $\frac{\gamma}{2(\gamma+1)^2}>\frac{1}{(\gamma+1)^2}$ when $\gamma\ge2$. This means that the  $L^1$ convergence rates in Theorem \ref{thm1} are better. In addition, we note that the $L^1$ convergence rate estimates in \eqref{1rr+1} and \eqref{1rr+1'} are of the same form.

To prove \ref{thm1}, we will follow the strategy in \cite{HUA,GEN}, employing the entropy inequality (see Lemma \ref{lem3} below) associated with an appropriate entropy $\eta$. For $\gamma\ge2$, we choose (as in \cite{GEN})
\begin{equation}\label{etae}
\eta=\frac{m^2}{2 \rho}+\frac{\kappa}{\gamma-1} \rho^\gamma,
\end{equation}
which takes the form of mechanical energy. This $\eta$ measures the $L^{\gamma}$ norm while the entropy inequality measures the $L^{\gamma+1}$ norm in density, and the mismatch between the exponents poses an essential difficulty in obtaining sharp decay rates. In order to overcome this difficulty, our idea is to find the relationship between
$$
\rho^{\gamma+1}-\bar{\rho}^{\gamma+1}-(\gamma+1)\bar{\rho}^{\gamma}(\rho-\bar{\rho})
~
\mbox{and}
~
\rho^{\gamma}-\bar{\rho}^{\gamma}-\gamma\bar{\rho}^{\gamma-1}(\rho-\bar{\rho});
$$
see Lemma \ref{lem1}.

For $1<\gamma<\frac{9}{7}$, $\eta$ in \eqref{etae} is no longer suitable, since
$$
C|\rho-\bar{\rho}|^2 \leq \rho^\gamma-\bar{\rho}^\gamma-\gamma \bar{\rho}^{\gamma-1}(\rho-\bar{\rho}) \leq C'|\rho-\bar{\rho}|^\gamma,
$$
for $1<\gamma\le2$. That is, the index 2 of $|\rho-\bar{\rho}|^2$ is not precise enough. Hence, we should make use of Lemma \ref{lem2} and estimate
$
\rho^{\gamma+1}-\bar{\rho}^{\gamma+1}-(\gamma+1) \bar{\rho}^\gamma(\rho-\bar{\rho}).
$
Consequently, the index of $\rho$ in $\eta$ should be raised to $\gamma+1$ compared to $\gamma$ in \eqref{etae}(cf.\cite{HUA}). However, some new difficulties arise besides the lack of regularity and the resonance near vacuum. The main difficulty is to show that $A_*,D_*$ and $(\eta-C_0\rho^{\gamma+1})_*$ (defined respectively in \eqref{A*B*}, Lemma \ref{lem69} and Lemma \ref{lem610}) are nonnegative, and therefore justify the convexity of $A, D$ and $\eta-C_0\rho^{\gamma+1}$. We will employ Taylor's theorem and make in-depth analysis of the associated Hessian matrix to deal with the issue. This turns out to be a particularly challenging task. The restriction of $\gamma<\frac{9}{7}$ in Theorem \ref{thm1}-(ii) stems from $k\ge4$ in Lemma \ref{lem65}.
\end{remark}

\section{Some auxiliary results}

In this section, we state/prove some auxiliary results, which are important for proving our main result.

\begin{lemma}\label{lem1}
Let $1<\gamma<+\infty$, and $0\leq \rho,\bar{\rho}\leq C$. Then, there exists a constant $c>0$, such that
\begin{equation}\label{gam1}
\rho^{\gamma+1}-\bar{\rho}^{\gamma+1}-(\gamma+1)\bar{\rho}^{\gamma}(\rho-\bar{\rho})\ge c(\rho^{\gamma}-\bar{\rho}^{\gamma}-\gamma\bar{\rho}^{\gamma-1}(\rho-\bar{\rho}))^{\frac{\gamma+1}{\gamma}}.
\end{equation}
\end{lemma}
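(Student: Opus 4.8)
The plan is to reduce the two-variable inequality to a one-variable statement and then analyze the resulting function by elementary calculus, paying special attention to the behaviour near the degenerate points.

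\textbf{Reduction to one variable.} Since both sides of \eqref{gam1} are homogeneous — the left side of degree $\gamma+1$ and the right side (after raising to the power $\frac{\gamma+1}{\gamma}$) also of degree $\gamma+1$ — I would first dispose of the trivial case $\bar\rho=0$, where the inequality becomes $\rho^{\gamma+1}\ge c\,(\rho^\gamma)^{(\gamma+1)/\gamma}=c\,\rho^{\gamma+1}$, true for $c\le1$. For $\bar\rho>0$, set $s=\rho/\bar\rho\in[0,\infty)$ (bounded above since $\rho,\bar\rho\le C$, but the ratio need not be bounded, so I must treat $s\to\infty$ too; actually $\rho\le C$ and $\bar\rho$ can be small, so $s$ ranges over all of $[0,\infty)$). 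Dividing \eqref{gam1} by $\bar\rho^{\gamma+1}$ reduces the claim to showing there is $c>0$ with
\begin{equation*}
\Phi(s):=s^{\gamma+1}-1-(\gamma+1)(s-1)\ \ge\ c\,\bigl(s^{\gamma}-1-\gamma(s-1)\bigr)^{\frac{\gamma+1}{\gamma}}\ =:c\,\Psi(s)^{\frac{\gamma+1}{\gamma}}
\end{equation*}
for all $s\ge0$. Both $\Phi$ and $\Psi$ are nonnegative (strict convexity of $t\mapsto t^{\gamma+1}$ and $t\mapsto t^\gamma$ on $[0,\infty)$), and both vanish to second order only at $s=1$, so the natural object to study is the ratio $R(s):=\Phi(s)/\Psi(s)^{(\gamma+1)/\gamma}$, and it suffices to show $\inf_{s\ge0}R(s)>0$.

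\textbf{Controlling the ratio.} I would check the three regimes. (a) Near $s=1$: Taylor expansion gives $\Phi(s)=\frac{(\gamma+1)\gamma}{2}(s-1)^2+O((s-1)^3)$ and $\Psi(s)=\frac{\gamma(\gamma-1)}{2}(s-1)^2+O((s-1)^3)$, hence $\Psi(s)^{(\gamma+1)/\gamma}\sim c'(s-1)^{2(\gamma+1)/\gamma}$. Since $2(\gamma+1)/\gamma>2$, the denominator vanishes faster than the numerator, so $R(s)\to+\infty$ as $s\to1$ — no problem there; in fact $R$ is bounded below near $s=1$ by continuity away from, and blow-up at, the singular point. (b) As $s\to\infty$: $\Phi(s)\sim s^{\gamma+1}$ and $\Psi(s)\sim s^{\gamma}$, so $\Psi^{(\gamma+1)/\gamma}\sim s^{\gamma+1}$, giving $R(s)\to1$. (c) At $s=0$: $\Phi(0)=\gamma>0$ and $\Psi(0)=\gamma-1>0$, so $R(0)=\gamma/(\gamma-1)^{(\gamma+1)/\gamma}>0$. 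On the remaining compact set $s\in[0,L]\setminus(1-\delta,1+\delta)$ the function $R$ is continuous and strictly positive (since $\Phi>0$ there), hence bounded below; combining with the limits at $1$ and at $\infty$ gives a uniform positive lower bound $c$.

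\textbf{Main obstacle.} The routine part is the asymptotics; the delicate point is making the compactness argument fully rigorous at the junction between "near $s=1$" and "away from $s=1$", i.e.\ verifying that $R$ does not dip to zero anywhere on $(0,\infty)$ — equivalently that $\Phi$ and $\Psi$ have matching zero sets with matching multiplicities and nothing pathological happens in between. Because both $\Phi$ and $\Psi$ are strictly positive on $(0,\infty)\setminus\{1\}$ and $R$ has strictly positive limits (or $+\infty$) at the three boundary points $0,1,\infty$, a standard extreme-value argument closes this, but I would want to state it carefully, perhaps by exhibiting the explicit bound $c=\min\{1,\ \gamma(\gamma-1)^{-(\gamma+1)/\gamma},\ \inf_{[\delta_1,\delta_2]}R\}$ after fixing a neighbourhood of $1$ on which $R\ge1$. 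An alternative, cleaner route avoiding the ratio entirely is to prove the pointwise inequality $\Phi(s)\ge c\,\Psi(s)^{(\gamma+1)/\gamma}$ by comparing derivatives: one can try to show $\Phi(s)^{\gamma}\ge c^\gamma\,\Psi(s)^{\gamma+1}$ via the substitution and a convexity/majorization estimate such as $\Psi(s)\le \text{const}\cdot\Phi(s)^{\gamma/(\gamma+1)}$ coming from Hölder applied to the integral representations $\Phi(s)=\int_1^s(s-r)\,(r^{\gamma+1})''\,dr$ and $\Psi(s)=\int_1^s(s-r)\,(r^\gamma)''\,dr$ — but I expect the ratio-compactness argument above to be the shortest path to write down.
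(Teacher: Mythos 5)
Your proposal is correct, and it follows the same skeleton as the paper: dispose of $\bar\rho=0$, use homogeneity to reduce to the one-variable inequality $\Phi(s)\ge c\,\Psi(s)^{\frac{\gamma+1}{\gamma}}$ with $s=\rho/\bar\rho\in[0,\infty)$, and then extract a uniform constant from the behaviour at the three critical places $s\to1$, $s\to0^+$, $s\to\infty$ together with continuity and compactness in between. The genuine difference is where the limit analysis is applied. You bound the ratio $R=\Phi/\Psi^{(\gamma+1)/\gamma}$ directly, using $R\to+\infty$ at $s=1$ (since $2(\gamma+1)/\gamma>2$), $R\to1$ at infinity, and $R(0)=\gamma(\gamma-1)^{-(\gamma+1)/\gamma}$; the paper instead fixes $c$, sets $f(x)=\Phi(x)-c\,\Psi(x)^{(\gamma+1)/\gamma}$, and proves $f\ge f(1)=0$ by showing $f'\le0$ on $(0,1)$ and $f'\ge0$ on $(1,\infty)$, which it reduces to a positive lower bound for the auxiliary ratio $h(x)=(x^\gamma-1)\big/\bigl\{[x^\gamma-1-\gamma(x-1)]^{1/\gamma}(x^{\gamma-1}-1)\bigr\}$ obtained from exactly the same three limits plus an extreme-value argument on the intermediate compact sets. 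Your route is the more direct of the two: once one notes that $\inf_{s\ne1}R(s)>0$ is literally the assertion to prove (the point $s=1$ being trivial since both sides vanish there), the monotonicity detour is unnecessary, and you get a constant of the same explicit form, e.g.\ $c=\min\bigl\{\tfrac12,\ \gamma(\gamma-1)^{-(\gamma+1)/\gamma},\ \min_{[0,1-\delta]\cup[1+\delta,L]}R\bigr\}$. The only step you flag as delicate — the junction between the neighbourhood of $1$ and the compact remainder — is indeed closed by the standard extreme-value theorem exactly as you describe, since $\Psi>0$ off $s=1$ makes $R$ continuous there; nothing in your outline would fail.
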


\begin{proof}
When $\bar{\rho}=0,$ \eqref{gam1} is true for any $0<c\le 1$. For $\rho=0$,
\eqref{gam1} becomes
$
\gamma\bar{\rho}^{\gamma+1}\ge c(\gamma-1)^{\frac{\gamma+1}{\gamma}}\bar{\rho}^{\gamma+1},
$
and it is equivalent to proving that
$
\displaystyle\frac{\gamma}{(\gamma-1)^{\frac{\gamma+1}{\gamma}}},
$
has a positive lower bound, which is guaranteed by $\gamma<+\infty$.

Now, assume $\bar{\rho}\neq0, \rho\neq0.$ Then $\bar{\rho}>0$.
Notice that \eqref{gam1} is equivalent to
$$
\left(\frac{\rho}{\bar{\rho}}\right)^{\gamma+1}-1-(\gamma+1)\left(\frac{\rho}{\bar{\rho}}-1\right)\ge c\left[\left(\frac{\rho}{\bar{\rho}}\right)^{\gamma}-1-\gamma\left(\frac{\rho}{\bar{\rho}}-1\right)\right]^{\frac{\gamma+1}{\gamma}}.
$$
Set $\frac{\rho}{\bar{\rho}}=x$ and
$$
f(x)=x^{\gamma+1}-1-(\gamma+1)(x-1)-c\left[x^{\gamma}-1-\gamma(x-1)\right]^{\frac{\gamma+1}{\gamma}};
$$
it is obvious that $f(1)=0$ and
\begin{align*}
\frac{f'(x)}{\gamma+1}&=x^{\gamma}-1-\frac{c}{\gamma}\left[x^{\gamma}-1-\gamma(x-1)\right]^{\frac{1}{\gamma}}
\left(\gamma x^{\gamma-1}-\gamma\right)\\
&=x^{\gamma}-1-c\left[x^{\gamma}-1-\gamma(x-1)\right]^{\frac{1}{\gamma}}\left( x^{\gamma-1}-1\right).
\end{align*}

Next, we claim that there exists $0<c\le 1$, such that $f'(x)\le0, \forall x\in (0,1)$, and $f'(x)\ge0, \forall x\in (1,+\infty)$; that is,
\begin{align*}
f(x)\ge f(1)=0,\quad \forall x\in[0,+\infty).
\end{align*}
 To this end, we set
$$
h(x)=\frac{x^{\gamma}-1}{\left[x^{\gamma}-1-\gamma(x-1)\right]^{\frac{1}{\gamma}}\left( x^{\gamma-1}-1\right)},
$$
and distinguish two cases.

\vspace{4pt}
\noindent\textbf{$Case~ I: ~x\in (1,+\infty).$}

Observing
$
\left[(1+t)^{k}-1\right]\sim k t~(t\rightarrow 0),
$
where $k>0$ is a constant, we find
\begin{align*}
\lim_{x\rightarrow 1^{+}}\frac{x^{\gamma}-1}{\left[x^{\gamma}-1-\gamma(x-1)\right]^{\frac{1}{\gamma}}\left( x^{\gamma-1}-1\right)}=\lim_{x\rightarrow 1^{+}}\frac{\gamma x}{\left[x^{\gamma}-1-\gamma(x-1)\right]^{\frac{1}{\gamma}}(\gamma-1) x}=+\infty,
\end{align*}
due to
$$
\left[x^{\gamma}-1-\gamma(x-1)\right]^{\frac{1}{\gamma}}\rightarrow 0, \quad \mbox{as}~\,x\rightarrow 1^{+}.
$$
In addition,
\begin{align*}
\lim_{x\rightarrow +\infty}h(x)=\lim_{x\rightarrow +\infty}\frac{1-\frac{1}{x^{\gamma}}}{\left[1-\frac{1}{x^{\gamma}}-\gamma\left (\frac{1}{x^{\gamma-1}}-\frac{1}{x^{\gamma}}\right)\right]^{\frac{1}
{\gamma}}{\left( 1-\frac{1}{x^{\gamma-1}}\right)}}=1.
\end{align*}
Hence, there exist $0<\delta<1$ and $X>2$, such that when $x\in(1,1+\delta)$ or $x>X$,
$
h(x)\ge\frac{1}{2}.
$
When $x\in[1+\delta, X]$,
$
\min_{x\in[1+\delta, X]}h(x)=h(x_0)>0,
$
since $h(x)$ is a continuous function.

In conclusion, for any $x\in(1,+\infty)$,
$
h(x)\ge\min\left\{h(x_0),\frac{1}{2}\right\}.
$

\vspace{4pt}
\noindent\textbf{$Case ~II:~ x\in (0,1).$}

In this case, $x^{\gamma-1}-1<0$,
\begin{align*}
\lim_{x\rightarrow 1^{-}}\frac{x^{\gamma}-1}{[x^{\gamma}-1-\gamma(x-1)]^{\frac{1}{\gamma}}( x^{\gamma-1}-1)}=\lim_{x\rightarrow 1^{-}}\frac{\gamma x}{[x^{\gamma}-1-\gamma(x-1)]^{\frac{1}{\gamma}}(\gamma-1) x}=+\infty,
\end{align*}
and
\begin{align*}
\lim_{x\rightarrow 0^+}\frac{x^{\gamma}-1}{[x^{\gamma}-1-\gamma(x-1)]^{\frac{1}{\gamma}}( x^{\gamma-1}-1)}=(\gamma-1)^{-\frac{1}{\gamma}}.
\end{align*}
Hence, there exists $0<\delta_1<\frac{1}{4}$, such that when $x\in(1-\delta_1, 1)$ or $x\in(0, \delta_1)$,
$$
h(x)\ge\frac{1}{2}(\gamma-1)^{-\frac{1}{\gamma}}.
$$
If $x\in[\delta_1, 1-\delta_1]$, we have, similarly as in $Case~ I$,
$$
\min_{x\in[\delta_1, 1-\delta_1]}h(x)=h(x_1)>0,
$$
and
$$
h(x)\ge\min\left\{h(x_1),\frac{1}{2}(\gamma-1)^{-\frac{1}{\gamma}}\right\}.
$$

Consequently, we deduce
$$
h(x)\ge\min\left\{h(x_0),h(x_1), \frac{1}{2},\frac{1}{2}(\gamma-1)^{-\frac{1}{\gamma}}\right\}>0.
$$
Hence, noting $
x^{\gamma}-1-\gamma(x-1)>0, \, \forall x\neq1,
$ and letting
$$
c=\frac{1}{2}\min\left\{\frac{\gamma}{(\gamma-1)^{\frac{\gamma+1}{\gamma}}},h(x_0),h(x_1), \frac{1}{2},\frac{1}{2}(\gamma-1)^{-\frac{1}{\gamma}}\right\}
$$
justifies the claim
and so completes the proof.
\end{proof}

Next, we recall some important inequalities giving sharp information on the pressure near vacuum, and an invariant region result for $L^{\infty}$ weak entropy solution to system \eqref{1.1}-\eqref{r0u0}.

 \begin{lemma}[\cite{HUA0, HUA}]\label{lem2}
Assume $0 \leq \rho, \bar{\rho} \leq C$ and $\gamma>1$, then there are two constants $d_1>0$ and $d_2>0$ such that
$$
\left\{\begin{aligned}
\left(\rho^\gamma-\bar{\rho}^\gamma\right)(\rho-\bar{\rho})&\ge |\rho-\bar{\rho}|^{\gamma+1},\\
d_1\left(\rho^{\gamma-1}+\bar{\rho}^{\gamma-1}\right)(\rho-\bar{\rho})^2 & \leqq \rho^{\gamma+1}-\bar{\rho}^{\gamma+1}-(\gamma+1) \bar{\rho}^\gamma(\rho-\bar{\rho}) \\
& \leqq d_2\left(\rho^{\gamma-1}+\bar{\rho}^{\gamma-1}\right)(\rho-\bar{\rho})^2, \\
d_1\left(\rho^{\gamma-1}+\bar{\rho}^{\gamma-1}\right)(\rho-\bar{\rho})^2 & \leqq\left(\rho^\gamma-\bar{\rho}^\gamma\right)(\rho-\bar{\rho}) \\
& \leqq d_2\left(\rho^{\gamma-1}+\bar{\rho}^{\gamma-1}\right)(\rho-\bar{\rho})^2 .
\end{aligned}\right.
$$
\end{lemma}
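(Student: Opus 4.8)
The plan is to reduce everything to one-variable inequalities by homogeneity, exactly as in the proof of Lemma \ref{lem1}. Since all three asserted inequalities are homogeneous of degree $\gamma+1$ in $(\rho,\bar\rho)$, I would first dispose of the degenerate cases $\bar\rho=0$ and $\rho=0$ directly (for $\bar\rho=0$ the first inequality is an equality and the others become $d_1\rho^{\gamma+1}\le\rho^{\gamma+1}\le d_2\rho^{\gamma+1}$; for $\rho=0$ they read $\bar\rho^{\gamma+1}\le\bar\rho^{\gamma+1}$, $d_1\bar\rho^{\gamma+1}\le\gamma\bar\rho^{\gamma+1}\le d_2\bar\rho^{\gamma+1}$, etc.), and then assume $\bar\rho>0$, $\rho>0$. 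Dividing through by $\bar\rho^{\gamma+1}$ and writing $x=\rho/\bar\rho\in(0,\infty)$, each claim becomes a statement comparing the functions
\[
P(x)=x^{\gamma+1}-1-(\gamma+1)(x-1),\quad Q(x)=(x^\gamma-1)(x-1),\quad R(x)=(x^{\gamma-1}+1)(x-1)^2
\]
up to positive constants, uniformly on $(0,\infty)$.

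The core step is then to show that $P/R$ and $Q/R$ are bounded above and below by positive constants on $(0,\infty)$, and that $Q\ge|x-1|^{\gamma+1}$ (equivalently $x^\gamma-1\ge\mathrm{sgn}(x-1)|x-1|^\gamma$, which follows from convexity/monotonicity arguments on $t^\gamma$). For the ratio bounds I would argue exactly as in Lemma \ref{lem1}: the functions $P/R$ and $Q/R$ are continuous and strictly positive on $(0,\infty)\setminus\{1\}$ (note $P(x)>0$ for $x\ne1$ by strict convexity of $x^{\gamma+1}$, $Q(x)>0$ for $x\ne1$, and $R(x)>0$ for $x\ne1$), so it suffices to control their behavior at the three critical points $x\to1$, $x\to0^+$, $x\to+\infty$. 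Using $(1+t)^k-1\sim kt$ as $t\to0$, a second-order expansion at $x=1$ gives $P(x)\sim\tfrac{\gamma(\gamma+1)}{2}(x-1)^2$, $Q(x)\sim\gamma(x-1)^2$, $R(x)\sim2(x-1)^2$, so all three ratios have finite positive limits at $x=1$. As $x\to+\infty$, $P,Q\sim x^{\gamma+1}$ and $R\sim x^{\gamma+1}$, again giving finite positive limits. As $x\to0^+$, $P(x)\to\gamma$, $Q(x)\to1$, $R(x)\to1$, so the limits are finite and positive. Having finite positive limits at all three endpoints together with continuity and positivity on the compact intermediate intervals yields uniform two-sided bounds, and taking $d_1$ (resp. $d_2$) to be the minimum (resp. maximum) of the relevant infima and suprima — intersected with the constant $1$ for the first inequality — finishes the argument.

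The main obstacle, as in Lemma \ref{lem1}, is verifying that the one-variable ratios do not degenerate to $0$ or blow up at the interior point $x=1$; this is handled cleanly by the $(1+t)^k-1\sim kt$ asymptotics, which show that numerator and denominator vanish to the same quadratic order there, so there is really no genuine difficulty once the problem is set up this way. Since this lemma is quoted from \cite{HUA0,HUA}, I would in fact simply cite those references for the detailed computation and only sketch the homogeneity reduction above, noting that the first inequality $(\rho^\gamma-\bar\rho^\gamma)(\rho-\bar\rho)\ge|\rho-\bar\rho|^{\gamma+1}$ is elementary (it is the statement that $(a^\gamma-b^\gamma)(a-b)\ge|a-b|^{\gamma+1}$ for $a,b\ge0$, proved by assuming $a\ge b$ and using $a^\gamma-b^\gamma\ge(a-b)^\gamma$) and that the middle pair of inequalities expresses the near-vacuum behavior of the pressure, with the two-sided bound reflecting that the Hessian of $\rho\mapsto\rho^{\gamma+1}$ relative to the reference state $\bar\rho$ is comparable to $\rho^{\gamma-1}+\bar\rho^{\gamma-1}$.
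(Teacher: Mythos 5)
The paper gives no proof of this lemma at all -- it is imported verbatim from \cite{HUA0,HUA} -- so there is nothing internal to compare against, and your sketch stands on its own as a correct argument. The homogeneity reduction (all three expressions are homogeneous of degree $\gamma+1$ in $(\rho,\bar\rho)$) to the one-variable functions $P$, $Q$, $R$, the direct check of the degenerate cases $\rho=0$ and $\bar\rho=0$, the superadditivity inequality $(a+b)^\gamma\ge a^\gamma+b^\gamma$ for the first estimate, and the limit computations at $x\to1$, $x\to0^+$, $x\to+\infty$ combined with continuity and strict positivity of $P/R$ and $Q/R$ on intermediate compact intervals do produce uniform two-sided bounds; this is exactly parallel to the compactness argument the authors themselves use for Lemma \ref{lem1}, so the route is consistent with the paper's methods even though the paper only cites the result. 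One small observation your argument makes apparent: by homogeneity the constants $d_1,d_2$ depend only on $\gamma$, so the hypothesis $\rho,\bar\rho\le C$ is not actually needed for this particular lemma (it is harmless, and is carried along because the solutions in question are $L^\infty$-bounded by Lemma \ref{thm2}).
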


\begin{lemma}[\cite{HOU1}]\label{thm2}
Suppose that $\left(\rho_0, u_0\right)(x) \in L^{\infty}(\mathbf{R})$ satisfies
$$
0 \leq \rho_0(x) \leq C, \quad\left|m_0(x)\right| \leq C \rho_0(x) .
$$
Let $(\rho, u) \in L^{\infty}(\mathbf{R} \times[0, T])$ be an $L^{\infty}$ weak entropy solution of the system \eqref{1.1}-\eqref{r0u0} with $\gamma>1$. Then $(\rho, m)$ satisfies
$$
0 \leq \rho(x, t) \leq C, \quad|m(x, t)| \leq C \rho(x, t),
$$
where the constant $C$ depends solely on the initial data.
\end{lemma}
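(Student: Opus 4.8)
\emph{Sketch of proof.} Since the statement is an invariant-region property, the plan is to produce a bounded convex region of state space that contains the initial data, that the solution cannot escape, and membership in which yields the two claimed bounds. The normalization $\kappa=\frac{(\gamma-1)^{2}}{4\gamma}$ makes the sound speed $c(\rho)=\sqrt{p'(\rho)}=\theta\rho^{\theta}$ with $\theta=\frac{\gamma-1}{2}$, so the Riemann invariants of \eqref{1.1} are $w=u+\rho^{\theta}$ and $z=u-\rho^{\theta}$, where $u=m/\rho$. Put
$$
\Sigma_{a}:=\bigl\{(\rho,m):\ \rho\ge0,\ |u|+\rho^{\theta}\le a\bigr\}=\{\rho\ge0\}\cap\{w\le a\}\cap\{z\ge-a\},\qquad a:=C+C^{\theta}.
$$
The hypotheses $0\le\rho_{0}\le C$, $|m_{0}|\le C\rho_{0}$ force $(\rho_{0},m_{0})\in\Sigma_{a}$, while on $\Sigma_{a}$ one has $\rho^{\theta}=\tfrac{w-z}{2}\le a$ and $|u|=\bigl|\tfrac{w+z}{2}\bigr|\le a$, hence $0\le\rho\le a^{1/\theta}$ and $|m|\le a\rho$, i.e.\ exactly the asserted estimates with a constant depending only on the initial $C$. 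So it suffices to prove $(\rho,m)(\cdot,t)\in\Sigma_{a}$ a.e.

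First I would record why $\Sigma_{a}$ is invariant for classical solutions. In the $(\rho,m)$-plane $\Sigma_{a}$ is convex and bounded: $\{w\le a\}=\{m\le a\rho-\rho^{1+\theta}\}$ is the subgraph of a concave function of $\rho\ge0$, $\{z\ge-a\}$ the supergraph of its convex negative, and the inequalities above bound $\Sigma_{a}$; its boundary consists of arcs of the Riemann-invariant level curves $\{w=a\}$ and $\{z=-a\}$, touching the vacuum at $(0,0)$. For the homogeneous flux this pair of level curves meets the Chueh--Conley--Smoller criterion, being the standard invariant region of isentropic gas dynamics (see, e.g., \cite{SMO}). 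For the damping source $S=(0,-\alpha m)^{t}$ one computes $\nabla_{(\rho,m)}w\cdot S=\nabla_{(\rho,m)}z\cdot S=-\alpha m/\rho=-\alpha u$; and on $\{w=a\}\cap\partial\Sigma_{a}$ one has $u=a-\rho^{\theta}\ge0$, so $\nabla w\cdot S\le0$, while on $\{z=-a\}\cap\partial\Sigma_{a}$, $u=\rho^{\theta}-a\le0$, so $\nabla z\cdot S\ge0$: the source also points (weakly) into $\Sigma_{a}$.

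The main work is to upgrade this to an arbitrary $L^{\infty}$ entropy solution, using the entropy inequality $\eta_{t}+q_{x}+\alpha\eta_{m}m\le0$. Choose $g(\xi)=\bigl((|\xi|-a)_{+}\bigr)^{4}$ --- even, convex, of class $C^{3}$, vanishing exactly on $[-a,a]$, with $g'$ odd and non-decreasing --- and let $(\eta,q)=(\eta_{g},q_{g})$ be the weak entropy pair given by \eqref{etaq}. I expect to verify: (i) $\eta_{g}\ge0$, with $\eta_{g}(\rho,m)=0$ iff $(\rho,m)\in\Sigma_{a}$, since $\chi(\cdot;\rho,u)$ is supported on $[u-\rho^{\theta},u+\rho^{\theta}]=[z,w]$, which lies in $[-a,a]$ precisely when $(\rho,m)\in\Sigma_{a}$; (ii) $\eta_{g}$ is convex because $g$ is (\cite{LIO}); (iii) $\eta_{g,m}=\int_{-1}^{1}g'(u+s\rho^{\theta})(1-s^{2})^{\lambda}\,ds$ has the same sign as $u$ --- symmetrizing in $s$ and using that $g'$ is odd and non-decreasing gives $g'(u+\sigma)+g'(u-\sigma)\ge0$ for $u\ge0$ --- so that $\alpha\,\eta_{g,m}\,m=\alpha\rho u\,\eta_{g,m}\ge0$; (iv) $|q_{g}|\le\Lambda\,\eta_{g}$ pointwise, where $\Lambda$ bounds the characteristic speeds $u\pm\theta\rho^{\theta}$ (finite by the a priori $L^{\infty}$ bound on $(\rho,u)$), since $|u+\theta s\rho^{\theta}|\le\Lambda$ inside the integral defining $q_{g}$. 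Given these, $\eta_{g}(\rho_{0},m_{0})\equiv0$, and by (iii) the entropy inequality yields $\partial_{t}\eta_{g}+\partial_{x}q_{g}\le0$ in the sense of distributions. Since only $\rho_{0}\in L^{\infty}$ is available here (no decay in $x$), I would argue locally: for each $(x_{0},t_{0})$ integrate $\partial_{t}\eta_{g}+\partial_{x}q_{g}\le0$ over the truncated backward cone $\{|x-x_{0}|\le\Lambda(t_{0}-t),\ 0\le t\le t_{0}-\varepsilon\}$; its base contributes $0$, and by $|q_{g}|\le\Lambda\eta_{g}$ its two lateral faces contribute $\ge0$, so $\int_{|x-x_{0}|\le\Lambda\varepsilon}\eta_{g}(x,t_{0}-\varepsilon)\,dx\le0$, forcing $\eta_{g}=0$ a.e. As the data lie in $\Sigma_{a}$ everywhere, this holds at every $(x_{0},t_{0})$, so $(\rho,m)(x,t)\in\Sigma_{a}$ a.e.\ on $\mathbf{R}\times[0,T]$, which is the lemma.

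The main obstacle I anticipate is not any single estimate but assembling one weak entropy that simultaneously vanishes exactly on $\Sigma_{a}$, is convex, and produces a source term $\alpha\eta_{m}m$ of a definite favorable sign; the evenness and monotone-derivative structure of $g$ is precisely what reconciles (i)--(iii), and one must also handle the vacuum degeneracy (where $u=m/\rho$ is singular) and replace global energy integration by the finite-propagation-speed cone argument. As an alternative valid for the $L^{\infty}$ solutions actually constructed by vanishing viscosity or compensated compactness, the parabolic maximum principle applied to the convex region $\Sigma_{a}$ gives the bound for the approximate solutions directly, and it survives the limit since $\Sigma_{a}$ is closed and convex.
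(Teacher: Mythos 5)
The paper does not actually prove this lemma: it is quoted from \cite{HOU1}, so there is no internal proof to compare against. Your sketch follows the standard route used in the cited literature (cf. \cite{HUA0,HUA1}): a convex weak entropy generated by a convex $g$ that vanishes exactly on the Riemann-invariant region $\Sigma_a$, positivity of the damping contribution $\alpha\eta_m m$, and a finite-propagation-speed (cone) argument. Items (i), (ii), (iv) are correct as stated: $\eta_g\ge 0$ with equality precisely on $\Sigma_a$ because $\chi(\cdot;\rho,u)$ is supported on $[u-\rho^{\theta},u+\rho^{\theta}]$; convexity is Lemma \ref{lem61}; and $|q_g|\le\Lambda\eta_g$ with $\Lambda$ the characteristic-speed bound supplied by the assumed $L^{\infty}$ bound on $(\rho,u)$. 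Your sign argument in (iii) (oddness and monotonicity of $g'$ give $g'(u+\sigma)+g'(u-\sigma)$ the sign of $u$) is also right. A minor point: $g(\xi)=((|\xi|-a)_+)^4$ is only $C^3$, so one should approximate by smooth convex functions to stay within the entropy class \eqref{etaq} and the admissibility required in the definition of entropy solution; this is routine.

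The genuine gap is at the base of the cone. The entropy inequality in the paper's definition of entropy solution is asserted only in $\mathcal{D}'(\mathbf{R}\times\mathbf{R}_+)$, i.e. for test functions supported in $t>0$, while the initial data enter the definition only through the weak form of the two conservation laws. Hence in your cone integration the base must sit at $t=\tau>0$, and you need $\limsup_{\tau\to 0^{+}}\int_{|x-x_0|\le\Lambda(t_0-\tau)}\eta_g(\rho,m)(x,\tau)\,dx=0$. But the weak formulation only yields $(\rho,m)(\cdot,\tau)\rightharpoonup(\rho_0,m_0)$ in the distributional sense, and since $\eta_g$ is convex this gives $\liminf_{\tau\to0^+}\int\eta_g(\tau)\ge\int\eta_g(\rho_0,m_0)=0$ --- an inequality in the useless direction; nothing in your argument rules out $\int\eta_g(\cdot,\tau)$ staying bounded away from $0$ as $\tau\to0^+$. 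To close this one needs either (a) the entropy inequality formulated up to $t=0$, i.e. $\iint(\eta\phi_t+q\phi_x-\alpha\eta_m m\,\phi)\,dx\,dt+\int\eta(\rho_0,m_0)\,\phi(x,0)\,dx\ge 0$ for $\phi\ge0$, which is how this invariant-region lemma is set up in the sources, or (b) the normal-trace theory for divergence-measure fields \cite{CHE} applied to $(\eta_g,q_g)$ together with an identification of the $t=0$ trace with $\eta_g(\rho_0,m_0)$, which again rests on a strengthened formulation of the initial condition; in either case the Gauss--Green step on the truncated cone should be carried out with Lipschitz cut-offs, since $\eta_{g,t}+q_{g,x}$ is only a nonpositive measure. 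Finally, your closing alternative (maximum principle for viscous approximations) establishes the bound only for solutions obtained as such limits, not for an arbitrary $L^{\infty}$ entropy solution, as you yourself note.
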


Suppose that $(\rho ,m)$ (with $m=\rho u$) be a weak entropy solution of system \eqref{1.1}-\eqref{r0u0} satisfying the conditions in Theorem 1.1, and that $\bar{\rho}$ is the Barenblatt solution of the porous medium equation (with the same total mass $M$ as $\rho$), $\bar{m}=-\left(\bar{\rho}^\gamma\right)_x$, and $\bar{R}=\bar{m}_t+\left(\frac{\bar{m}^2}{\bar{\rho}}\right)_x$. Then, it follows that
\begin{eqnarray}\label{4.2}
\left\{\begin{array}{l}
\rho_t+m_x=0, \\
m_t+\left(\frac{m^2}{\rho}+p(\rho)\right)_x=-\alpha m,
\end{array}\right.
\end{eqnarray}
and
\begin{eqnarray}\label{4.3}
\left\{\begin{array}{l}
\bar{\rho}_t+\bar{m}_x=0, \\
\bar{m}_t+\left(\frac{\bar{m}^2}{\bar{\rho}}+p(\bar{\rho})\right)_x=-\alpha \bar{m}+\bar{R}.
\end{array}\right.
\end{eqnarray}
Writing $w=\rho-\bar{\rho},~z=m-\bar{m}$, we have
\begin{eqnarray}\label{4.4}
\left\{\begin{array}{l}
w_t+z_x=0, \\
z_t+\left(\frac{m^2}{\rho}\right)_x+\left(p(\rho)-p(\bar{\rho})\right)_x+\alpha z=-\bar{m}_t .
\end{array}\right.
\end{eqnarray}
Set $y=-\int_{-\infty}^x w(r, t) \mathrm{d} r$; one has $y_x=-w,~ z=y_t.$
Hence, the equation \eqref{4.4} turns into a nonlinear wave equation with source terms, degenerate at vacuum:
$$
y_{tt}+\left(\frac{m^2}{\rho}\right)_x+\left(p(\rho)-p(\bar{\rho})\right)_x+\alpha y_t=-\bar{m}_t .
$$
The following is
an a priori estimate for the above equation.

\begin{lemma}[\cite{HUA}]\label{lem3}
Under the conditions of Theorem 1.1, for any $t>0$, it holds that
$$
\begin{gathered}
\int_{-\infty}^{+\infty}\left(y^2+y_t^2+\frac{m^2}{\rho}+p(\rho)\right) d x+\int_0^t \int_{-\infty}^{+\infty}\left(y_t^2+\frac{m^2}{\rho}\right) d x d \tau \\
+\int_0^t \int_{-\infty}^{+\infty}(\rho^\gamma-\bar{\rho}^{\gamma})(\rho-\bar{\rho}) d x d \tau \leq C .
\end{gathered}
$$
\end{lemma}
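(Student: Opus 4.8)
The plan is to derive this a priori estimate by the entropy dissipation method, testing the system against a suitable entropy and integrating in space and time. First I would introduce the total energy-type quantity built from the entropy $\eta = \frac{m^2}{2\rho} + \frac{\kappa}{\gamma-1}\rho^\gamma$ and the wave-equation reformulation in terms of $y$. The key observation is that $y$ itself carries the same $L^2$ information as the antiderivative of $w = \rho-\bar\rho$, and multiplying the wave equation $y_{tt} + (m^2/\rho)_x + (p(\rho)-p(\bar\rho))_x + \alpha y_t = -\bar m_t$ by $y$ and by $y_t$ and integrating over $\mathbf{R}\times[0,t]$ will produce, after integration by parts, the terms $\int y^2$, $\int y_t^2$, and — crucially — the dissipation term $\int_0^t\!\int (\rho^\gamma - \bar\rho^\gamma)(\rho-\bar\rho)\,dx\,d\tau$, since $(p(\rho)-p(\bar\rho))_x$ paired with $y_x = -w$ yields $\kappa(\rho^\gamma-\bar\rho^\gamma)(\rho-\bar\rho)$ up to sign. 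Simultaneously, the relative entropy inequality $\eta_* $ integrated against the appropriate weight controls $\int(\frac{m^2}{\rho} + p(\rho))\,dx$ and its space-time integral $\int_0^t\!\int(y_t^2 + \frac{m^2}{\rho})\,dx\,d\tau$, with $z = y_t$.

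The key steps, in order: (1) Justify the reformulation \eqref{4.4} and that $y\in L^2$ for $t>0$ given $y(\cdot,0)\in L^2$, using the equal-mass assumption so that $y(\pm\infty,t)=0$. (2) Form the energy functional $E(t) = \int(\frac{1}{2}y_t^2 + \text{(convex function of }\rho,\bar\rho) + \frac{1}{2}\alpha y^2 + y\,y_t\cdot\text{(small weight)})\,dx$, choosing the combination so that its time derivative is manifestly dissipative; the cross term $y\,y_t$ is needed to extract decay of $\int y^2$ but here we only need boundedness, so a simpler combination suffices. (3) Handle the source term $-\bar m_t$: since $\bar m = -(\bar\rho^\gamma)_x$ and $\bar\rho$ is the explicit Barenblatt profile \eqref{barrho}, one estimates $\|\bar m_t(\cdot,\tau)\|$ and its pairing with $y_t$ and $y$ by Cauchy–Schwarz, absorbing part into the dissipation and leaving a term integrable in $\tau$ (the decay rates of the Barenblatt derivatives make $\int_0^\infty \|\bar m_t\|\,d\tau$ or the relevant weighted version finite, or at worst Gronwall-controllable). (4) Combine the $y$-multiplier and $y_t$-multiplier identities with the relative entropy inequality to close the estimate and obtain the stated bound with $C$ independent of $t$.

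I expect the main obstacle to be the treatment of the source term $\bar m_t$ and the boundary/regularity issues at the vacuum interface. The Barenblatt solution has a derivative discontinuity across the free boundary $|x| = \sqrt{A_0/B_0}(1+t)^{1/(\gamma+1)}$, so $\bar R = \bar m_t + (\bar m^2/\bar\rho)_x$ is only a measure-type or $L^1_{loc}$ object near the interface, and all integrations by parts must be justified either by a cutoff/approximation argument or by exploiting that $\bar\rho$ vanishes there. Controlling $\int_0^t\!\int (\text{source})\cdot y\,dx\,d\tau$ uniformly in $t$ requires the precise time-decay of $\|\bar\rho^\gamma\|$-type quantities from \eqref{barrho}, together with a Hardy-type or interpolation inequality to bound $\int y\cdot(\text{source})$ by $\varepsilon\int(\frac{m^2}{\rho}+\text{dissipation}) + (\text{integrable in }\tau)$. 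Since this lemma is quoted from \cite{HUA}, I would present the energy-identity skeleton and the source-term estimate in detail and refer to \cite{HUA} for the remaining routine verifications at the vacuum boundary.
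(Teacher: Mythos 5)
The paper does not prove this lemma itself — it is quoted directly from \cite{HUA} — and your sketch reproduces the same strategy used there: the entropy inequality with the mechanical-energy entropy to bound $\int(\tfrac{m^2}{\rho}+p(\rho))\,dx$ and $\int_0^t\!\int\tfrac{m^2}{\rho}\,dx\,d\tau$, combined with multiplying the degenerate wave equation for $y$ by $y$ (so that the pressure difference paired with $y_x=-(\rho-\bar\rho)$ produces the dissipation $\int_0^t\!\int(\rho^\gamma-\bar\rho^\gamma)(\rho-\bar\rho)$), with the source $\bar m_t$ absorbed via the explicit decay of the Barenblatt profile. So the proposal is correct in approach and essentially the same as the cited proof.
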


We now make estimates of $\bar{\rho}^{\beta_1}\bar{u}^{\beta_2}$ and $\bar{\rho}^\delta\left(\frac{\bar{R}}{\bar{\rho}}\right)$.
\begin{lemma}\label{lem4}
Let $1\leq p<+\infty$,
\begin{equation}\label{beta1}
\beta_1\ge-\frac{\gamma-1}{p},
\end{equation}
$\beta_2\ge0$, and $\delta>0$. Then, it holds that
\begin{equation}\label{rhobeta}
\left\|\bar{\rho}^{\beta_1}\bar{u}^{\beta_2}\right\|_{L^p}\leq C(1+t)^{-\frac{p(\beta_2\gamma+\beta_1)-1}{p(r+1)}},
\end{equation}
\begin{equation}\label{rhobeta'}
\left\|\bar{\rho}^{\beta_1}\bar{u}^{\beta_2}\right\|_{L^{\infty}}\leq C(1+t)^{-\frac{\beta_2\gamma+\beta_1}{r+1}},
\end{equation}
and
\begin{equation}\label{rhodelta}
\left\|\bar{\rho}^\delta\left(\frac{\bar{R}}{\bar{\rho}}\right)\right\|_{L^p} \leq C(1+t)^{-\frac{p(\delta+2 \gamma+1)-1}{p(\gamma+1)}} .
\end{equation}
\end{lemma}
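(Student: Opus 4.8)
The plan is to substitute the explicit Barenblatt profiles \eqref{barrho}, \eqref{ux} and exploit their self-similar structure, so that every norm reduces to a power of $(1+t)$ times a fixed finite integral over the time-independent support; \eqref{rhodelta} will then be bootstrapped from \eqref{rhobeta}. First, for \eqref{rhobeta} and \eqref{rhobeta'} I would introduce $\xi=(1+t)^{-\frac{1}{\gamma+1}}x$, which turns \eqref{barrho}, \eqref{ux} into
\[
\bar\rho(x,t)=(1+t)^{-\frac{1}{\gamma+1}}G(\xi),\qquad \bar u(x,t)=\frac{(1+t)^{-\frac{\gamma}{\gamma+1}}}{\gamma+1}\,\xi,\qquad G(\xi):=\bigl[(A_0-B_0\xi^2)_+\bigr]^{\frac{1}{\gamma-1}},
\]
with $G$ supported on the fixed interval $I=\{|\xi|\le\sqrt{A_0/B_0}\}$. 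Hence $\bar\rho^{\beta_1}|\bar u|^{\beta_2}=(1+t)^{-\frac{\beta_1+\gamma\beta_2}{\gamma+1}}\Phi(\xi)$ with $\Phi:=(\gamma+1)^{-\beta_2}G^{\beta_1}|\xi|^{\beta_2}$ supported on $I$. Taking the $L^\infty$-norm in $x$ gives \eqref{rhobeta'}, the quantity $\|\Phi\|_{L^\infty(I)}$ being finite once $\beta_1\ge0$, $\beta_2\ge0$. For \eqref{rhobeta}, the change of variables $x=(1+t)^{\frac{1}{\gamma+1}}\xi$ supplies one extra factor $(1+t)^{\frac{1}{\gamma+1}}$ from $dx$ and reduces the claim to $\int_I\Phi^p\,d\xi<\infty$; the only possible blow-up is at the free boundary $|\xi|=\sqrt{A_0/B_0}$, where $A_0-B_0\xi^2$ vanishes linearly and $\Phi^p\sim(\mathrm{dist})^{p\beta_1/(\gamma-1)}$, which is integrable precisely under the hypothesis \eqref{beta1} (with the borderline read strictly, as in all our applications). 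Collecting the powers of $(1+t)$ produces exactly the exponents stated.

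Next, for \eqref{rhodelta} I would first rewrite $\bar R$. Since $\bar m=\bar\rho\bar u=-(\bar\rho^\gamma)_x$ and $\kappa=\alpha$, Darcy's law gives $p(\bar\rho)_x=\kappa(\bar\rho^\gamma)_x=-\alpha\bar m$, so by \eqref{4.3} one has $\bar R=\bar m_t+(\bar m^2/\bar\rho)_x$. Writing $\bar m_t=\bar\rho_t\bar u+\bar\rho\bar u_t=-\bar m_x\bar u+\bar\rho\bar u_t$ (using $\bar\rho_t+\bar m_x=0$) and $(\bar m^2/\bar\rho)_x=(\bar m\bar u)_x=\bar m_x\bar u+\bar m\bar u_x$, the two $\bar m_x\bar u$ terms cancel and we obtain
\[
\bar R=\bar\rho\,(\bar u_t+\bar u\,\bar u_x),
\]
a form in which the discontinuous derivative of $\bar\rho$ at the interface never appears. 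Substituting $\bar u=\frac{x}{(\gamma+1)(1+t)}$ from \eqref{ux}, a one-line computation yields, on the support of $\bar\rho$,
\[
\frac{\bar R}{\bar\rho}=\bar u_t+\bar u\bar u_x=-\frac{\gamma\,x}{(\gamma+1)^2(1+t)^2}=-\frac{\gamma}{(\gamma+1)(1+t)}\,\bar u.
\]
Therefore $\bar\rho^{\delta}(\bar R/\bar\rho)=-\frac{\gamma}{(\gamma+1)(1+t)}\,\bar\rho^{\delta}\bar u$, so $\|\bar\rho^{\delta}(\bar R/\bar\rho)\|_{L^p}=\frac{\gamma}{\gamma+1}(1+t)^{-1}\|\bar\rho^{\delta}\bar u\|_{L^p}$; applying \eqref{rhobeta} with $\beta_1=\delta>0$ and $\beta_2=1$, and using the identity $-1-\frac{p(\gamma+\delta)-1}{p(\gamma+1)}=-\frac{p(\delta+2\gamma+1)-1}{p(\gamma+1)}$, gives \eqref{rhodelta}.

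The computations here are essentially bookkeeping; the two points that need care are the derivation of the identity $\bar R=\bar\rho(\bar u_t+\bar u\bar u_x)$, arranged so that no derivative of $\bar\rho$ lands on the vacuum interface, and the integrability of $\Phi^p$ at the free boundary $|\xi|=\sqrt{A_0/B_0}$, which is exactly the role played by condition \eqref{beta1}.
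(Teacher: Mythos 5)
Your proof is correct and follows essentially the same route as the paper: the same self-similar substitution $\xi=(1+t)^{-\frac{1}{\gamma+1}}x$ reduces each norm to a time-power times a fixed integral over the Barenblatt support (with \eqref{beta1} controlling integrability at the free boundary), and the same cancellation $\bar R=\bar\rho(\bar u_t+\bar u\bar u_x)$ removes any derivative of $\bar\rho$ at the interface. Your only deviation—deducing \eqref{rhodelta} from \eqref{rhobeta} via the identity $\bar R/\bar\rho=-\tfrac{\gamma}{(\gamma+1)(1+t)}\bar u$ instead of redoing the scaled integral—is a minor tidying of the same computation, and your caveats (the $L^\infty$ bound really requires $\beta_1\ge0$, and the borderline of \eqref{beta1} should be read strictly) are consistent with how the lemma is actually applied in the paper.
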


\begin{proof}
On one hand, using \eqref{barrho}, \eqref{ux} and $\xi=x(t+1)^{-\frac{1}{\gamma+1}}$, we have
\begin{align*}
&\left\|\bar{\rho}^{\beta_1}\bar{u}^{\beta_2}\right\|^p_{L^p}\\
=&\int^{+\infty}_{-\infty}\left\{(1+t)^{-\frac{1}{\gamma+1}}\left[\left(A_0-B_0 (1+t)^{-\frac{2}{\gamma+1}}x^2\right)_{+}\right]^{\frac{1}{\gamma-1}}\right\}^{\beta_1 p}\left[\frac{ x}{(1+t)(\gamma+1)}\right]^{p\beta_2}dx\\
\leq& C\int^{\sqrt{\frac{A_0}{B_0}}}_{-\sqrt{\frac{A_0}{B_0}}}(1+t)^{-\frac{\beta_1 p}{\gamma+1}}(1+t)^{-\frac{p\beta_2\gamma}{\gamma+1}}
\left[\left(A_0-B_0\xi^2\right)_{+}\right]^{\frac{\beta_1 p}{\gamma-1}}\xi^{p\beta_2}(1+t)^{\frac{1}{\gamma+1}}d\xi\\
\leq& C(1+t)^{-\frac{p(\beta_2\gamma+\beta_1)-1}{r+1}},
\end{align*}
where the last inequality is from the estimation
\begin{align*}
\int^{\sqrt{\frac{A_0}{B_0}}}_{-\sqrt{\frac{A_0}{B_0}}}\left[\left(A_0-B_0\xi^2\right)_{+}\right]^{\frac{\beta_1 p}{\gamma-1}}\xi^{p\beta_2}d
\xi&\leq  C\int^{1}_{-1}\left[\left(A_0-A_0\tau^2\right)_{+}\right]^{\frac{\beta_1 p}{\gamma-1}}\sqrt{\frac{A_0}{B_0}}d\tau\\
&=C\int^{1}_{0}(1-\tau)^{\frac{\beta_1 p}{\gamma-1}}(1+\tau)^{\frac{\beta_1 p}{\gamma-1}}d\tau\\
&\leq C\int^{1}_{0}(1-\tau)^{\frac{\beta_1 p}{\gamma-1}}d\tau\leq C,
\end{align*}
by $p\beta_2\ge0$, $\sqrt{\frac{A_0}{B_0}}\tau=\xi$, and \eqref{beta1}. Hence, we obtain \eqref{rhobeta}. Particularly,
\begin{align*}
\left\|\bar{\rho}^{\beta_1}\bar{u}^{\beta_2}\right\|_{L^{\infty}}=ess \sup_{x\in R}\left (\bar{\rho}^{\beta_1}\bar{u}^{\beta_2}\right)
\leq C(1+t)^{-\frac{\beta_2\gamma+\beta_1}{r+1}},
\end{align*}
by $x=C(1+t)^{\frac{1}{\gamma+1}}$, which gives \eqref{rhobeta'}.

On the other hand, since $\bar{R}=\bar{m}_t+\left(\frac{\bar{m}^2}{\bar{\rho}}\right)_x$, and $\bar{m}=\bar{\rho}\bar{u}$, we have
\begin{align*}
\bar{R}&=(\bar{\rho}\bar{u})_t+\left(\bar{\rho}\bar{u}^2\right)_x=\bar{\rho}_t\bar{u}+\bar{\rho}\bar{u}_t+\bar{\rho}_x\bar{u}^2+
2\bar{\rho}\bar{u}\bar{u}_x=\bar{\rho}\bar{u}_t+\bar{\rho}\bar{u}\bar{u}_x,
\end{align*}
by $\bar{\rho}_t+(\bar{\rho}\bar{u})_x=0$. Combining the above equality and \eqref{ux}, we derive
$$
\frac{\bar{R}}{\bar{\rho}}=\bar{u}_t+\bar{u}\bar{u}_x=-\frac{\gamma x}{(1+t)^{2}(\gamma+1)^2},
$$
which implies that with $\xi=x(t+1)^{-\frac{1}{\gamma+1}}$,
\begin{align*}
&\left\|\bar{\rho}^\delta\left(\frac{\bar{R}}{\bar{\rho}}\right)\right\|^{p}_{L^p} \\ =&\int^{+\infty}_{-\infty}\left\{(1+t)^{-\frac{1}{\gamma+1}}\left[\left(A_0-B_0 (1+t)^{-\frac{2}{\gamma+1}}x^2\right)_{+}\right]^{\frac{1}{\gamma-1}}\right\}^{\delta p}\left[\frac{\gamma x}{(1+t)^{2}(\gamma+1)^2}\right]^{p}dx\\
\leq& C\int^{\sqrt{\frac{A_0}{B_0}}}_{-\sqrt{\frac{A_0}{B_0}}}(1+t)^{-\frac{\delta p}{\gamma+1}}(1+t)^{-\frac{p(2\gamma+1)}{\gamma+1}}
\left[\left(A_0-B_0\xi\right)_{+}\right]^{\frac{\delta p}{\gamma-1}}\xi^p(1+t)^{\frac{1}{\gamma+1}}d\xi\\
\leq& C(1+t)^{-\frac{p(\delta+2 \gamma+1)-1}{\gamma+1}},
\end{align*}
from which we complete the proof of \eqref{rhodelta}.
\end{proof}

\section{Some important properties of entropy-flux pair}

First, since the convexity of the entropy function is crucial, we recall the following result.
\begin{lemma}[\cite{LIO}]\label{lem61}
Weak entropy $\eta(\rho , m)$ defined in (3.1) is convex with respect to $\rho$ and $m$ if and only if $g(\xi)$ is a convex function.
\end{lemma}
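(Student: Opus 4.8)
The plan is to work on the open region $\{\rho>0\}$, where (for smooth $g$) the map $\eta_g$ generated by $g$ through \eqref{etaq} is $C^2$, so that convexity in $(\rho,m)$ is equivalent to $\nabla^2_{(\rho,m)}\eta_g\succeq 0$ everywhere, and to reduce everything to $g''$. Two elementary observations do most of the work. First, $\eta_g$ is linear in $g$, and an \emph{affine} $g(\xi)=a\xi+b$ produces an affine $\eta_g$: the cross term vanishes because $\int_{-1}^1 z(1-z^2)^\lambda\,\mathrm dz=0$, so $\eta_g=\big(\int_{-1}^1(1-z^2)^\lambda\,\mathrm dz\big)(am+b\rho)$; hence $\nabla^2\eta_g$ depends on $g$ only through $g''$. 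Second, since $\chi(\cdot;\rho,u)$ is supported in $[u-\rho^\theta,u+\rho^\theta]$, for $(\rho,m)$ ranging over a small neighbourhood of a fixed point only the values of $g$ on a fixed bounded interval $\tilde J$ matter; writing $g(\xi)=\ell(\xi)+\int_{\tilde J}(\xi-s)_+\,g''(s)\,\mathrm ds$ (valid on $\tilde J$, with $\ell$ affine) and interchanging the order of integration gives, locally,
\[
\eta_g=\eta_\ell+\int_{\tilde J}g''(s)\,\eta_{G_s}\,\mathrm ds,\qquad G_s(\xi):=(\xi-s)_+ .
\]
Since $\eta_\ell$ is affine, the whole question reduces to the sign of $\nabla^2\eta_{G_s}$.

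The core step is that each "kink entropy" $\eta_{G_s}$ is convex. Using the Galilean invariance $\chi(\xi;\rho,u)=\chi(\xi-s;\rho,u-s)$ one finds $\eta_{G_s}(\rho,m)=\eta_{G_0}(\rho,m-s\rho)$, a composition with a linear map, so it suffices to treat $s=0$. With $u=m/\rho$ and $1+\theta=\tfrac{\gamma+1}{2}$,
\begin{align*}
\eta_{G_0}(\rho,m)&=\rho\int_{-1}^1\big(u+z\rho^\theta\big)_+(1-z^2)^\lambda\,\mathrm dz\\
&=\int_0^1\Big[\big(m+z\rho^{(\gamma+1)/2}\big)_++\big(m-z\rho^{(\gamma+1)/2}\big)_+\Big](1-z^2)^\lambda\,\mathrm dz ,
\end{align*}
and for each fixed $z\in(0,1)$ an elementary case check gives
\[
\tfrac12\Big[\big(m+z\rho^{(\gamma+1)/2}\big)_++\big(m-z\rho^{(\gamma+1)/2}\big)_+\Big]=\max\Big\{0,\ \tfrac12\big(m+z\rho^{(\gamma+1)/2}\big),\ m\Big\},
\]
a maximum of three functions that are convex in $(\rho,m)$ on $\{\rho\ge0\}$ — here $\rho\mapsto\rho^{(\gamma+1)/2}$ is convex precisely because $\tfrac{\gamma+1}{2}\ge1$ — hence convex. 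Integrating against the nonnegative weight $(1-z^2)^\lambda$, which is integrable on $(0,1)$ since $\lambda>-1$, preserves convexity, so $\eta_{G_0}$, and therefore every $\eta_{G_s}$, is convex.

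Both implications now follow. If $g$ is convex then $g''\ge0$, so near any point $\nabla^2\eta_g=\int_{\tilde J}g''(s)\,\nabla^2\eta_{G_s}\,\mathrm ds$ is a nonnegative superposition of positive semidefinite matrices, hence $\succeq0$, and $\eta_g$ is convex. Conversely, suppose $g''(\xi_*)<0$ for some $\xi_*$. Pick $(\rho_0,m_0)$ with $m_0/\rho_0=\xi_*$ and $\rho_0$ so small (possible since $\theta>0$ and $g''$ is continuous) that $g''<0$ on the whole interval $\tilde J$ attached to a neighbourhood of $(\rho_0,m_0)$. Then $\nabla^2\eta_g(\rho_0,m_0)=\int_{\tilde J}g''(s)\,\nabla^2\eta_{G_s}(\rho_0,m_0)\,\mathrm ds$ is a \emph{negative} combination of positive semidefinite matrices, hence negative semidefinite; in fact it is negative definite, because $\int_{\tilde J}\nabla^2\eta_{G_s}(\rho_0,m_0)\,\mathrm ds=\nabla^2\big(\int_{\tilde J}\eta_{G_s}\,\mathrm ds\big)(\rho_0,m_0)$ is the Hessian of an entropy of mechanical-energy type $\propto \frac{m^2}{\rho}+\rho^\gamma$ (as $\int_{\tilde J}(\xi-s)_+\,\mathrm ds$ is, on the relevant $\xi$-range, a quadratic with positive leading coefficient), and that Hessian is positive definite at any $\rho_0>0$ since the kernels of $\nabla^2(m^2/\rho)$ and $\nabla^2(\rho^\gamma)$ are distinct lines. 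Hence $\nabla^2\eta_g$ fails to be positive semidefinite, so $\eta_g$ is not convex.

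The step I expect to be the main obstacle is the local reduction "$\nabla^2\eta_g$ sees $g$ only through $g''$ on a bounded interval": making the representation $\eta_g=\eta_\ell+\int_{\tilde J}g''(s)\eta_{G_s}\,\mathrm ds$ and the passage $\nabla^2\int=\int\nabla^2$ rigorous requires pinning down the $\xi$-support of $\chi(\cdot;\rho,u)$ uniformly over a neighbourhood, together with some care about the regularity of the kink entropies near the edge of their support (and near the vacuum); the remaining ingredients — the Galilean reduction, the $\max$-of-convex-functions identity, and the positive definiteness of the energy-type Hessian — are routine. A purely computational alternative, differentiating $\eta_g$ twice in $(\rho,m)$ and diagonalizing the resulting $2\times2$ matrix to exhibit the factor $g''$, is also available but considerably more laborious, which is why I would organize the proof around the structural argument above.
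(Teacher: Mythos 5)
Your proposal is correct in substance but follows a genuinely different route from the paper. The paper (which in any case only proves the sufficiency direction here, citing \cite{LIO} for the full equivalence) computes the Hessian of $\eta$ explicitly as in \eqref{Hessiaeta}, integrates by parts to convert the $g'$-term in $\eta_{\rho\rho}$ into a manifestly nonnegative $g''$-term, and concludes positive semidefiniteness by Cauchy--Schwarz; this computation is not wasted, since the same Hessian structure is reused later for $\psi$, $A$, $D$ and $E$ in Section 5. You instead decompose $g$ locally as an affine function plus a superposition $\int_{\tilde J}(\xi-s)_+\,g''(s)\,ds$, reduce each kink entropy to $s=0$ by Galilean invariance, and prove its convexity via the identity $\tfrac12\big[(m+A)_+ + (m-A)_+\big]=\max\{0,\tfrac12(m+A),m\}$ with $A=z\rho^{(\gamma+1)/2}$, $\tfrac{\gamma+1}{2}\ge 1$; this yields the "if" direction without ever differentiating $\eta$, works uniformly for all $\gamma>1$ (where $\lambda>-1$ keeps the weight integrable), and, unlike the paper, also delivers the "only if" direction by localizing at small $\rho_0$ where $g''<0$ on the whole relevant $\xi$-interval. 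The one step you should tighten is exactly the one you flag: writing $\nabla^2\eta_g=\int_{\tilde J}g''(s)\,\nabla^2\eta_{G_s}\,ds$ presumes second derivatives of the kink entropies, which are delicate near the edge of the support of $\chi$ (especially for $\gamma\ge 3$). This is avoidable: with $g''\le-\varepsilon$ on $\tilde J$, write locally $\eta_g=\eta_\ell-\varepsilon E-C$, where $C=\int_{\tilde J}(|g''|-\varepsilon)\eta_{G_s}\,ds$ is convex and $E=\int_{\tilde J}\eta_{G_s}\,ds$ coincides near $(\rho_0,m_0)$ with the smooth mechanical-energy entropy $C_1(m-c\rho)^2/\rho+C_2\rho^\gamma$, whose Hessian is positive definite for $\rho_0>0$ because the kernels of the two rank-one pieces are distinct lines; if $\eta_g$ were convex, then $\varepsilon E=\eta_\ell-\eta_g-C$ would be concave near that point, a contradiction. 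With that purely function-level patch, both implications are complete.
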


Let us briefly repeat the proof of the convexity of $\eta(\rho , m)$ here. It is easy to compute the Hessian matrix of $\eta(\rho , m)$:
\begin{equation}\label{Hessiaeta}
\begin{aligned}
\eta_{\rho\rho}&=\int^{1}_{-1}(\theta^2+\theta) \rho^{\theta-1} g' z\left(1-z^2\right)^\lambda dz  +\rho\int^{1}_{-1}\left(-\frac{m}{\rho^2}+z\theta \rho^{\theta-1}\right)^2 g''\left(1-z^2\right)^\lambda d z\\
&=\int^{1}_{-1}\theta^2\rho^{2\theta-1} g'' \left(1-z^2\right)^{\lambda+1} dz  +\rho\int^{1}_{-1}\left(-\frac{m}{\rho^2}+z\theta \rho^{\theta-1}\right)^2 g''\left(1-z^2\right)^\lambda d z,\\
\eta_{mm}&=\frac{1}{\rho} \int^{1}_{-1} g''\left(1-z^2\right)^\lambda d z,\\
\eta_{\rho m}&=\int^{1}_{-1}\left(-\frac{m}{\rho^2}+z\theta\rho^{\theta-1}\right) g'' \left(1-z^2\right)^\lambda dz.
\end{aligned}
\end{equation}
We see that whenever $g$ is convex, it holds that $\eta_{\rho\rho}\ge0$ and $\eta_{mm}\ge0$,  and also
\begin{align*}
\eta_{\rho\rho}\eta_{mm}-\left(\eta_{\rho m}\right)^2\ge&\int^{1}_{-1}\left(-\frac{m}{\rho^2}+z\theta \rho^{\theta-1}\right)^2 g''\left(1-z^2\right)^\lambda d z\cdot \int^{1}_{-1} g''\left(1-z^2\right)^\lambda d z\\
&-\left[\int^{1}_{-1}\left(-\frac{m}{\rho^2}+z\theta\rho^{\theta-1}\right) g'' \left(1-z^2\right)^\lambda dz\right]^2,
\end{align*}
which is again nonnegative by the Cauchy-Schwarz inequality. Here we have used
$$
\int^{1}_{-1}\theta^2\rho^{2\theta-1} g'' \left(1-z^2\right)^{\lambda+1} dz\ge0,
$$
which is extremely important in our later arguments. Thus, we get the convexity of $\eta$.

Next, for sharp convergence rate, we define
$$
\begin{aligned}
& \eta_*=\eta(v)-\eta(\bar{v})-\nabla \eta(\bar{v})(v-\bar{v}), \\
& q_*=q(v)-q(\bar{v})-\nabla \eta(\bar{v})(f(v)-f(\bar{v})),
\end{aligned}
$$
where $\eta$ and $q$ are any weak convex entropy-flux pair defined in \eqref{etaq},
$$f(v)=\left(m, \frac{m^2}{\rho}\right)^t, f(\bar{v})=\left(\bar{m}, \frac{\bar{m}^2}{\bar{\rho}}\right)^t ,$$
and
\begin{equation}\label{pQ}
\begin{aligned}
P_*&=p(\rho)-p(\bar{\rho})-p'(\bar{\rho})(\rho-\bar{\rho})\ge0,\\
Q_*&=\frac{m^2}{\rho}-\frac{\bar{m}^2}{\bar{\rho}}+\frac{\bar{m}^2}{\bar{\rho}^2}(\rho-\bar{\rho})-\frac{2\bar{m}}{\bar{\rho}}(m-\bar{m})\ge0,
\end{aligned}
\end{equation}
due to the convexity of $p(\rho)$ and $\frac{m^2}{\rho}$. Then, the properties of $\eta_*$ and $q_*$ are stated in the following.

\begin{proposition}\label{lem51}
Assume $\eta_*$ and $q_*$ are defined as above. Let
\begin{align*}
Q_1(v)=m\int^{1}_{-1}g'(1-z^2)^{\lambda}dz.
\end{align*}
Then, under the conditions of Theorem 1.1, it holds that
\begin{equation}\label{detadt}
\begin{aligned}
& \frac{d}{dt}\int_{-\infty}^{+\infty}\eta_{* }dx +\alpha\int_{-\infty}^{+\infty}Q_{1*}dx\\
=&\int_{-\infty}^{+\infty}\left[\frac{\bar{m}\bar{R}}{\bar{\rho}^2}\left(\rho-\bar{\rho}\right)-\frac{\bar{R}}{\bar{\rho}}
\left(m-\bar{m}\right)\right]\int^{1}_{-1}\bar{g}''\left(1-z^2\right)^{\lambda}dz\\
&-\int_{-\infty}^{+\infty}(\rho-\bar{\rho})\frac{\bar{R}}{\bar{\rho}}\theta\bar{\rho}^{\theta}\int^{1}_{-1}\bar{g}''z
\left(1-z^2\right)^{\lambda}dz-\int_{-\infty}^{+\infty}(\eta_{\bar{m}})_x(P_*+Q_*)dx.
\end{aligned}
\end{equation}

\end{proposition}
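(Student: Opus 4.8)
\noindent\textbf{Proof strategy for Proposition \ref{lem51}.}
The plan is to establish \eqref{detadt} first for classical solutions by a relative-entropy computation, and then to transfer it to $L^{\infty}$ weak entropy solutions exactly as in \cite{HUA}: the entropy inequality for $(\rho,m)$ is used in the distributional sense, while all the remaining algebra involves only the smooth Barenblatt profile $\bar v=(\bar\rho,\bar m)^{t}$, so the limiting procedure introduces nothing new. Concretely, I would write $\eta_*=\eta(v)-\eta(\bar v)-\nabla\eta(\bar v)(v-\bar v)$, differentiate $\int_{\mathbf R}\eta_*\,dx$ in $t$, and substitute the time derivatives using \eqref{4.2} for $v$ and \eqref{4.3} for $\bar v$, noting that the source $\bar R$ enters precisely through $\bar v_t$ in \eqref{4.3}.

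The next step uses the entropy identity $\partial_t\eta(v)+\partial_x q(v)=\nabla\eta(v)\cdot(v_t+F(v)_x)$ for the full flux $F(v)=\big(m,\,\tfrac{m^{2}}{\rho}+p(\rho)\big)^{t}$ (and its analogue at $\bar v$), together with the entropy--flux compatibility $\nabla q=\nabla\eta\,\nabla F$ built into the notion of an entropy pair, which forces the matrix $\nabla^{2}\eta\,\nabla F$ to be symmetric. With these one rewrites $\partial_t\eta_*$ so that, after integration in $x$, all purely convective contributions become total $x$-derivatives; these vanish because $\bar\rho$ has compact support and the relevant integrands decay at $\pm\infty$ (by finite mass and Lemma \ref{thm2}). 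What survives splits into three groups: the damping terms, the $\bar R$-source terms, and a flux-difference remainder.

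I would then treat each group. The flux-difference remainder is $-\int_{\mathbf R}\big(\nabla\eta(\bar v)\big)_x\cdot\big[F(v)-F(\bar v)-\nabla F(\bar v)(v-\bar v)\big]\,dx$; its first component vanishes (the mass flux $m$ is linear in the conserved variables), while the second is exactly $P_*+Q_*$ by \eqref{pQ}, giving $-\int_{\mathbf R}(\eta_{\bar m})_x(P_*+Q_*)\,dx$. For the $\bar R$-terms, $\bar R$ survives only in the combination $-\bar R\big[\eta_{\bar\rho\bar m}(\rho-\bar\rho)+\eta_{\bar m\bar m}(m-\bar m)\big]$ arising from $\bar v_t$ inside the Hessian quadratic form; inserting the entries from \eqref{Hessiaeta}, namely $\eta_{\bar m\bar m}=\tfrac{1}{\bar\rho}\int_{-1}^{1}\bar g''(1-z^{2})^{\lambda}\,dz$ and $\eta_{\bar\rho\bar m}=-\tfrac{\bar m}{\bar\rho^{2}}\int_{-1}^{1}\bar g''(1-z^{2})^{\lambda}\,dz+\theta\bar\rho^{\theta-1}\int_{-1}^{1}\bar g''z(1-z^{2})^{\lambda}\,dz$, and using $\tfrac{\bar R}{\bar\rho}\bar\rho^{\theta}=\bar R\bar\rho^{\theta-1}$, produces exactly the first two integrals on the right of \eqref{detadt}. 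Finally, for the damping terms, formula \eqref{etaq} gives $\eta_m=\int_{-1}^{1}g'(1-z^{2})^{\lambda}\,dz$, hence $Q_1=m\,\eta_m$, and the several $\alpha$-contributions recombine with no leftover into $-\alpha\,Q_{1*}$, which is moved to the left-hand side.

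The main obstacle is this last recombination. There are four $\alpha$-contributions — $-\alpha\,\eta_m(v)m$ from the exact momentum equation, $\alpha\,\eta_{\bar m}\bar m$ and $\alpha\,\eta_{\bar m}(m-\bar m)$ from the damping terms in \eqref{4.2}--\eqref{4.3}, and the $\alpha\bar m$-part of the Hessian quadratic form $\big(\nabla\eta(\bar v)\big)_t\cdot(v-\bar v)$ — and one must verify that they sum to exactly $-\alpha\big(Q_1(v)-Q_1(\bar v)-\nabla Q_1(\bar v)(v-\bar v)\big)$. This hinges on the two identities $Q_{1,\bar\rho}=\bar m\,\eta_{\bar\rho\bar m}$ and $Q_{1,\bar m}=\eta_{\bar m}+\bar m\,\eta_{\bar m\bar m}$, which are immediate from $Q_1=m\,\eta_m$ but must be matched carefully with the $\alpha$-terms generated by the Barenblatt equation. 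A secondary technical point is to justify that no boundary contributions are lost at $\pm\infty$ and that $\nabla\eta(\bar v)$, $\nabla^{2}\eta(\bar v)$ may legitimately be used across the vacuum interface where $\bar\rho$ and $\bar\rho_x$ are discontinuous; this is handled as in \cite{HUA,HUA0}, since all the relevant integrands are integrable up to $\{\bar\rho=0\}$ and vanish there.
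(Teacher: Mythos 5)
Your proposal is correct, and it reaches \eqref{detadt} by a genuinely different organization than the paper. You invoke the Dafermos-type relative entropy identity with the \emph{full} flux $F(v)=(m,\tfrac{m^2}{\rho}+p(\rho))^t$, cancel the quadratic convective terms through the symmetry of $\nabla^2\eta\,\nabla F$, and only at the end use the kinetic representation to identify $\eta_{\bar m}$, $\eta_{\bar m\bar m}$, $\eta_{\bar\rho\bar m}$; I checked that your bookkeeping closes: the $\bar R$-contribution indeed survives only in $-\bar R\bigl[\eta_{\bar\rho\bar m}(\rho-\bar\rho)+\eta_{\bar m\bar m}(m-\bar m)\bigr]$ and, with the entries of \eqref{Hessiaeta}, reproduces the first two integrals of \eqref{detadt}; the flux-difference remainder has vanishing first component and second component $P_*+Q_*$ by \eqref{pQ}, giving $-\int(\eta_{\bar m})_x(P_*+Q_*)\,dx$; and the four $\alpha$-terms recombine exactly into $-\alpha Q_{1*}$ via $Q_1=m\,\eta_m$, $Q_{1,\bar\rho}=\bar m\,\eta_{\bar\rho\bar m}$, $Q_{1,\bar m}=\eta_{\bar m}+\bar m\,\eta_{\bar m\bar m}$. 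The paper instead defines $q_*$ with the convective-only flux $f(v)=(m,\tfrac{m^2}{\rho})^t$, never invokes the compatibility $\nabla q=\nabla\eta\,\nabla F$ abstractly, and proves the identity by the explicit decomposition $k_1+k_2+k_3+k_4$, computing everything from \eqref{etaq} with repeated integration by parts in $z$ and the structural identity $\tfrac{\theta+\theta^2}{2(\lambda+1)}=\theta^2$; the pressure is absorbed inside $k_3$, which is where $P_*$ emerges. Your route is shorter and makes transparent \emph{why} the convective terms drop and why $\bar R$ enters only through the Hessian quadratic form; the paper's computation, though heavier, delivers the kinetic integrals (e.g.\ $\int_{-1}^1\bar g'' z(1-z^2)^\lambda dz$) in precisely the form used in the subsequent estimates of $J_1,J_2,J_3$, and sidesteps any appeal to $\nabla F$, which is singular at vacuum. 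One caveat applies equally to both arguments: for an $L^\infty$ weak entropy solution the identity is formal, and strictly one obtains the inequality ``$\leq$'' (the entropy production of the convex pair is a nonpositive measure), which is all that is needed downstream; since the paper's own proof also uses the strong form of \eqref{4.2}, this is not a gap of your proposal relative to the paper.
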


\begin{proof}
By a direct computation, we have
\begin{equation}\label{etaqk}
\begin{aligned}
& \eta_{* t}+q_{* x} \\
&= {\left[\eta(v)_t+q(v)_x\right]-\left[\eta(\bar{v})_t+q(\bar{v})_x\right]-\left[\nabla \eta(\bar{v})(v-\bar{v})\right] }_t -[ \nabla \eta(\bar{v})(f(v)-f(\bar{v}))]_x\\
&=:k_1+k_2+k_3+k_4.
\end{aligned}
\end{equation}
First, for $k_1$, we deduce
\begin{align*}
\eta(v)_t&=\rho_t\int^{1}_{-1} g(1-z^2)^{\lambda}dz+\rho\int^{1}_{-1}g'\left(\frac{m_t}{\rho}-\frac{m}{\rho^2}\rho_t
+z\theta\rho^{\theta-1}\rho_t\right)(1-z^2)^{\lambda}dz\\
&=\rho_t\int^{1}_{-1}g(1-z^2)^{\lambda}dz+\int^{1}_{-1}g'\left(m_t-\frac{m}{\rho}\rho_t+z\theta\rho^{\theta}\rho_t\right)(1-z^2)^{\lambda}dz,
\end{align*}
and
\begin{align*}
&q(v)_x\\
=&\rho_x\int^{1}_{-1}g\left(\frac{m}{p}+z\theta\rho^{\theta}\right)(1-z^2)^{\lambda}dz+\rho\int^{1}_{-1}g\left(\frac{m_x}{\rho}-\frac{m}
{\rho^2}\rho_x+z\theta^2\rho^{\theta-1}\rho_x\right)(1-z^2)^{\lambda}dz\\
&+\rho\int^{1}_{-1}g'\left(\frac{m_x}{\rho}-\frac{m}{\rho^2}\rho_x+z\theta\rho^{\theta-1}\rho_x
\right)\left(\frac{m}{p}+z\theta\rho^{\theta}\right)(1-z^2)^{\lambda}dz\\
=&\theta\rho^{\theta}\rho_x\int^{1}_{-1}gz(1-z^2)^{\lambda}dz+\int^{1}_{-1}g\left(m_x+z\theta^2\rho^{\theta}\rho_x\right)(1-z^2)^{\lambda}dz\\
&+\int^{1}_{-1}g'\left(\frac{m}{\rho}m_x+m_xz\theta\rho^{\theta}-\frac{m^2}{\rho^2}\rho_x+
z^2\theta^2\rho^{2\theta}\rho_x\right)(1-z^2)^{\lambda}dz.
\end{align*}
Combining the above two equalities and $\rho_t+m_x=0$, we obtain
\begin{equation}\label{k10}
\begin{aligned}
\eta(v)_t+q(v)_x=&(\theta+\theta^2)\rho^{\theta}\rho_x\int^{1}_{-1}gz(1-z^2)^{\lambda}dz\\
&+\int^{1}_{-1}g'\left(m_t+\frac{2 m}{p}m_x-\frac{m^2}{\rho^2}\rho_x+
z^2\theta^2\rho^{2\theta}\rho_x\right)(1-z^2)^{\lambda}dz.
\end{aligned}
\end{equation}
Integrating by parts gives
\begin{align*}
\int^{1}_{-1}gz(1-z^2)^{\lambda}dz=&-\left[\frac{1}{2(\lambda+1)}g(1-z^2)^{\lambda+1}\right]^{1}_{-1}+\frac{1}{2(\lambda+1)}\int^{1}_{-1}
g'\rho^{\theta}(1-z^2)^{\lambda+1}dz\\
=&\frac{\rho^{\theta}}{2(\lambda+1)}\int^{1}_{-1}g'(1-z^2)^{\lambda+1}dz.
\end{align*}
As a result,
\begin{equation}\label{k11}
\begin{aligned}
(\theta+\theta^2)\rho^{\theta}\rho_x\int^{1}_{-1}gz(1-z^2)^{\lambda}dz&=(\theta+\theta^2)
\frac{\rho^{2\theta}\rho_x}{2(\lambda+1)}\int^{1}_{-1}g'(1-z^2)^{\lambda+1}dz\\
&=\theta^2\rho^{2\theta}\rho_x\int^{1}_{-1}g'(1-z^2)^{\lambda+1}dz,
\end{aligned}
\end{equation}
due to
\begin{equation}\label{theta2}
\frac{\theta+\theta^2}{{2(\lambda+1)}}=\frac{\gamma-1}{2}\cdot\frac{\gamma+1}{2}\cdot\frac{\gamma-1}{\gamma+1}=\left(\frac{\gamma-1}{2}\right)^2
=\theta^2.
\end{equation}
By \eqref{4.2}, we derive
$$
m_t+\frac{2 m}{p}m_x-\frac{m^2}{\rho^2}\rho_x=-\alpha m-\kappa\gamma\rho^{\gamma-1}\rho_x=-\alpha m-\theta^2\rho^{2\theta}\rho_x.
$$
Thus, it follows from \eqref{k10}, \eqref{k11} and the above equality that
\begin{equation}\label{k1}
\begin{aligned}
k_1=&\int^{1}_{-1}g'\left(-\alpha m-\theta^2\rho^{2\theta}\rho_x+z^2\theta^2\rho^{2\theta}\rho_x\right)
(1-z^2)^{\lambda}dz+\theta^2\rho^{2\theta}\rho_x\int^{1}_{-1}g'(1-z^2)^{\lambda+1}dz\\
=&-\alpha m\int^{1}_{-1}g'(1-z^2)^{\lambda}dz.
\end{aligned}
\end{equation}

For $k_2$, we note that \eqref{4.3} is similar to \eqref{4.2}, and so get, with $\alpha \bar{m}-\bar{R}$ instead of $-\alpha m$ in \eqref{k1},
\begin{equation}\label{k2}
k_2=-\eta(\bar{v})_t-q(\bar{v})_x=(\alpha \bar{m}-\bar{R})\int^{1}_{-1}\bar{g}'(1-z^2)^{\lambda}dz.
\end{equation}

As for $k_3$, we observe
\begin{equation}\label{k30}
\begin{aligned}
{\left[\nabla \eta(\bar{v})(v-\bar{v})\right] }_t&=[(\eta_{\bar{\rho}})(\rho-\bar{\rho})+(\eta_{\bar{m}})(m-\bar{m})]_t\\
&=(\eta_{\bar{\rho}})_t(\rho-\bar{\rho})+(\eta_{\bar{\rho}})(\rho-\bar{\rho})_t+(\eta_{\bar{m}})_t(m-\bar{m})+(\eta_{\bar{m}})(m-\bar{m})_t.
\end{aligned}
\end{equation}
Since $\rho_t+m_x=0$ and $\bar{\rho}_t+\bar{m}_x=0$, it follows that
\begin{equation}\label{k31}
\begin{aligned}
(\eta_{\bar{\rho}})(\rho-\bar{\rho})_t+(\eta_{\bar{m}})_t(m-\bar{m})&=-(\eta_{\bar{\rho}})(m-\bar{m})_x+(\eta_{\bar{m}})_t(m-\bar{m})\\
&=(\eta_{\bar{\rho}})_x(m-\bar{m})+(\eta_{\bar{m}})_t(m-\bar{m})-\left[(\eta_{\bar{\rho}})(m-\bar{m})\right]_x\\
&=\left[(\eta_{\bar{\rho}})_x+(\eta_{\bar{m}})_t\right](m-\bar{m})-\left[(\eta_{\bar{\rho}})(m-\bar{m})\right]_x.
\end{aligned}
\end{equation}
Using \eqref{4.2}, \eqref{4.3} and \eqref{pQ} yields
\begin{align*}
&-\eta_{\bar{m}}(m-\bar{m})_t\\
=&\eta_{\bar{m}}\left\{\left[\alpha m+\left(\frac{m^2}{\rho}+p(\rho)\right)_x\right]-\left[\alpha \bar{m}-\bar{R}+\left(\frac{\bar{m}^2}{\bar{\rho}}+p(\bar{\rho})\right)_x\right]\right\}\\
=&\eta_{\bar{m}}(\alpha m-\alpha\bar{m}+\bar{R})+\eta_{\bar{m}}\left[\left(\frac{m^2}{\rho}+p(\rho)\right)-\left
(\frac{\bar{m}^2}{\bar{\rho}}+p(\bar{\rho})\right)\right]_x\\
=&\eta_{\bar{m}}(\alpha m-\alpha\bar{m}+\bar{R})+\eta_{\bar{m}}\left(P_*+Q_*+p'(\bar{\rho})(\rho-\bar{\rho})
-\frac{\bar{m}^2}{\bar{\rho}^2}(\rho-\bar{\rho})+\frac{2\bar{m}}{\bar{\rho}}(m-\bar{m})\right)_x\\
=&\eta_{\bar{m}}(\alpha m-\alpha\bar{m}+\bar{R})-(\eta_{\bar{m}})_x\left(p'(\bar{\rho})(\rho-\bar{\rho})
-\frac{\bar{m}^2}{\bar{\rho}^2}(\rho-\bar{\rho})+\frac{2\bar{m}}{\bar{\rho}}(m-\bar{m})\right)\\
&-(\eta_{\bar{m}})_x\left(P_*+Q_*\right)+\left\{\eta_{\bar{m}}\left[\left(\frac{m^2}{\rho}+P(\rho)\right)-\left
(\frac{\bar{m}^2}{\bar{\rho}}+P(\bar{\rho})\right)\right]\right\}_x.
\end{align*}
From this equality, \eqref{k30} and \eqref{k31}, we derive
\begin{equation}\label{k3}
\begin{aligned}
k_3=&-(\eta_{\bar{\rho}})_t(\rho-\bar{\rho})-\left[(\eta_{\bar{\rho}})_x+(\eta_{\bar{m}})_t\right](m-\bar{m})+
\left[(\eta_{\bar{\rho}})(m-\bar{m})\right]_x\\
&+\eta_{\bar{m}}(\alpha m-\alpha\bar{m}+\bar{R})-(\eta_{\bar{m}})_x\left(p'(\bar{\rho})(\rho-\bar{\rho})
-\frac{\bar{m}^2}{\bar{\rho}^2}(\rho-\bar{\rho})+\frac{2\bar{m}}{\bar{\rho}}(m-\bar{m})\right)\\
&-(\eta_{\bar{m}})_x\left(P_*+Q_*\right)+\left\{\eta_{\bar{m}}\left[\left(\frac{m^2}{\rho}+P(\rho)\right)-\left
(\frac{\bar{m}^2}{\bar{\rho}}+P(\bar{\rho})\right)\right]\right\}_x\\
=&\left[-(\eta_{\bar{\rho}})_t-(\eta_{\bar{m}})_xp'(\bar{\rho})+(\eta_{\bar{m}})_x\frac{\bar{m}^2}{\bar{\rho}^2}\right](\rho-\bar{\rho})
+\left[(\eta_{\bar{\rho}})(m-\bar{m})\right]_x\\
&+\left[-(\eta_{\bar{\rho}})_x-(\eta_{\bar{m}})_t-\frac{2\bar{m}}{\bar{\rho}}(\eta_{\bar{m}})_x\right](m-\bar{m})+\eta_{\bar{m}}(\alpha m-\alpha\bar{m}+\bar{R})\\
&-(\eta_{\bar{m}})_x\left(P_*+Q_*\right)+\left\{\eta_{\bar{m}}\left[\left(\frac{m^2}{\rho}+P(\rho)\right)-\left
(\frac{\bar{m}^2}{\bar{\rho}}+P(\bar{\rho})\right)\right]\right\}_x.
\end{aligned}
\end{equation}
Observing $\eta(\bar{v})=\bar{\rho} \int_{-1}^1 \bar{g}\left(1-z^2\right)^\lambda dz$, we deduce
$$
\eta_{\bar{\rho}}=\int^{1}_{-1}\bar{g}\left(1-z^2\right)^{\lambda}dz+\int^{1}_{-1}\bar{g}'
\left(-\frac{\bar{m}}{\bar{\rho}}+z\theta\bar{\rho}^{\theta}\right)\left(1-z^2\right)^{\lambda}dz,
$$
and
$$
\eta_{\bar{m}}=\int^{1}_{-1}\bar{g}'\left(1-z^2\right)^{\lambda}dz.
$$
As a result,
\begin{equation}\label{4k31}
\begin{aligned}
&(\eta_{\bar{m}})_x\frac{\bar{m}^2}{\bar{\rho}^2}-(\eta_{\bar{m}})_xp'(\bar{\rho})-(\eta_{\bar{\rho}})_t\\
=&\left[\int^{1}_{-1}\bar{g}'\left(1-z^2\right)^{\lambda}dz\right]_x\frac{\bar{m}^2}{\bar{\rho}^2}-\left[\int^{1}_{-1}\bar{g}'
\left(1-z^2\right)^{\lambda}dz\right]_xp'(\bar{\rho})\\
&-\left[\int^{1}_{-1}\bar{g}\left(1-z^2\right)^{\lambda}dz+
\int^{1}_{-1}\bar{g}'\left(-\frac{\bar{m}}{\bar{\rho}}+z\theta\bar{\rho}^{\theta}\right)\left(1-z^2\right)^{\lambda}dz\right]_t\\
=&\frac{\bar{m}^2}{\bar{\rho}^2}\int^{1}_{-1}\bar{g}''\left(\frac{\bar{m}_x}{\rho}-\frac{\bar{m}}{\bar{\rho}^2}\bar{\rho}_x+z\theta\bar{\rho}
^{\theta-1}\bar{\rho}_x\right)\left(1-z^2\right)^{\lambda}dz-(\theta+\theta^2)\bar{\rho}^{\theta-1}\bar{\rho}_t
\int^{1}_{-1}\bar{g}'\left(1-z^2\right)^{\lambda}zdz\\
&-p'(\bar{\rho})\int^{1}_{-1}\bar{g}''\left(\frac{\bar{m}_x}{\rho}-\frac{\bar{m}}{\bar{\rho}^2}\bar{\rho}_x+
z\theta\bar{\rho}^{\theta-1}\bar{\rho}_x\right)\left(1-z^2\right)^{\lambda}dz\\
&-\int^{1}_{-1}\bar{g}''\left(\frac{\bar{m}_t}{\rho}-\frac{\bar{m}}{\bar{\rho}^2}\bar{\rho}_t+z\theta\bar{\rho}^{\theta-1}\bar{\rho}_t\right)
\left(-\frac{\bar{m}}{\bar{\rho}}+z\theta\bar{\rho}^{\theta}\right)\left(1-z^2\right)^{\lambda}dz\\
=&-(\theta+\theta^2)\bar{\rho}^{\theta-1}\bar{\rho}_t\int^{1}_{-1}\bar{g}'\left(1-z^2\right)^{\lambda}zdz+\left(-\alpha\bar{m}+\bar{R}\right)
\frac{\bar{m}}{\bar{\rho}^2}\int^{1}_{-1}\bar{g}''\left(1-z^2\right)^{\lambda}dz\\
&+\left(\alpha\bar{m}-\bar{R}\right)\theta\bar{\rho}^{\theta-1}\int^{1}_{-1}\bar{g}''z\left(1-z^2\right)^{\lambda}dz+
\theta^2\bar{\rho}^{2\theta-1}\bar{\rho}_t\int^{1}_{-1}\bar{g}''\left(1-z^2\right)^{\lambda+1}dz\\
=&\left(-\alpha\bar{m}+\bar{R}\right)\frac{\bar{m}}{\bar{\rho}^2}\int^{1}_{-1}\bar{g}''\left(1-z^2\right)^{\lambda}dz
+\left(\alpha\bar{m}-\bar{R}\right)\theta\bar{\rho}^{\theta-1}\int^{1}_{-1}\bar{g}''z\left(1-z^2\right)^{\lambda}dz,
\end{aligned}
\end{equation}
by \eqref{4.3} and
\begin{align*}
(\theta+\theta^2)\bar{\rho}^{\theta-1}\bar{\rho}_t\int^{1}_{-1}\bar{g}'\left(1-z^2\right)^{\lambda}zdz=&-\frac{(\theta+\theta^2)}{2(\lambda+1)}
\bar{\rho}^{\theta-1}\bar{\rho}_t\bar{g}'\left(1-z^2\right)^{\lambda+1}\big|^{1}_{-1}\\
&+\frac{(\theta+\theta^2)}{2(\lambda+1)}\bar{\rho}^{2\theta-1}\bar{\rho}_t\int^{1}_{-1}\bar{g}''\left(1-z^2\right)^{\lambda+1}dz\\
=&\theta^2\bar{\rho}^{2\theta-1}\bar{\rho}_t\int^{1}_{-1}\bar{g}''\left(1-z^2\right)^{\lambda+1}dz,
\end{align*}
due to \eqref{theta2}.

Similarly,
\begin{equation}\label{4k32}
\begin{aligned}
&\frac{2\bar{m}}{\bar{\rho}}(\eta_{\bar{m}})_x+(\eta_{\bar{m}})_t+(\eta_{\bar{\rho}})_x\\
=&\left[\int^{1}_{-1}\bar{g}'\left(1-z^2\right)^{\lambda}dz\right]_x\frac{2\bar{m}}{\bar{\rho}}+\left[\int^{1}_{-1}\bar{g}'
\left(1-z^2\right)^{\lambda}dz\right]_t\\
&+\left[\int^{1}_{-1}\bar{g}\left(1-z^2\right)^{\lambda}dz+
\int^{1}_{-1}\bar{g}'\left(-\frac{\bar{m}}{\bar{\rho}}+z\theta\bar{\rho}^{\theta}\right)\left(1-z^2\right)^{\lambda}dz\right]_x\\
=&(\theta+\theta^2)\bar{\rho}^{\theta-1}\bar{\rho}_x\int^{1}_{-1}
\bar{g}'\left(1-z^2\right)^{\lambda}zdz+\frac{-\alpha\bar{m}+\bar{R}}{\bar{\rho}}\int^{1}_{-1}\bar{g}''\left(1-z^2\right)^{\lambda}dz\\
&-\theta^2\bar{\rho}^{2\theta-1}\bar{\rho}_x\int^{1}_{-1}\bar{g}''\left(1-z^2\right)^{\lambda+1}dz\\
=&\frac{-\alpha\bar{m}+\bar{R}}{\bar{\rho}}\int^{1}_{-1}\bar{g}''\left(1-z^2\right)^{\lambda}dz.
\end{aligned}
\end{equation}
Hence, combining \eqref{4k31}, \eqref{4k32} and \eqref{k3}, we derive
\begin{equation}\label{k3'}
\begin{aligned}
k_3=&\left[\frac{\bar{m}\bar{R}}{\bar{\rho}^2}\left(\rho-\bar{\rho}\right)-\frac{\bar{R}}{\bar{\rho}}\left(m-\bar{m}\right)\right]
\int^{1}_{-1}\bar{g}''\left(1-z^2\right)^{\lambda}dz-(\eta_{\bar{m}})_x\left(P_*+Q_*\right)\\
&+\alpha\left[\frac{\bar{m}}{\bar{\rho}}\left(m-\bar{m}\right)-\frac{\bar{m}^2}{\bar{\rho}^2}\left(\rho-\bar{\rho}\right)\right]
\int^{1}_{-1}\bar{g}''\left(1-z^2\right)^{\lambda}dz+ \left[(\eta_{\bar{\rho}})(m-\bar{m})\right]_x\\
&+\left(\rho-\bar{\rho}\right)\left(\alpha\bar{m}-\bar{R}\right)\theta\bar{\rho}^{\theta-1}\int^{1}_{-1}\bar{g}''z\left(1-z^2\right)^{\lambda}dz
+\eta_{\bar{m}}(\alpha m-\alpha\bar{m}+\bar{R})\\
&+\left\{\eta_{\bar{m}}\left[\left(\frac{m^2}{\rho}+P(\rho)\right)-\left
(\frac{\bar{m}^2}{\bar{\rho}}+P(\bar{\rho})\right)\right]\right\}_x.
\end{aligned}
\end{equation}

Later, plugging \eqref{k1}, \eqref{k2} and \eqref{k3'} into \eqref{etaqk}, we obtain
\begin{align}\label{etaqk123}
& \eta_{* t}+q_{* x} \nonumber \\
=&\left[\frac{\bar{m}\bar{R}}{\bar{\rho}^2}\left(\rho-\bar{\rho}\right)-\frac{\bar{R}}{\bar{\rho}}\left(m-\bar{m}\right)\right]
\int^{1}_{-1}\bar{g}''\left(1-z^2\right)^{\lambda}dz-(\rho-\bar{\rho})\frac{\bar{R}}{\bar{\rho}}\theta\bar{\rho}^{\theta}\int^{1}_{-1}\bar{g}''
z\left(1-z^2\right)^{\lambda}dz \nonumber \\
&-(\eta_{\bar{m}})_x(P_*+Q_*)-\alpha m\int^{1}_{-1}g'(1-z^2)^{\lambda}dz+\alpha\bar{m}\int^{1}_{-1}\bar{g}'(1-z^2)^{\lambda}dz \nonumber \\
&+\alpha\bar{m}(\rho-\bar{\rho})\theta\bar{\rho}^{\theta-1}\int^{1}_{-1}\bar{g}''z(1-z^2)^{\lambda}dz
-\alpha\frac{\bar{m}^2}{\bar{\rho^2}}(\rho-\bar{\rho})\int^{1}_{-1}\bar{g}''(1-z^2)^{\lambda}dz  \nonumber \\
&+\alpha(m-\bar{m})\int^{1}_{-1}\bar{g}'(1-z^2)^{\lambda}dz+\alpha\frac{\bar{m}}{\bar{\rho}}(m-\bar{m})
\int^{1}_{-1}\bar{g}''z(1-z^2)^{\lambda}dz  \nonumber \\
&+\left[(\eta_{\bar{\rho}})(m-\bar{m})\right]_x+\left\{\eta_{\bar{m}}\left[\left(\frac{m^2}{\rho}+P(\rho)\right)-\left
(\frac{\bar{m}^2}{\bar{\rho}}+P(\bar{\rho})\right)\right]\right\}_x \nonumber \\
&-[ \nabla \eta(\bar{v})(f(v)-f(\bar{v}))]_x.
\end{align}
From the definition of $Q_{1*}$, we get
\begin{align*}
Q_{1*}=&Q_1(v)-Q_1(\bar{v})-\nabla Q_1(\bar{v})(v-\bar{v})\\
=&m\int^{1}_{-1}g'(1-z^2)^{\lambda}dz-\bar{m}\int^{1}_{-1}\bar{g}'(1-z^2)^{\lambda}dz\\
&-\bar{m}(\rho-\bar{\rho})\theta\bar{\rho}^{\theta-1}\int^{1}_{-1}\bar{g}''z(1-z^2)^{\lambda}dz
+\frac{\bar{m}^2}{\bar{\rho^2}}(\rho-\bar{\rho})\int^{1}_{-1}\bar{g}''(1-z^2)^{\lambda}dz\\
&-(m-\bar{m})\int^{1}_{-1}\bar{g}'(1-z^2)^{\lambda}dz-\frac{\bar{m}}{\bar{\rho}}(m-\bar{m})
\int^{1}_{-1}\bar{g}''z(1-z^2)^{\lambda}dz.
\end{align*}
Integrating \eqref{etaqk123} over $R$, and using the above equality yields
\begin{align*}
& \frac{d}{dt}\int_{-\infty}^{+\infty}\eta_{* }dx +\alpha\int_{-\infty}^{+\infty}Q_{1*}dx\\
=&\int_{-\infty}^{+\infty}\left[\frac{\bar{m}\bar{R}}{\bar{\rho}^2}\left(\rho-\bar{\rho}\right)-\frac{\bar{R}}{\bar{\rho}}
\left(m-\bar{m}\right)\right]\int^{1}_{-1}\bar{g}''\left(1-z^2\right)^{\lambda}dz\\
&-\int_{-\infty}^{+\infty}(\rho-\bar{\rho})\frac{\bar{R}}{\bar{\rho}}\theta\bar{\rho}^{\theta}\int^{1}_{-1}\bar{g}''z
\left(1-z^2\right)^{\lambda}dz-\int_{-\infty}^{+\infty}(\eta_{\bar{m}})_x(P_*+Q_*)dx,
\end{align*}
and this completes the proof.
\end{proof}

\section{Proofs}

In this section, we detail the proof of Theorems \ref{thm1}. For $\gamma\ge2$ and $1<\gamma<\frac{9}{7}$, different entropy-flux pairs $(\eta, q)$ with different functions $g$ will be used, respectively.

\subsection{Proof of Theorem \ref{thm1}-(i)}

In this subsection, we take $g(\xi)=\frac{1}{2}|\xi|^2$.

\begin{proposition}\label{thm3}
Under the conditions of Theorem \ref{thm1}-(i), it holds that for any $t>0$ and $1<\gamma<+\infty$,
\begin{equation}\label{PQC}
\int^{+\infty}_{-\infty}(P_*+Q_*)dx+\int^{t}_{0}\int^{+\infty}_{-\infty}Q_*dx\leq C,
\end{equation}
where $P_*$ and $Q_*$ are defined as in \eqref{pQ}.

Particularly, if $2\le\gamma<+\infty$, then
\begin{equation}\label{gamma1/2}
\begin{aligned}
&\|(\rho-\bar{\rho})(\cdot, t)\|_{L^\gamma}^\gamma+\|(m-\bar{m})(\cdot, t)\|^2 \\ &+\int_{-\infty}^{+\infty}\left(\rho^{\gamma-2}+\bar{\rho}^{\gamma-2}\right)(\rho-\bar{\rho})^2 d x \leq C(1+t)^{-\frac{\gamma^2+\gamma-1}{(\gamma+1)^2}+2 \varepsilon},
\end{aligned}
\end{equation}
where the positive constant $\varepsilon$ can be arbitrarily small.
\end{proposition}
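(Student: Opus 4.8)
The plan is to run Proposition~\ref{lem51} with the mechanical--energy entropy and then feed the convexity inequality Lemma~\ref{lem1} into the decay analysis, in the spirit of \cite{GEN}. First I would specialize: with $g(\xi)=\tfrac12|\xi|^{2}$ one has $g'(\xi)=\xi$, $g''\equiv1$, and a direct calculation gives $\eta=c_{0}\big(\tfrac{m^{2}}{2\rho}+\tfrac{\kappa}{\gamma-1}\rho^{\gamma}\big)$ with $c_{0}=\int_{-1}^{1}(1-z^{2})^{\lambda}dz>0$, i.e.\ $\eta$ is a positive multiple of \eqref{etae}. Hence $\eta_{*}=c_{0}\big(\tfrac12 Q_{*}+\tfrac1{\gamma-1}P_{*}\big)$, $Q_{1*}=c_{0}Q_{*}$, $\eta_{\bar m}=c_{0}\bar u$, $(\eta_{\bar m})_{x}=c_{0}\bar u_{x}=\frac{c_{0}}{(\gamma+1)(1+t)}$, and the $z$-odd integral $\int_{-1}^{1}\bar g''z(1-z^{2})^{\lambda}dz$ vanishes, so Proposition~\ref{lem51} collapses to
$$
\frac{d}{dt}\!\int_{-\infty}^{\infty}\!\eta_{*}\,dx+\alpha c_{0}\!\int_{-\infty}^{\infty}\!Q_{*}\,dx+\frac{c_{0}}{(\gamma+1)(1+t)}\!\int_{-\infty}^{\infty}\!(P_{*}+Q_{*})\,dx=c_{0}\!\int_{-\infty}^{\infty}\!\Big[\tfrac{\bar m\bar R}{\bar\rho^{2}}(\rho-\bar\rho)-\tfrac{\bar R}{\bar\rho}(m-\bar m)\Big]dx .
$$
Near the vacuum boundary of $\bar\rho$ the coefficients $\bar u,\bar u_{x},\bar R/\bar\rho$ are read off the explicit formulas \eqref{barrho}--\eqref{ux}, which is what makes the $(1+t)^{-1}$ term nonnegative on all of $\mathbf R$.

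For \eqref{PQC} I would then discard the nonnegative left--hand terms except $\alpha c_{0}\int Q_{*}$, so that $\frac{d}{dt}\int\eta_{*}\le c_{0}(|I_{1}|+|I_{2}|)$ with $I_{1}=\int\tfrac{\bar m\bar R}{\bar\rho^{2}}(\rho-\bar\rho)$, $I_{2}=\int\tfrac{\bar R}{\bar\rho}(m-\bar m)$. Writing $\rho-\bar\rho=-y_{x}$, $m-\bar m=y_{t}$ and using $\frac{\bar R}{\bar\rho}=-\frac{\gamma x}{(\gamma+1)^{2}(1+t)^{2}}$ with $|x|\le C(1+t)^{1/(\gamma+1)}$ on $\operatorname{supp}\bar\rho$, together with $\int|\rho-\bar\rho|\le2M$, gives $|I_{1}|\le C(1+t)^{-3+2/(\gamma+1)}$, while Lemma~\ref{lem4} and Lemma~\ref{lem3} (which make $\|y\|$ bounded, $\|y_{t}(\cdot,\tau)\|^{2}$ time--integrable, $\|\rho\|_{L^{\gamma}}$ bounded) give $|I_{2}|\le C(1+\tau)^{-2+3/(2(\gamma+1))}\|y_{t}(\cdot,\tau)\|$; hence $\int_{0}^{t}(|I_{1}|+|I_{2}|)\,d\tau\le C$ for every $\gamma>1$. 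Integrating in time and using that the data hypotheses (in particular $y(\cdot,0)\in L^{2}$) make $\int\eta_{*}(\cdot,0)<\infty$ yields $\int\eta_{*}(\cdot,t)+\alpha c_{0}\int_{0}^{t}\!\int Q_{*}\le C$; since $\eta_{*}\ge c_{0}\min\{\tfrac12,\tfrac1{\gamma-1}\}(P_{*}+Q_{*})$, this is exactly \eqref{PQC}.

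For \eqref{gamma1/2} ($\gamma\ge2$) I would first reduce to a decay rate for $\int\eta_{*}$. For $\gamma\ge2$, by Taylor's theorem as in Lemma~\ref{lem2}, $(\rho^{\gamma-2}+\bar\rho^{\gamma-2})(\rho-\bar\rho)^{2}\asymp P_{*}$ and $(\rho^{\gamma-2}+\bar\rho^{\gamma-2})(\rho-\bar\rho)^{2}\ge|\rho-\bar\rho|^{\gamma}$, and a short computation from $Q_{*}=\rho(u-\bar u)^{2}$ together with $\|\bar u\|_{L^{\infty}(\operatorname{supp}\bar\rho)}\le C(1+t)^{-\gamma/(\gamma+1)}$ gives $\|m-\bar m\|^{2}\le C\int Q_{*}+C(1+t)^{-2\gamma/(\gamma+1)}$; since $\tfrac{\gamma^{2}+\gamma-1}{(\gamma+1)^{2}}<\tfrac{2\gamma}{\gamma+1}$, it then suffices to show $\int\eta_{*}(\cdot,t)\le C_{\varepsilon}(1+t)^{-(\gamma^{2}+\gamma-1)/(\gamma+1)^{2}+2\varepsilon}$. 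The crucial point is the decay of $\int P_{*}$: Lemma~\ref{lem3} and Lemma~\ref{lem2} give $\int_{0}^{\infty}\!\int\big(\rho^{\gamma+1}-\bar\rho^{\gamma+1}-(\gamma+1)\bar\rho^{\gamma}(\rho-\bar\rho)\big)dx\,d\tau\le C$, and Lemma~\ref{lem1} bounds the integrand below by $c\big((\rho^{\gamma})_{*}\big)^{(\gamma+1)/\gamma}$ with $(\rho^{\gamma})_{*}=P_{*}/\kappa\ge0$; since $(\rho^{\gamma})_{*}(\cdot,\tau)$ is supported in a set of measure $\lesssim(1+\tau)^{1/(\gamma+1)}$ (the support of $\bar\rho$ has that size and the mass of $\rho$ outside it is controlled through $\|y\|_{L^{\infty}}\le\|\rho-\bar\rho\|_{L^{1}}$ and the finite--mass structure), Jensen's inequality turns this into
$$
\int_{0}^{\infty}(1+\tau)^{-\frac1{\gamma(\gamma+1)}}\Big(\int P_{*}(\cdot,\tau)\,dx\Big)^{\frac{\gamma+1}{\gamma}}d\tau\le C .
$$
Coupling this with the quasi--monotonicity of $\int\eta_{*}$ from the first display (so that $\int P_{*}$ cannot jump upward) forces $\int P_{*}(\cdot,t)\le C_{\varepsilon}(1+t)^{-(\gamma^{2}+\gamma-1)/(\gamma+1)^{2}+\varepsilon}$ — the exponent being precisely the one at which the weighted time--integral becomes borderline divergent, whence the $\varepsilon$. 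For $\int Q_{*}$, \eqref{PQC} gives $\int_{0}^{t}\!\int Q_{*}\le C$, and with the quasi--monotonicity this yields the faster bound $\int Q_{*}(\cdot,t)\le C_{\varepsilon}(1+t)^{-1+\varepsilon}$; since $\int\eta_{*}=c_{0}(\tfrac12\int Q_{*}+\tfrac1{\gamma-1}\int P_{*})$ and the $\varepsilon$--losses compound to at most $2\varepsilon$, the required decay of $\int\eta_{*}$ follows, hence \eqref{gamma1/2}.

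I expect the main obstacle to be twofold. The structural one is the exponent mismatch: $\int\eta_{*}$ measures $\rho-\bar\rho$ in $L^{\gamma}$ whereas the dissipation supplied by Lemma~\ref{lem3} measures it in $L^{\gamma+1}$, and bridging them with the correct power is exactly what Lemma~\ref{lem1} (plus the Jensen step on a support of size $(1+t)^{1/(\gamma+1)}$) achieves — it is this extra exponent that improves $\tfrac{\gamma^{2}-1}{(\gamma+1)^{2}}$, all that the $(1+t)^{-1}$ dissipation alone would deliver, to $\tfrac{\gamma^{2}+\gamma-1}{(\gamma+1)^{2}}$. The technical one is the vacuum/free boundary of $\bar\rho$: the sign of $(\eta_{\bar m})_{x}$ in the first display, the integrations by parts defining $I_{1},I_{2}$, and the measure estimate for $\operatorname{supp}(\rho^{\gamma})_{*}$ all have to be justified through the explicit Barenblatt formulas \eqref{barrho}--\eqref{ux}, the invariant--region bound of Lemma~\ref{thm2}, and a tail estimate for $\rho$; and turning the finite weighted time--integral above into a pointwise decay rate with only an $\varepsilon$--loss is the remaining delicate step, handled by the iteration of \cite{GEN}.
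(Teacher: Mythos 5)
Your treatment of \eqref{PQC} is essentially the paper's own argument: the same specialization $g(\xi)=\tfrac12|\xi|^2$ of Proposition \ref{lem51}, the same identities $\eta_*=C_1Q_*+\tfrac{C_2}{\kappa}P_*$ and $Q_{1*}=2C_1Q_*$, and integrable bounds on the two source terms followed by Lemma \ref{lem3}; your variants of the $I_1,I_2$ estimates (pointwise bounds on $\operatorname{supp}\bar\rho$, Cauchy--Schwarz, and $\|\rho-\bar\rho\|_{L^1}\le 2M$, the latter presuming conservation of mass for the weak solution) are harmless substitutes for the $\bar\rho^{\pm\delta}$-weighted norms of Lemma \ref{lem4}.

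For \eqref{gamma1/2} you take a genuinely different route, and its central step fails as stated. You want the weighted-in-time bound $\int_0^\infty(1+\tau)^{-\frac{1}{\gamma(\gamma+1)}}\bigl(\int P_*\,dx\bigr)^{\frac{\gamma+1}{\gamma}}d\tau\le C$, and you get it from Lemma \ref{lem1} plus Jensen/H\"older on the grounds that $(\rho^\gamma)_*$ is supported in a set of measure $\lesssim(1+\tau)^{\frac{1}{\gamma+1}}$. That support claim is false: outside $\operatorname{supp}\bar\rho$ one has $(\rho^\gamma)_*=\rho^\gamma$, and nothing in the hypotheses confines $\rho$ to a set of that size (no compact support of $\rho_0$ and no finite-propagation argument is invoked). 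Controlling the mass of $\rho$ outside $\operatorname{supp}\bar\rho$ does not repair the step, because the H\"older/Jensen inequality needs a bound on the \emph{measure} of $\{\rho>0\}$, not on $\int\rho$; in particular $\int_{\mathrm{out}}\rho^\gamma$ cannot be dominated by $\bigl(\int_{\mathrm{out}}\rho^{\gamma+1}\bigr)^{\gamma/(\gamma+1)}$ without exactly such a measure factor. This is precisely where the improved exponent has to be earned, and the paper earns it differently: it multiplies the differential inequality by $(1+t)^{k(\varepsilon)}$ \emph{before} integrating, uses Lemma \ref{lem1} in the form $P_*\le C[(\rho^\gamma-\bar\rho^\gamma)(\rho-\bar\rho)]^{\gamma/(\gamma+1)}$, inserts the weights $\bar\rho^{\pm\delta}$ and evaluates $\|\bar\rho^{-\delta(\gamma+1)}\|_{L^1}$ by Lemma \ref{lem4}, and then applies Young's inequality so that the remaining factor is the spacetime dissipation $\int_0^t\int(\rho^\gamma-\bar\rho^\gamma)(\rho-\bar\rho)\,dx\,d\tau$ of Lemma \ref{lem3}; no support information on $\rho$ and no pointwise-in-time upgrade are needed.

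A second, smaller gap is the upgrade itself: what is quasi-monotone is $\int\eta_*$, i.e.\ the combination of $\int P_*$ and $\int Q_*$, so you must produce a single time $s\in[t/2,t]$ at which \emph{both} $\int P_*(s)\lesssim(1+t)^{-\frac{\gamma^2+\gamma-1}{(\gamma+1)^2}}$ and $\int Q_*(s)\lesssim t^{-1}$ hold (a pigeonhole on the two finite integrals), and you must check that the possible increase of $\int\eta_*$ on $[s,t]$, namely $\int_s^t(|I_1|+|I_2|)\,d\tau$, is itself of order $(1+t)^{-\frac{\gamma^2+\gamma-1}{(\gamma+1)^2}}$. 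The latter is in fact true (it follows from $\int_0^\infty\|y_t\|^2\,d\tau\le C$ and $\frac{\gamma^2+\gamma-1}{(\gamma+1)^2}<1$), but it is not a consequence of "$\int P_*$ cannot jump upward" as you state it, and your bookkeeping of the $\varepsilon$-losses hides exactly this verification. These points could be repaired, but the support-measure claim above is a genuine gap, so as written the proof of \eqref{gamma1/2} does not go through.
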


\begin{proof}
Since $g(\xi)=\frac{1}{2}|\xi|^2$ and $\int^{1}_{-1}z(1-z^2)^{\lambda}dz=0$, we have
\begin{equation}\label{etag2}
\eta=\rho\int^{1}_{-1} g\left(1-z^2\right)^\lambda dz=\frac{1}{2}\rho\int^{1}_{-1} \left(u^2+z^2 \rho^{2\theta}\right)\left(1-z^2\right)^\lambda dz
=C_1\frac{m^2}{\rho}+C_2\rho^{\gamma},
\end{equation}
where
$$
C_1=\frac{1}{2}\int_{-1}^1\left(1-z^2\right)^\lambda dz,
$$
and
$$
C_2=\frac{1}{2}\int_{-1}^1z^2\left(1-z^2\right)^\lambda dz.
$$
Therefore, using \eqref{etag2} and \eqref{pQ} gives
\begin{equation}\label{etaPQ}
\eta_*=C_1 Q_*+\frac{C_2}{\kappa}P_*.
\end{equation}

In addition, noting $g'(\xi)|_{\xi=u+z\rho^{\theta}}=u+z\rho^{\theta}$ and $g''(\xi)=1$,we infer
$$
\left(\eta_{\bar{m}}\right)_x=2C_1\left(\frac{\bar{m}}{\bar{\rho}}\right)_x=2C_1\bar{u}_x=\frac{2C_1}{(1+t)(1+\gamma)}, \, |x|<{\sqrt{\frac{A_0}{B_0}}(1+t)^{\frac{1}{\gamma+1}}}
$$
by \eqref{ux}, and
$$
Q_1=m\int^{1}_{-1} g'(1-z^2)^{\lambda}dz=\frac{m^2}{\rho}\int^{1}_{-1} (1-z^2)^{\lambda}dz=2C_1 \frac{m^2}{\rho}.
$$
This and \eqref{pQ} together show
$$
Q_{1^*}=2C_1Q_*,
$$
which implies that \eqref{detadt} becomes
\begin{equation}\label{detadt'}
\begin{aligned}
& \frac{d}{dt}\int_{-\infty}^{+\infty}\eta_{* }dx +2C_1\alpha\int_{-\infty}^{+\infty}Q_{*}dx+\frac{2C_1}{(1+t)(1+\gamma)}\int_{-\sqrt{\frac{A_0}{B_0}}(1+t)^{\frac{1}{\gamma+1}}}
^{\sqrt{\frac{A_0}{B_0}}(1+t)^{\frac{1}{\gamma+1}}}(P_*+Q_*)dx\\
=&2C_1 \int_{-\infty}^{+\infty}\left\{\bar{u}\left[\bar{\rho}^{\delta}\frac{\bar{R}}{\bar{\rho}}\right]
\bar{\rho}^{-\delta}\left(\rho-\bar{\rho}\right)-\left[\bar{\rho}^{\delta}\frac{\bar{R}}{\bar{\rho}}\right]
\bar{\rho}^{-\delta}\left(m-\bar{m}\right)\right\}dx\\
=:&I_1+I_2.
\end{aligned}
\end{equation}
Here, the representation $\bar{\rho}^\delta\left(\frac{\bar{R}}{\bar{\rho}}\right)$ is used to avoid $\delta$ function for $\frac{\bar{R}}{\bar{\rho}}$ on the interface between the gas and vacuum, and will be helpful in the later reasoning.

Then, it follows from \eqref{rhobeta} and \eqref{rhodelta} that
\begin{equation}\label{I1}
\begin{aligned}
I_1&=2C_1\int_{-\infty}^{+\infty}  \bar{u}\left[\bar{\rho}^\delta\left(\frac{\bar{R}}{\bar{\rho}}\right)\right] \bar{\rho}^{-\delta}(\rho-\bar{\rho})dx\\
&\leq C\left\|\bar{\rho}^\delta \frac{\bar{R}}{\bar{\rho}}\right\|\left\|\bar{\rho}^{-\delta} \bar{u}\right\|\|\rho-\bar{\rho}\|_{L^{\infty}}\\
&\leq C(1+t)^{-\frac{3\gamma}{\gamma+1}},
\end{aligned}
\end{equation}
and
\begin{equation}\label{I2}
\begin{aligned}
I_2&=-2C_1\int_{-\infty}^{+\infty}  \left[\bar{\rho}^\delta\left(\frac{\bar{R}}{\bar{\rho}}\right)\right] \bar{\rho}^{-\delta}(m-\bar{m})dx\\
&\leq \left\|\bar{\rho}^\delta \frac{\bar{R}}{\bar{\rho}}\right\|_{L^4}\left\|\bar{\rho}^{-\delta}\right\|_{L^4}\left\|y_t\right\|\\
&\leq C(1+t)^{-\frac{4\gamma+1}{2(\gamma+1)}}\|y_t\|\\
&\leq C(1+t)^{-1}\|y_t\|^2+C(1+t)^{-\frac{3\gamma}{\gamma+1}}.
\end{aligned}
\end{equation}
Substituting the above two inequalities into \eqref{detadt'}, then integrating over $(0,t)$, and using Lemma \ref{lem3}, we obtain
\begin{align*}
&\int_{-\infty}^{+\infty}\eta_{* }dx +C_1\int^{t}_{0}\int_{-\infty}^{+\infty}Q_{*}dxd\tau\\
\leq &C+C\int^{t}_{0}(1+\tau)^{-\frac{3\gamma}{\gamma+1}}d\tau+C\int^{t}_{0}(1+\tau)^{-1}\|y_{\tau}\|^2d\tau\leq C.
\end{align*}
This, in combination with \eqref{etaPQ} and $P_*\ge0, Q_*\ge0$, gives \eqref{PQC}.

As for the second conclusion, setting
\begin{equation}\label{kvar}
k(\varepsilon)=\frac{\gamma^2+\gamma-1}{(\gamma+1)^2}-2\varepsilon,
\end{equation}
and multiplying \eqref{detadt'} with $(1+t)^{k(\varepsilon)}$, we have
\begin{align*}
&\frac{d}{d t}\left[(1+t)^{k(\varepsilon)} \int_{-\infty}^{+\infty} \eta_*(x, t) d x\right] +2C_1\alpha(1+t)^{k(\varepsilon)}\int_{-\infty}^{+\infty}Q_* dx\\
\leq&(1+t)^{k(\varepsilon)}(I_1+I_2)+k(\varepsilon)(1+t)^{k(\varepsilon)-1} \int_{-\infty}^{+\infty} \eta_*(x, t) d x,\\
\leq&(1+t)^{k(\varepsilon)-\frac{3\gamma}{\gamma+1}}+C(1+t)^{k(\varepsilon)-1}\|y_t\|^2+k(\varepsilon)(1+t)^{k(\varepsilon)-1} \int_{-\infty}^{+\infty} \eta_*(x, t) d x,
\end{align*}
by \eqref{I1} and \eqref{I2}. We then integrate over $(0,t)$ and use Lemma \ref{lem3}, giving
\begin{equation}\label{etakv}
\begin{aligned}
&(1+t)^{k(\varepsilon)} \int_{-\infty}^{+\infty} \eta_*(x, t)dx+C\int^{t}_{0}\int_{-\infty}^{+\infty}(1+\tau)^{k(\varepsilon)}Q_*dxd\tau\\
\leq& C+C\int^{t}_{0}\int_{-\infty}^{+\infty}(1+\tau)^{k(\varepsilon)-1}(P_*+Q_*)dxd\tau\\
\leq& C+C\int^{t}_{0}\int_{-\infty}^{+\infty}(1+\tau)^{k(\varepsilon)-1}\bar{\rho}^{\delta}P_*\bar{\rho}^{-\delta}dxd\tau,
\end{aligned}
\end{equation}
due to \begin{align*}
k(\varepsilon)-\frac{3\gamma}{\gamma+1}=-\frac{2\gamma^2+2\gamma+1}{(\gamma+1)^2}<-1,
\end{align*}
\eqref{PQC}, and $P_*,Q_*\ge0$.

Next, from Lemmas \ref{lem1} and \ref{lem2}, and \eqref{pQ}, we derive
\begin{align*}
P_*=p(\rho)-p(\bar{\rho})-p'(\bar{\rho})(\rho-\bar{\rho})&\leq C\left[\rho^{\gamma+1}-\bar{\rho}^{\gamma+1}-(\gamma+1)\bar{\rho}^{\gamma}\left(\rho-\bar{\rho}\right)\right]^{\frac{\gamma}{\gamma+1}}\\
&\leq C\left[\left(\rho^{\gamma}-\bar{\rho}^{\gamma}\right)\left(\rho-\bar{\rho}\right)\right]^{\frac{\gamma}{\gamma+1}}.
\end{align*}
Hence, it follows from Lemma \ref{lem4} that
\begin{align*}
\int_{-\infty}^{+\infty}\bar{\rho}^{\delta}P_*\bar{\rho}^{-\delta}dx&\leq C\int_{-\infty}^{+\infty}\bar{\rho}^{\delta}\left[\left(\rho^{\gamma}-\bar{\rho}^{\gamma}\right)
\left(\rho-\bar{\rho}\right)\right]^{\frac{\gamma}{\gamma+1}}\bar{\rho}^{-\delta}dx\\
&\leq C\left(\int_{-\infty}^{+\infty}\bar{\rho}^{\frac{\gamma+1}{\gamma}\delta}
\left[\left(\rho^{\gamma}-\bar{\rho}^{\gamma}\right)\left(\rho-\bar{\rho}\right)\right]  dx\right)^{\frac{\gamma}{\gamma+1}}    \left(\int_{-\infty}^{+\infty}\bar{\rho}^{-\delta(\gamma+1)}dx\right)^{\frac{1}{\gamma+1}}\\
&\leq C||\bar{\rho}^{\frac{\gamma+1}{\gamma}\delta}||^{\frac{\gamma}{\gamma+1}}_{L^{\infty}}\left(\int_{-\infty}^{+\infty}\left(\rho^{\gamma}-
\bar{\rho}^{\gamma}\right)\left(\rho-\bar{\rho}\right)  dx\right)^{\frac{\gamma}{\gamma+1}}\cdot (1+t)^{\frac{\delta+1}{(\gamma+1)^2}}\\
&\leq C(1+t)^{\frac{1}{(\gamma+1)^2}}\left(\int_{-\infty}^{+\infty}\left(\rho^{\gamma}-\bar{\rho}^{\gamma}\right)\left(\rho-\bar{\rho}\right)  dx\right)^{\frac{\gamma}{\gamma+1}},
\end{align*}
where the last inequality is from \eqref{rhobeta'}. As a result, we deduce that
\begin{align*}
&\int^{t}_{0}\int_{-\infty}^{+\infty}(1+\tau)^{k(\varepsilon)-1}\bar{\rho}^{\delta}P_*\bar{\rho}^{-\delta}dxd\tau\\
\leq &C\int^{t}_{0}(1+\tau)^{k(\varepsilon)-1+\frac{1}{(\gamma+1)^2}}
\left(\int_{-\infty}^{+\infty}\left(\rho^{\gamma}-\bar{\rho}^{\gamma}\right)\left(\rho-\bar{\rho}\right)    dx\right)^{\frac{\gamma}{\gamma+1}}d\tau\\
\leq &C\int^{t}_{0}(1+\tau)^{\left[k(\varepsilon)-1+\frac{1}{(\gamma+1)^2}\right]\cdot(\gamma+1)}d\tau+
\int^{t}_{0}\int_{-\infty}^{+\infty}\left(\rho^{\gamma}-\bar{\rho}^{\gamma}\right)\left(\rho-\bar{\rho}\right)   dxd\tau\\
\leq& C\int^{t}_{0}(1+\tau)^{-1-2(\gamma+1)\varepsilon}d\tau+C\leq C,
\end{align*}
by Lemma \ref{lem3} again.

Thus, combining the above estimate and \eqref{etakv}, we see that
\begin{equation*}
\int_{-\infty}^{+\infty}\eta_* dx\leq C(1+t)^{-k(\varepsilon)}.
\end{equation*}
On one hand, this and \eqref{etaPQ} together gives, by Lemma \ref{lem2},
\begin{equation}\label{rhogamma}
\|(\rho-\bar{\rho})(\cdot, t)\|_{L^\gamma}^\gamma+\int_{-\infty}^{+\infty}\left(\rho^{\gamma-2}+\bar{\rho}^{\gamma-2}\right)\left(\rho-\bar{\rho}\right)^2 d x \leq C(1+t)^{-\frac{\gamma^2+\gamma-1}{(\gamma+1)^2}+2 \varepsilon}.
\end{equation}
On the other hand, it follows from Lemma \ref{thm2} that, with a constant $\bar{C}_3>0$,
\begin{equation}\label{barC3}
\begin{aligned}
\int_{-\infty}^{+\infty}(m-\bar{m})^2 \mathrm{~d} x & \leq C \int_{-\infty}^{+\infty}\left(m^2+\bar{m}^2\right) d x \\
& \leq \bar{C}_3 \int_{-\infty}^{+\infty} \frac{m^2}{\rho}dx+\int_{-\infty}^{+\infty}\bar{u}^2\bar{\rho}^2 d x \\
& \leq \bar{C}_3 \int_{-\infty}^{+\infty} \frac{m^2}{\rho}dx+C(1+t)^{-\frac{2\gamma+1}{\gamma+1}},
\end{aligned}
\end{equation}
where the last inequality is from Lemma \ref{lem4} with $\beta_1=\beta_2=2$ and $p=1$.

Then, by virtue of \eqref{PQC}, Young's inequality and Lemma \ref{lem4} again, it holds that
\begin{align*}
\int_{-\infty}^{+\infty} \frac{m^2}{\rho}dx&=\int_{-\infty}^{+\infty}\left[Q_*+\frac{\bar{m}^2}{\bar{\rho}}-\frac{\bar{m}^2}{\bar{\rho}^2}(\rho-\bar{\rho})+
\frac{2\bar{m}}{\bar{\rho}}(m-\bar{m})\right]dx\\
&\leq \int_{-\infty}^{+\infty}Q_*dx+\left\|\bar{u}^{2}\bar{\rho}\right\|_{L^{1}}+
\left\|\bar{u}^2\right\|_{L^{\infty}}\|\rho-\bar{\rho}\|_{L^1}+2\left\|\bar{u}\right\|\|m-\bar{m}\|\\
&\leq C(1+t)^{-k(\varepsilon)}+C(1+t)^{-\frac{2\gamma-1}{\gamma+1}}+\frac{1}{2\bar{C}_3}\|m-\bar{m}\|^2\\
&\leq C(1+t)^{-k(\varepsilon)}+\frac{1}{2\bar{C}_3}\|m-\bar{m}\|^2.
\end{align*}
Thus, by \eqref{barC3} we obtain
\begin{align*}
\int_{-\infty}^{+\infty}(m-\bar{m})^2 dx &\leq \bar{C}_3 \int_{-\infty}^{+\infty} \frac{m^2}{\rho}dx+C(1+t)^{-\frac{2\gamma+1}{\gamma+1}}\\
&\leq C(1+t)^{-k(\varepsilon)}+\frac{1}{2}\|m-\bar{m}\|^2,
\end{align*}
which means that
$$
\int_{-\infty}^{+\infty}(m-\bar{m})^2 dx \leq C(1+t)^{-k(\varepsilon)}.
$$
Therefore, using \eqref{kvar} and \eqref{rhogamma} together justifies Proposition \ref{thm3}.

\end{proof}

In view of Lemma \ref{lem2}, Lemma \ref{lem3}, \eqref{rhobeta} and \eqref{gamma1/2}, we obtain the $L^1$ convergence rate on density as follows
\begin{align*}
\int_{-\infty}^{+\infty}|\rho-\bar{\rho}| d x &\leq\left(\int_{-\infty}^{+\infty} \bar{\rho}^{\gamma-2}|\rho-\bar{\rho}|^2 d x\right)^{\frac{1}{2}}\left(\int_{-\infty}^{+\infty} \bar{\rho}^{2-\gamma} d x\right)^{\frac{1}{2}}\\
&\leq C(1+t)^{-\frac{\gamma^2+\gamma-1}{2(\gamma+1)^2}+ \varepsilon}(1+t)^{\frac{\gamma-1}{2(\gamma+1)}}\\
&\leq C(1+t)^{-\frac{\gamma}{2(\gamma+1)^2}+\varepsilon}.
\end{align*}
This combined with \eqref{gamma1/2} completes the proof of Theorem \ref{thm1}-(i).

\subsection{Proof of Theorem \ref{thm1}-(ii)}

In this subsection, we take $g(\xi)=\frac{(\gamma-1)^2}{2\gamma(\gamma+1)}|\xi|^{\frac{2\gamma}{\gamma-1}}$. Then
$
g'(\xi)=\frac{2\gamma}{\gamma-1}\frac{g(\xi)}{\xi},
$
and hence
\begin{align*}
\eta(\rho, m)&=\rho \int^{1}_{-1}  g\left(1-z^2\right)^\lambda \mathrm{d} z\\
&=\frac{\gamma-1}{2\gamma}\int^{1}_{-1} g'\rho\left(u+z \rho^\theta\right)\left(1-z^2\right)^\lambda \mathrm{d}z \\ &=\frac{\gamma-1}{2\gamma}\left(m\int^{1}_{-1}g'\left(1-z^2\right)^\lambda \mathrm{d} z+\rho^{\theta+1}\int^{1}_{-1}g'z\left(1-z^2\right)^\lambda \mathrm{d} z\right)\\
&=:\frac{\gamma-1}{2\gamma}(A+B).
\end{align*}
Thus
\begin{equation}\label{A*B*}
\eta_*=\frac{\gamma-1}{2\gamma}(A_*+B_*).
\end{equation}
From the definition of $Q_1$ (in Proposition \ref{lem51}), we find that $Q_1=A$ and \eqref{detadt} becomes
\begin{equation}\label{J123}
\begin{aligned}
& \frac{d}{dt}\int_{-\infty}^{+\infty}\eta_{* }dx +\alpha\int_{-\infty}^{+\infty}A_*dx\\
=&\int_{-\infty}^{+\infty}\left\{\bar{u}\left[\bar{\rho}^\delta\left(\frac{\bar{R}}{\bar{\rho}}\right)\right] \bar{\rho}^{-\delta}(\rho-\bar{\rho})-\left[\bar{\rho}^\delta\left(\frac{\bar{R}}{\bar{\rho}}\right)\right] \bar{\rho}^{-\delta}(m-\bar{m})\right\}\int^{1}_{-1}\bar{g}''\left(1-z^2\right)^{\lambda}dzdx\\
&-\theta\int_{-\infty}^{+\infty}\left[\bar{\rho}^\delta\left(\frac{\bar{R}}{\bar{\rho}}\right)\right] \bar{\rho}^{-\delta}(\rho-\bar{\rho})\int^{1}_{-1}
\bar{g}''z\bar{\rho}^{\theta}(1-z^2)^{\lambda}dzdx\\
&-\int_{-\infty}^{+\infty}
\left[\bar{\rho}^{\delta}(\eta_{\bar{m}})_x\right]\bar{\rho}^{-\delta}(P_*+Q_*)dx\\
=:&J_1+J_2+J_3.
\end{aligned}
\end{equation}
\subsubsection{Estimates of $J_1$, $J_2$, and $J_3$}
First, since $g''(\xi)=|\xi|^{\frac{2}{\gamma-1}}$ and $\lambda=\frac{3-\gamma}{2(\gamma-1)}>0$,  we have
\begin{align*}
\left|\int^{1}_{-1}\bar{g}''\left(1-z^2\right)^{\lambda}dz\right|\leq \int^{1}_{-1} \left|\bar{g}''\left(1-z^2\right)^{\lambda}\right|dz\leq C\int^{1}_{-1} |\bar{u}+z\bar{\rho}^\theta|^{\frac{2}{\gamma-1}}dz\leq C(|\bar{u}|^{\frac{2}{\gamma-1}}+|\bar{\rho}|),
\end{align*}
and so
\begin{equation}\label{J10}
\begin{aligned}
J_1\leq&\bigg|\int_{-\infty}^{+\infty}\left\{\bar{u}\left[\bar{\rho}^\delta\left(\frac{\bar{R}}{\bar{\rho}}\right)\right] \bar{\rho}^{-\delta}(\rho-\bar{\rho})-\left[\bar{\rho}^\delta\left(\frac{\bar{R}}{\bar{\rho}}\right)\right] \bar{\rho}^{-\delta}(m-\bar{m})\right\}\int^{1}_{-1}\bar{g}''\left(1-z^2\right)^{\lambda}dzdx\bigg|\\
\leq &\int_{-\infty}^{+\infty}\left\{\left|\bar{u}\left[\bar{\rho}^\delta\left(\frac{\bar{R}}{\bar{\rho}}\right)\right] \bar{\rho}^{-\delta}(\rho-\bar{\rho})\right|+\left|\left[\bar{\rho}^\delta\left(\frac{\bar{R}}{\bar{\rho}}\right)\right] \bar{\rho}^{-\delta}(m-\bar{m})\right|\right\}\left|\int^{1}_{-1}\bar{g}''\left(1-z^2\right)^{\lambda}\right|dzdx\\
\leq &C\int_{-\infty}^{+\infty}\left\{\left|\bar{u}\left[\bar{\rho}^\delta\left(\frac{\bar{R}}{\bar{\rho}}\right)\right] \bar{\rho}^{-\delta}(\rho-\bar{\rho})\right|+\left|\left[\bar{\rho}^\delta\left(\frac{\bar{R}}{\bar{\rho}}\right)\right] \bar{\rho}^{-\delta}(m-\bar{m})\right|\right\}(|\bar{u}|^{\frac{2}{\gamma-1}}+|\bar{\rho}|)dx\\
=&:J^{1}_{1}+J^{2}_{1}+J^{3}_{1}+J^{4}_{1}.
\end{aligned}
\end{equation}
Applying $\rm{H\ddot{o}lder}$'s inequality with exponents 2, 2, $\infty$, and using \eqref{rhobeta} and \eqref{rhodelta}, we obtain
\begin{equation}\label{J11}
\begin{aligned}
J^{1}_{1}&=C\int_{-\infty}^{+\infty}\left\{\left|\bar{u}\left[\bar{\rho}^\delta\left(\frac{\bar{R}}{\bar{\rho}}\right)\right] \bar{\rho}^{-\delta}(\rho-\bar{\rho})\right|\right\}|\bar{u}|^{\frac{2}{\gamma-1}}dx\\
&\leq C\left\|\bar{\rho}^\delta \frac{\bar{R}}{\bar{\rho}}\right\|\left\|\bar{\rho}^{-\delta} \bar{u}^{\frac{\gamma+1}{\gamma-1}}\right\|\|\rho-\bar{\rho}\|_{L^{\infty}}\\
&\leq C (1+t)^{-\frac{2(\delta+2 \gamma+1)-1}{2(\gamma+1)}}\cdot(1+t)^{-\frac{2\left(\frac{\gamma+1}{\gamma-1}\gamma-\delta\right)-1}{2(r+1)}}\\
&\leq C(1+t)^{-\frac{(3\gamma-1)\gamma}{\gamma^2-1}},
\end{aligned}
\end{equation}
and
\begin{equation}\label{J12}
\begin{aligned}
J^{2}_{1}&=C\int_{-\infty}^{+\infty}\left\{\left|\bar{u}\left[\bar{\rho}^\delta\left(\frac{\bar{R}}{\bar{\rho}}\right)\right] \bar{\rho}^{-\delta}(\rho-\bar{\rho})\right|\right\}|\bar{\rho}|dx\\
&\leq C\left\|\bar{\rho}^\delta \frac{\bar{R}}{\bar{\rho}}\right\|\left\|\bar{\rho}^{1-\delta} \bar{u}\right\|\|\rho-\bar{\rho}\|_{L^{\infty}}\\
&\leq C (1+t)^{-\frac{2(\delta+2 \gamma+1)-1}{2(\gamma+1)}}\cdot(1+t)^{-\frac{2(\gamma+1-\delta)-1}{2(r+1)}}\\
&\leq C (1+t)^{-\frac{3\gamma+1}{\gamma+1}}.
\end{aligned}
\end{equation}
Again, applying $\rm{H\ddot{o}lder}$'s inequality with exponents 4, 4, 2, and Lemma \ref{lem4}, we infer
\begin{equation}\label{J13}
\begin{aligned}
J^{3}_{1}&=C\int_{-\infty}^{+\infty}\left[\bar{\rho}^\delta\left(\frac{\bar{R}}{\bar{\rho}}\right)\right] \bar{\rho}^{-\delta}|m-\bar{m}||\bar{u}|^{\frac{2}{\gamma-1}}dx\\
&\leq C\left\|\bar{\rho}^\delta \frac{\bar{R}}{\bar{\rho}}\right\|_{L^4}\left\|\bar{\rho}^{-\delta}
\bar{u}^{\frac{2}{\gamma-1}}\right\|_{L^4}\left\|y_t\right\|\\
&\leq C(1+t)^{-\frac{4(\delta+2 \gamma+1)-1}{4(\gamma+1)}}\cdot(1+t)^{-\frac{4\left(\frac{2\gamma}{\gamma-1}-\delta\right)-1}{4(r+1)}}||y_t||\\
&\leq C(1+t)^{-\frac{4\gamma^2+\gamma-1}{2(\gamma^2-1)}}||y_t||\\
&\leq C(1+t)^{-\frac{3\gamma^2+\gamma}{\gamma^2-1}}+ C(1+t)^{-1}\|y_t\|^2,
\end{aligned}
\end{equation}
where the last inequality is from Young's inequality, and
\begin{equation}\label{J14}
\begin{aligned}
J^{4}_{1}&=C\int_{-\infty}^{+\infty}\left[\bar{\rho}^\delta\left(\frac{\bar{R}}{\bar{\rho}}\right)\right] \bar{\rho}^{-\delta}|m-\bar{m}|\bar{\rho}dx\\
&\leq C\left\|\bar{\rho}^\delta \frac{\bar{R}}{\bar{\rho}}\right\|_{L^4}\left\|\bar{\rho}^{1-\delta}\right\|_{L^4}\left\|y_t\right\|\\
&\leq C(1+t)^{-\frac{4(\delta+2 \gamma+1)-1}{4(\gamma+1)}}\cdot(1+t)^{-\frac{4(1-\delta)-1}{4(r+1)}}\left\|y_t\right\|\\
&\leq C(1+t)^{-\frac{4\gamma+3}{2(\gamma+1)}}||y_t||\\
&\leq C(1+t)^{-\frac{3\gamma+2}{\gamma+1}}+ C(1+t)^{-1}\|y_t\|^2.
\end{aligned}
\end{equation}
With $\gamma\in(1,2]$, one has
\begin{align*}
\frac{(3\gamma-1)\gamma}{\gamma^2-1}-\frac{3\gamma+1}{\gamma+1}=\frac{(3\gamma-1)\gamma-(3\gamma+1)(\gamma-1)}{\gamma^2-1}=\frac{1}{\gamma-1}>0.
\end{align*}
Therefore, it follows from \eqref{J10}-\eqref{J14} that
\begin{equation}\label{J1}
J_1\leq  C (1+t)^{-\frac{3\gamma+1}{\gamma+1}}+C(1+t)^{-1}\|y_t\|^2.
\end{equation}

As for $J_2$, from
$$
g''(\xi)=\frac{\gamma+1}{\gamma-1}\frac{g'(\xi)}{\xi},
$$
we have
\begin{align*}
\int^{1}_{-1}\bar{g}''z\bar{\rho}^{\theta}\left(1-z^2\right)^{\lambda}dz&=\int^{1}_{-1}\bar{g}''(\bar{u}+z\bar{\rho}^{\theta}-\bar{u})
\left(1-z^2\right)^{\lambda}dz\\
&=\frac{\gamma+1}{\gamma-1}\int^{1}_{-1}\bar{g}'\left(1-z^2\right)^{\lambda}dz-\bar{u}\int^{1}_{-1}\bar{g}''\left(1-z^2\right)^{\lambda}dz.
\end{align*}
Consequently,
\begin{align*}
\bigg|\int^{1}_{-1}\bar{g}''z\bar{\rho}^{\theta}\left(1-z^2\right)^{\lambda}dz\bigg|&\leq C\int^{1}_{-1}\bigg|\bar{g}'\left(1-z^2\right)^{\lambda}\bigg|dz+C\bigg|\bar{u}\int^{1}_{-1}\bar{g}''\left(1-z^2\right)^{\lambda}\bigg|dz\\
&\leq C\left(|\bar{u}|^{\frac{\gamma+1}{\gamma-1}}+\bar{\rho}^{\frac{\gamma+1}{2}}\right)
+C|\bar{u}|\left(|\bar{u}|^{\frac{2}{\gamma-1}}+\bar{\rho}\right)\\
&\leq C\left(|\bar{u}|^{\frac{\gamma+1}{\gamma-1}}+\bar{\rho}^{\frac{\gamma+1}{2}}\right),
\end{align*}
and
\begin{equation}\label{J20}
\begin{aligned}
J_2\leq&\theta\bigg|\int_{-\infty}^{+\infty}\left[\bar{\rho}^\delta\left(\frac{\bar{R}}{\bar{\rho}}\right)\right] \bar{\rho}^{-\delta}(\rho-\bar{\rho})\int^{1}_{-1}\bar{g}''z\bar{\rho}^{\theta}\left(1-z^2\right)^{\lambda}dzdx\bigg|\\
\leq& \int_{-\infty}^{+\infty}\left|\left[\bar{\rho}^\delta\left(\frac{\bar{R}}{\bar{\rho}}\right)\right] \bar{\rho}^{-\delta}(\rho-\bar{\rho})\right|\left|\int^{1}_{-1}\bar{g}''z\bar{\rho}^{\theta}\left(1-z^2\right)^{\lambda}dz\right|dx\\
\leq& \int_{-\infty}^{+\infty}\left|\left[\bar{\rho}^\delta\left(\frac{\bar{R}}{\bar{\rho}}\right)\right] \bar{\rho}^{-\delta}(\rho-\bar{\rho})\right|\left(|\bar{u}|^{\frac{\gamma+1}{\gamma-1}}+\bar{\rho}^{\frac{\gamma+1}{2}}\right)dx\\
\leq& C(J^{1}_{2}+J_{2}^{1}).
\end{aligned}
\end{equation}
Then, we apply Lemma \ref{lem4} and $\rm{H\ddot{o}lder}$'s inequality (with exponents $\frac{2(\gamma+1)}{\gamma}, \frac{2(\gamma+1)}{\gamma}$ and $\gamma+1$) to show
\begin{equation*}
\begin{aligned}
J^{1}_{2}=&\int_{-\infty}^{+\infty}\left||\bar{u}|^{\frac{\gamma+1}{\gamma-1}}\bar{\rho}^{-\delta}\left[\bar{\rho}^\delta\left(\frac{\bar{R}}
{\bar{\rho}}\right)\right] (\rho-\bar{\rho})\right|dx\\
\leq&C||\bar{u}^{\frac{\gamma+1}{\gamma-1}}\bar{\rho}^{-\delta}||_{L^{\frac{2(\gamma+1)}{\gamma}}}\left\|\bar{\rho}^\delta \frac{\bar{R}}{\bar{\rho}}\right\|_{L^{\frac{2(\gamma+1)}{\gamma}}}||\rho-\bar{\rho}||_{L^{\gamma+1}}\\
\leq& C(1+t)^{-\frac{2(\gamma+1)\left(\frac{\gamma+1}{\gamma-1}\gamma-\delta\right)-\gamma}{2(\gamma+1)^2}}
\cdot(1+t)^{-\frac{2(\gamma+1)(\delta+2\gamma+1)-\gamma}{2(\gamma+1)^2}}||\rho-\bar{\rho}||_{L^{\gamma+1}}\\
\leq &C(1+t)^{-\frac{3\gamma^3+2\gamma^2-1}{(\gamma+1)^2(\gamma-1)}}||\rho-\bar{\rho}||_{L^{\gamma+1}},
\end{aligned}
\end{equation*}
and
\begin{equation*}
\begin{aligned}
J^{2}_{2}=&\int_{-\infty}^{+\infty}\left|\bar{\rho}^{\theta+1-\delta}\left[\bar{\rho}^\delta\left(\frac{\bar{R}}
{\bar{\rho}}\right)\right] (\rho-\bar{\rho})\right|dx\\
\leq&C||\bar{\rho}^{\theta+1-\delta}||_{L^{\frac{2(\gamma+1)}{\gamma}}}\left\|\bar{\rho}^\delta \frac{\bar{R}}{\bar{\rho}}\right\|_{L^{\frac{2(\gamma+1)}{\gamma}}}||\rho-\bar{\rho}||_{L^{\gamma+1}}\\
\leq &C(1+t)^{-\frac{2(\gamma+1)(\theta+1-\delta)-\gamma}{2(\gamma+1)^2}}
\cdot(1+t)^{-\frac{2(\gamma+1)(\delta+2\gamma+1)-\gamma}{2(\gamma+1)^2}}||\rho-\bar{\rho}||_{L^{\gamma+1}}\\
\leq &C(1+t)^{-\frac{5\gamma^2+6\gamma+3}{2(\gamma+1)^2}}||\rho-\bar{\rho}||_{L^{\gamma+1}}.
\end{aligned}
\end{equation*}
This and \eqref{J20} enable us to conclude
\begin{equation}\label{J2}
J_2\leq C(1+t)^{-\frac{5\gamma^2+6\gamma+3}{2(\gamma+1)^2}}||\rho-\bar{\rho}||_{L^{\gamma+1}},
\end{equation}
due to
\begin{align*}
\frac{3\gamma^3+2\gamma^2-1}{(\gamma+1)^2(\gamma-1)}-\frac{5\gamma^2+6\gamma+3}{2(\gamma+1)^2}=&
\frac{2(3\gamma^3+2\gamma^2-1)-(5\gamma^2+6\gamma+3)(\gamma-1)}{2(\gamma+1)^2(\gamma-1)}\\
=&\frac{\gamma^3+3\gamma^2+3\gamma+1}{2(\gamma+1)^2(\gamma-1)}>0.
\end{align*}

As for $J_3$, since $\bar{g}''=|\bar{u}+z\bar{\rho}^{\theta}|^{\frac{2}{\gamma-1}}\ge0$, we first calculate $-(\eta_{\bar{m}})_x$ as follows
\begin{equation}\label{etamx}
\begin{aligned}
-(\eta_{\bar{m}})_x&=- \int_{-1}^1 \bar{g}''\left[\left(\frac{\bar{m}}{\bar{\rho}}\right)_x+
z\theta\bar{\rho}^{\theta-1}\bar{\rho}_x\right]\left(1-z^2\right)^\lambda \mathrm{d}z\\
&=-\int_{-1}^1 \bar{g}''\left[\frac{1}{(1+t)(\gamma+1)}+z\theta\bar{\rho}^{\theta-1}\bar{\rho}_x\right]\left(1-z^2\right)^\lambda \mathrm{d}z\\
&\leq -\theta\bar{\rho}^{\theta-1}\bar{\rho}_x\int_{-1}^1 \bar{g}''z\left(1-z^2\right)^\lambda \mathrm{d}z\\
&=\frac{\theta}{2(\lambda+1)}\bar{\rho}^{\theta-1}\bar{\rho}_x\bar{g}''\left(1-z^2\right)^{\lambda+1}\big|^{1}_{-1} -\frac{\theta}{2(\lambda+1)}\bar{\rho}^{2\theta-1}\bar{\rho}_x\int_{-1}^1 \bar{g}^{(3)}\left(1-z^2\right)^{\lambda+1}dz\\
&=\frac{\theta}{2\gamma(\lambda+1)}\frac{\bar{m}}{\bar{\rho}}\int_{-1}^1 \bar{g}^{(3)}\left(1-z^2\right)^{\lambda+1}dz,
\end{aligned}
\end{equation}
by $\bar{m}=-(\bar{\rho}^{\gamma})_x$. Then, using
$
\bar{g}^{(3)}(\xi)=\frac{2}{\gamma-1}\frac{\bar{g}''(\xi)}{\xi},
$
we derive that
\begin{align*}
&\frac{\bar{m}}{\bar{\rho}}\int_{-1}^1 \bar{g}^{(3)}\left(1-z^2\right)^{\lambda+1}dz\\
=&\int_{-1}^1 \bar{g}^{(3)}\left(\frac{\bar{m}}{\bar{\rho}}+
z\bar{\rho}^{\theta}\right)\left(1-z^2\right)^{\lambda+1}dz-\int_{-1}^1 \bar{g}^{(3)}z\bar{\rho}^{\theta}\left(1-z^2\right)^{\lambda+1}dz\\
=&\frac{2}{\gamma-1}\int_{-1}^1 \bar{g}''\left(1-z^2\right)^{\lambda+1}dz-\bar{g}''z\left(1-z^2\right)^{\lambda+1}\big|^{1}_{-1}\\
&+\int_{-1}^1\bar{g}''\left(1-z^2\right)^{\lambda+1}dz-\int_{-1}^1\bar{g}''2(\lambda+1)\left(1-z^2\right)^{\lambda}z^2dz\\
=&\frac{\gamma+1}{\gamma-1}\int_{-1}^1 \bar{g}''\left[\left(1-z^2\right)^{\lambda+1}-\left(1-z^2\right)^{\lambda}z^2\right]dz.
\end{align*}
Hence, it follows from \eqref{etamx} that
\begin{align*}
-(\eta_{\bar{m}})_x\leq&\frac{\theta}{2\gamma(\lambda+1)}\frac{\bar{m}}{\bar{\rho}}\int_{-1}^1 \bar{g}^{(3)}\left(1-z^2\right)^{\lambda+1}dz\\
=&\frac{\gamma-1}{2\gamma}\int_{-1}^1 \bar{g}''\left[\left(1-z^2\right)^{\lambda+1}-\left(1-z^2\right)^{\lambda}z^2\right]dz,
\end{align*}
which implies
\begin{equation}\label{J30}
J_3=-\int_{-\infty}^{+\infty}(\eta_{\bar{m}})_x(P_*+Q_*)dx\leq C\int_{-\infty}^{+\infty}(P_*+Q_*)\int_{-1}^1 \bar{g}''[\left(1-z^2\right)^{\lambda+1}-\left(1-z^2\right)^{\lambda}z^2]dzdx,
\end{equation}
by $P_*\ge0$ and $Q_*\ge0$.

By Taylor's Theorem \ref{lem64}, it holds that
$$
\bar{g}''(\bar{u}+z\bar{\rho}^{\theta})=\bar{g}''(z\bar{\rho}^{\theta})+\bar{g}^{(3)}(z\bar{\rho}^{\theta})\bar{u}+\int^{1}_{0}(1-s)\bar{g}^{(4)}
(s\bar{u}+z\bar{\rho}^{\theta})\bar{u}^2ds.
$$
Based on the fact that
\begin{align*}
&\int_{-1}^1 \bar{g}^{(3)}(z\bar{\rho}^{\theta})u\left[\left(1-z^2\right)^{\lambda+1}-\left(1-z^2\right)^{\lambda}z^2\right]dz\\
=&\frac{2}{\gamma-1}\bar{\rho}^{\theta}\bar{u}\int_{-1}^1 |z\bar{\rho}|^{\frac{4-2\gamma}{\gamma-1}}z\left[\left(1-z^2\right)^{\lambda+1}-\left(1-z^2\right)^{\lambda}z^2\right]dz=0,
\end{align*}
and
\begin{align*}
&\int_{-1}^1 \bar{g}''(z\bar{\rho}^{\theta})\left[\left(1-z^2\right)^{\lambda+1}-\left(1-z^2\right)^{\lambda}z^2\right]dz\\
=&\bar{\rho}\int_{-1}^1 |z|^{\frac{2}{\gamma-1}}\left[\left(1-z^2\right)^{\lambda+1}-\left(1-z^2\right)^{\lambda}z^2\right]dz\\
=&\bar{\rho}\int_{-1}^1 \left[|z|^{\frac{2}{\gamma-1}}\left(1-z^2\right)^{\lambda+1}-\frac{1}{2(\lambda+1)}\frac{\gamma+1}
{\gamma-1}|z|^{\frac{2}{\gamma-1}}\left(1-z^2\right)^{\lambda+1}\right]dz=0,
\end{align*}
where the last equality is from $\frac{1}{2(\lambda+1)}\frac{\gamma+1}{\gamma-1}=1$, we obtain
\begin{align*}
&\int_{-1}^1 \bar{g}''(\bar{u}+z\bar{\rho}^{\theta})\left[\left(1-z^2\right)^{\lambda+1}-\left(1-z^2\right)^{\lambda}z^2\right]dz\\
=&\int_{-1}^1\int^{1}_{0}(1-s)\bar{g}^{(4)}
(s\bar{u}+z\bar{\rho}^{\theta})\bar{u}^2\left[\left(1-z^2\right)^{\lambda+1}-\left(1-z^2\right)^{\lambda}z^2\right]dsdz\\
\leq& C\bar{u}^2\left(|\bar{u}|^{\frac{4-2\gamma}{\gamma-1}}+|\bar{\rho}|^{\frac{\gamma-1}{2}\cdot\frac{4-2\gamma}{\gamma-1}}\right)\\
=& C\left(|\bar{u}|^{\frac{2}{\gamma-1}}+|\bar{u}|^2|\bar{\rho}|^{2-\gamma}\right).
\end{align*}
This combined with \eqref{J30} gives
\begin{equation}\label{J3}
\begin{aligned}
J_3&\leq C\int_{-\infty}^{+\infty}(P_*+Q_*)\int_{-1}^1 \bar{g}''\left[\left(1-z^2\right)^{\lambda+1}-\left(1-z^2\right)^{\lambda}z^2\right]dzdx\\
&\leq C\int_{-\infty}^{+\infty}(P_*+Q_*)\left(|\bar{u}|^{\frac{2}{\gamma-1}}+|\bar{u}|^2|\bar{\rho}|^{2-\gamma}\right)dx\\
&\leq C\left(\left\|\bar{u}^{\frac{2}{\gamma-1}}\right\|_{L^{\infty}}+
\left\|\bar{u}^2\bar{\rho}^{2-\gamma}\right\|_{L^{\infty}}\right)\int_{-\infty}^{+\infty}(P_*+Q_*)dx\\
&\leq C\left[(1+t)^{-\frac{2\gamma}{\gamma^2-1}}+(1+t)^{-\frac{\gamma+2}{\gamma+1}}\right]\int_{-\infty}^{+\infty}(P_*+Q_*)dx\\
&\leq C(1+t)^{-\frac{\gamma+2}{\gamma+1}},
\end{aligned}
\end{equation}
by \eqref{PQC}, $\gamma\in(1,2)$ and
$
\frac{2\gamma}{\gamma^2-1}\ge \frac{\gamma+2}{\gamma+1}.
$

\subsubsection{Integral estimate about $\eta$ and $A$ in terms of $J_1$, $J_2$,$J_3$, and $B$}
Plugging \eqref{J1}, \eqref{J2} and \eqref{J3} into \eqref{J123}, and integrating over $[0,t]$, we obtain
\begin{equation}\label{eta*ga+1}
\begin{aligned}
&\int_{-\infty}^{+\infty}\eta_{* }dx +\alpha\int^{t}_{0}\int_{-\infty}^{+\infty}A_*dxd\tau\\
\leq& C \int^{t}_{0}(1+\tau)^{-\frac{3\gamma+1}{\gamma+1}}d\tau+C\int^{t}_{0}(1+\tau)^{-1}\|y_t\|^2d\tau
+C\int^{t}_{0}(1+\tau)^{-\frac{2\gamma}{\gamma^2-1}}d\tau\\
+&C\int^{t}_{0}(1+\tau)^{-\frac{5\gamma^2+6\gamma+3}{2(\gamma+1)^2}\cdot\frac{\gamma+1}{\gamma}}d\tau+
C\int^{t}_{0}||\rho-\bar{\rho}||^{\gamma+1}_{L^{\gamma+1}}d\tau+C\\
\leq &C,
\end{aligned}
\end{equation}
by Lemma \ref{lem2}, Lemma \ref{lem3},
and
$$
\gamma^2-1-2\gamma,~ 2\gamma(\gamma+1)-(5\gamma^2+6\gamma+3)<0,\quad \mbox{with}~\gamma\in(1,2].
$$

Next, for any small positive constant $\varepsilon>0$, we define
\begin{equation}\label{nu}
\nu(\varepsilon)=\frac{\gamma^2+2\gamma}{(\gamma+1)^2}-2\varepsilon.
\end{equation}
Multiply \eqref{J123} with $(1+t)^{\nu(\varepsilon)}$, and then integrate over $[0, t]$; then
\begin{equation}\label{nuvar}
\begin{aligned}
&(1+t)^{\nu(\varepsilon)} \int_{-\infty}^{+\infty} \eta_*(x, t)dx+\alpha\int^{t}_{0}\int_{-\infty}^{+\infty}
(1+t)^{\nu(\varepsilon)}A_*dxd\tau\\
\leq& C+C\int^{t}_{0}\int_{-\infty}^{+\infty}(1+\tau)^{\nu(\varepsilon)-1}
(A_*+B_*)dxd\tau+C\int^{t}_{0}(1+\tau)^{\nu(\varepsilon)}(J_1+J_2+J_3)d\tau\\
\leq& C+C\int^{t}_{0}\int_{-\infty}^{+\infty}(1+\tau)^{\nu(\varepsilon)-1}B_*dxd\tau+C\int^{t}_{0}(1+\tau)^{\nu(\varepsilon)}(J_1+J_2+J_3)d\tau,
\end{aligned}
\end{equation}
where the last inequality is from \eqref{eta*ga+1} and $\eta_*\ge0$.
\subsubsection{Relationship between $A$ and $\eta$}
The relationship is stated in the following lemma.
\begin{lemma}\label{lem67}
Let $A=m\int^{1}_{-1}g'\left(1-z^2\right)^\lambda \mathrm{d} z$ and $\eta=\rho \int_{-1}^1 g\left(1-z^2\right)^\lambda \mathrm{d} z$. Let $c_0, c_1$ be two positive constants and
$$
\psi(\rho, m)=c_0A-c_1\eta.
$$
Then,
\begin{align*}
&\theta^{-2}\rho^{-2\theta}\left(\psi_{\rho\rho}\psi_{mm}-\psi^2_{\rho m}\right)\\
=&\left[\left(\frac{3\gamma-1}{\gamma-1}c_0-c_1\right)\int_{-1}^1h(a+z)\left(1-z^2\right)^\lambda d z-2\lambda c_0\int_{-1}^1  h(a+z)\left(1-z^2\right)^{\lambda-1}z^2 d z\right]\\
&\times\left[\left(\frac{\gamma+1}{\gamma-1}c_0-c_1\right)\int_{-1}^1 h(a+z)\left(1-z^2\right)^\lambda d z-2\lambda c_0\int_{-1}^1 h(a+z)\left(1-z^2\right)^{\lambda-1}z^2 d z\right]\\
&-\left[\left(\frac{3\gamma-1}{\gamma-1}c_0-c_1\right)\int_{-1}^1h(a+z)\left(1-z^2\right)^{\lambda} zdz-2\lambda c_0\int_{-1}^1h(a+z)\left(1-z^2\right)^{\lambda-1}z^3 dz\right]^2,
\end{align*}
where $\theta=\frac{\gamma-1}{2}$, $a=\frac{u}{\rho^{\theta}}$ and $h(\xi)=|\xi|^{\frac{2}{\gamma-1}}$, and
\begin{equation}\label{psimm}
\begin{aligned}
\psi_{mm}=&\left(\frac{3\gamma-1}{\gamma-1}c_0-c_1\right)\int_{-1}^1h(a+z)\left(1-z^2\right)^\lambda d z-2\lambda c_0\int_{-1}^1  h(a+z)\left(1-z^2\right)^{\lambda-1}z^2 d z\\
=&\left(2c_0-c_1\right)\int_{-1}^1|a+z|^\frac{2}{\gamma-1}\left(1-z^2\right)^\lambda d z+\frac{2a}{\gamma-1}c_0\int_{-1}^1\frac{|a+z|^\frac{2}{\gamma-1}}{a+z}\left(1-z^2\right)^{\lambda}dz.
\end{aligned}
\end{equation}
\end{lemma}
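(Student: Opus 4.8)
The plan is to exploit the homogeneity of $g$ to collapse $\psi$ to a function of the single variable $a=u/\rho^{\theta}=m\rho^{-(\gamma+1)/2}$, compute the Hessian by the chain rule, and then re-express the result through integrals of $h(a+z)$ against $(1-z^{2})^{\lambda}$ and $(1-z^{2})^{\lambda-1}$ via integration by parts in $z$. Since $g(\xi)=\tfrac{(\gamma-1)^{2}}{2\gamma(\gamma+1)}|\xi|^{2\gamma/(\gamma-1)}$ is homogeneous of degree $\tfrac{2\gamma}{\gamma-1}$ and $u+z\rho^{\theta}=\rho^{\theta}(a+z)$ with $\theta\cdot\tfrac{2\gamma}{\gamma-1}=\gamma$, one gets $\eta=\rho^{\gamma+1}G(a)$ with $G(a):=\int_{-1}^{1}g(a+z)(1-z^{2})^{\lambda}\,dz$; likewise $g'$ is homogeneous of degree $\tfrac{\gamma+1}{\gamma-1}$ and, since $m=a\rho^{(\gamma+1)/2}$, $A=a\rho^{\gamma+1}G'(a)$. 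Hence $\psi=\rho^{\gamma+1}\Phi(a)$ with $\Phi(a):=c_{0}aG'(a)-c_{1}G(a)$. Writing $\mu=\tfrac{\gamma+1}{2}$ (so $2\mu-1=\gamma$, $2\mu-2=2\theta$, $a=m\rho^{-\mu}$) and differentiating $\psi=\rho^{2\mu}\Phi(a)$, the chain rule yields $\psi_{mm}=\Phi''(a)$, $\psi_{\rho m}=\mu\rho^{\theta}(\Phi'-a\Phi'')$, $\psi_{\rho\rho}=\mu\rho^{2\theta}\bigl[\gamma(2\Phi-a\Phi')-\mu a(\Phi'-a\Phi'')\bigr]$, so that
\begin{equation*}
\theta^{-2}\rho^{-2\theta}\bigl(\psi_{\rho\rho}\psi_{mm}-\psi_{\rho m}^{2}\bigr)
=\frac{\mu}{\theta^{2}}\Bigl[\gamma(2\Phi-a\Phi')\Phi''-\mu\Phi'(\Phi'-a\Phi'')\Bigr].
\end{equation*}

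Next one expresses $\Phi,\Phi',\Phi''$ through $G',G'',G'''$: from $g''(\xi)=h(\xi)=|\xi|^{2/(\gamma-1)}$, $g'''(\xi)=\tfrac{2}{\gamma-1}h(\xi)/\xi$ and $g'(\xi)=\tfrac{\gamma-1}{\gamma+1}\xi h(\xi)$ one obtains $G''(a)=\int_{-1}^{1}h(a+z)(1-z^{2})^{\lambda}dz$, $G'''(a)=\tfrac{2}{\gamma-1}\int_{-1}^{1}\tfrac{h(a+z)}{a+z}(1-z^{2})^{\lambda}dz$ and $G'(a)=\tfrac{\gamma-1}{\gamma+1}\int_{-1}^{1}(a+z)h(a+z)(1-z^{2})^{\lambda}dz$; in particular $\psi_{mm}=\Phi''=(2c_{0}-c_{1})G''+c_{0}aG'''$, which is the second form in \eqref{psimm}. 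Because $\lambda=\tfrac{3-\gamma}{2(\gamma-1)}>0$, the boundary terms in every integration by parts over $[-1,1]$ vanish; the key identity, obtained by integrating $\tfrac{d}{dz}\bigl[h(a+z)(1-z^{2})^{\lambda}z^{k}\bigr]$ and using $\tfrac{z}{a+z}=1-\tfrac{a}{a+z}$ together with $(a+z)h(a+z)=\tfrac{\gamma+1}{\gamma-1}g'(a+z)$, rewrites the $\tfrac{h(a+z)}{a+z}$– and $(a+z)$–weighted integrals in terms of the four integrals appearing in the statement: $\int h(a+z)(1-z^{2})^{\lambda}dz$, $\int h(a+z)(1-z^{2})^{\lambda}z\,dz$, $\int h(a+z)(1-z^{2})^{\lambda-1}z^{2}dz$ and $\int h(a+z)(1-z^{2})^{\lambda-1}z^{3}dz$. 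The case $k=1$ gives
\begin{equation*}
a\!\int_{-1}^{1}\!\frac{h(a+z)}{a+z}(1-z^{2})^{\lambda}dz
=\frac{\gamma+1}{2}\!\int_{-1}^{1}\! h(a+z)(1-z^{2})^{\lambda}dz-(\gamma-1)\lambda\!\int_{-1}^{1}\! h(a+z)(1-z^{2})^{\lambda-1}z^{2}dz,
\end{equation*}
which is exactly the identity that turns the second form of $\psi_{mm}$ into the first.

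Finally, substituting these reductions into $\Phi''$, $\Phi'$, $2\Phi-a\Phi'$ and $\Phi'-a\Phi''$, then collecting terms, turns $\theta^{-2}\rho^{-2\theta}(\psi_{\rho\rho}\psi_{mm}-\psi_{\rho m}^{2})$ into a quadratic expression in those four integrals, and one checks by expansion that it factors precisely as the product of the first two bracketed expressions in the statement minus the square of the third. I expect the main obstacle to be this bookkeeping: the repeated integrations by parts keep producing $(a+z)$–weighted and higher-power integrals, and one must organise the reductions so that everything collapses exactly onto those four integrals with the stated coefficients $\tfrac{3\gamma-1}{\gamma-1}$, $\tfrac{\gamma+1}{\gamma-1}$, $2\lambda$, while simultaneously tracking the $c_{0},c_{1}$ dependence. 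It is also worth noting that, unlike the convexity of $\eta$ itself (where the Hessian determinant splits as a manifestly nonnegative term plus a Cauchy–Schwarz remainder), here the ``product minus a square'' shape of the answer is an algebraic identity that has to be verified directly rather than read off structurally. Throughout, differentiation under the integral sign and the integrations by parts are legitimate because all the integrands involved, including those carrying $(1-z^{2})^{\lambda-1}$, are integrable for the range of $\gamma$ under consideration and all boundary contributions vanish.
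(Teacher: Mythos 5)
Your proposal is correct, but it is organized genuinely differently from the paper's proof. The paper first integrates $B$ by parts in $z$ to write $\psi=\bigl(\tfrac{3\gamma-1}{\gamma-1}c_0-c_1\bigr)\eta-2\lambda c_0\,\rho\int_{-1}^1 g\,(1-z^2)^{\lambda-1}z^2\,dz$, then computes $\psi_{\rho\rho},\psi_{mm},\psi_{\rho m}$ term by term from the kinetic Hessian formulas \eqref{Hessiaeta}, groups the determinant by the coefficients $\bigl(\tfrac{3\gamma-1}{\gamma-1}c_0-c_1\bigr)^2$, $(2\lambda c_0)^2$ and the cross term, and pulls the factor $\theta^2\rho^{2\theta}$ out of the $(-u+z\theta\rho^{\theta})$ structure, so the stated coefficients appear automatically. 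You instead use scaling to reduce to one variable, $\psi=\rho^{\gamma+1}\Phi(a)$, and obtain the compact intermediate identity $\theta^{-2}\rho^{-2\theta}(\psi_{\rho\rho}\psi_{mm}-\psi_{\rho m}^2)=\tfrac{\mu}{\theta^2}\bigl[\gamma(2\Phi-a\Phi')\Phi''-\mu\Phi'(\Phi'-a\Phi'')\bigr]$ with $\mu=\tfrac{\gamma+1}{2}$; I checked your chain-rule Hessian formulas, your expressions for $G',G'',G'''$, your $k=1$ integration-by-parts identity (which indeed converts the second form of \eqref{psimm} into the first, exactly as the paper does), and, at sample values of $\gamma,a,c_0,c_1$, the consistency of your intermediate expression with the product-minus-square on the right-hand side, so your route does go through and your derivation of \eqref{psimm} is complete. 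The only part you leave as an assertion is the final collapse: since $G$ itself enters $2\Phi-a\Phi'$ when $c_1\neq0$, the expansion produces integrals not among the four stated ones, e.g. $\int_{-1}^1 h(a+z)z^2(1-z^2)^{\lambda}dz$, and you need further identities of the same kind, for instance $\int_{-1}^1 h(a+z)z^2(1-z^2)^{\lambda}dz=\tfrac12\bigl[\int_{-1}^1h(a+z)(1-z^2)^{\lambda}dz-a\int_{-1}^1h(a+z)(1-z^2)^{\lambda}z\,dz\bigr]$, obtained from $(a+z)h'(a+z)=\tfrac{2}{\gamma-1}h(a+z)$ and integration by parts against $(1-z^2)^{\lambda+1}$, before the quadratic form factors as claimed; this is routine but constitutes the bulk of the bookkeeping you anticipate, whereas the paper's coefficient-by-coefficient organization avoids it at the cost of a longer direct computation. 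Your one-variable reduction has the side benefit of giving a reusable closed form for the Hessian determinant of any entropy of this homogeneous type.
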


\begin{proof}
From the definition of $B$, we have
\begin{align*}
B=&\rho^{\theta+1}\int_{-1}^1g'z\left(1-z^2\right)^\lambda dz\\
=&\rho g z\left(1-z^2\right)^\lambda\big|^{1}_{-1}-\rho\int_{-1}^1g\left[\left(1-z^2\right)^\lambda-2\lambda\left(1-z^2\right)^{\lambda-1} z^2\right]dz\\
=&-\rho\int_{-1}^1g\left(1-z^2\right)^\lambda dz+2\lambda\rho\int_{-1}^1g\left(1-z^2\right)^{\lambda-1} z^2dz\\
=&-\eta+2\lambda\rho\int_{-1}^1g\left(1-z^2\right)^{\lambda-1} z^2dz.
\end{align*}
Since
$$
\eta(\rho, m)=\rho \int_{-1}^1 g\left(1-z^2\right)^\lambda \mathrm{d} z=\frac{\gamma-1}{2\gamma}(A+B),
$$
it turns out that
\begin{align*}
A=\frac{2\gamma}{\gamma-1}\eta-B=\frac{3\gamma-1}{\gamma-1}\eta-2\lambda\rho\int_{-1}^1g\left(1-z^2\right)^{\lambda-1} z^2dz,
\end{align*}
which indicates that
$$
\psi=c_0A-c_1\eta=\left(\frac{3\gamma-1}{\gamma-1}c_0-c_1\right)\eta-2\lambda c_0\rho\int_{-1}^1g\left(1-z^2\right)^{\lambda-1} z^2dz.
$$

From \eqref{Hessiaeta}, we can compute the Hessian matrix of $\psi$ similarly. Indeed, $\psi_{\rho\rho}$ is equal to
\begin{align*}
&\frac{3\gamma-1}{\gamma-1}c_0\bigg[\int_{-1}^1(\theta^2+\theta) \rho^{\theta-1} g' z\left(1-z^2\right)^\lambda dz  +\int_{-1}^1\left(-u+z\theta \rho^{\theta}\right)^2 g''\left(1-z^2\right)^\lambda d z \bigg]\\
&-2\lambda c_0\left[\int_{-1}^1(\theta^2+\theta) \rho^{\theta-1} g' z^3\left(1-z^2\right)^{\lambda-1} dz  +\int_{-1}^1\left(-u+z\theta \rho^{\theta}\right)^2 g''\left(1-z^2\right)^{\lambda-1}z^2 d z \right]\\
&-c_1\bigg[\int_{-1}^1(\theta^2+\theta) \rho^{\theta-1} g' z\left(1-z^2\right)^\lambda dz  +\int_{-1}^1\left(-u+z\theta \rho^{\theta}\right)^2 g''\left(1-z^2\right)^\lambda d z \bigg]\\
=&(\theta^2+\theta)\rho^{\theta-1}\left[\left(\frac{3\gamma-1}{\gamma-1}c_0-c_1\right)\int_{-1}^1 g' z\left(1-z^2\right)^\lambda dz-2\lambda c_0\int_{-1}^1 g' z^3\left(1-z^2\right)^{\lambda-1} dz\right]\\
&+\frac{1}{\rho}\left(\frac{3\gamma-1}{\gamma-1}c_0-c_1\right)\int_{-1}^1\left(-u+z\theta \rho^{\theta}\right)^2 g''\left(1-z^2\right)^\lambda d z\\
&-\frac{1}{\rho}2\lambda c_0\int_{-1}^1\left(-u+z\theta \rho^{\theta}\right)^2 g''\left(1-z^2\right)^{\lambda-1}z^2 d z \\
=&:(\theta^2+\theta)\rho^{\theta-1}\left(\psi_{\rho\rho1}+\psi_{\rho\rho2}\right)+\frac{1}{\rho}\left(\psi_{\rho\rho3}+\psi_{\rho\rho4}\right),
\end{align*}
\begin{align*}
\psi_{mm}=&\frac{1}{\rho}\left[ \left(\frac{3\gamma-1}{\gamma-1}c_0-c_1\right)\int_{-1}^1 g''\left(1-z^2\right)^\lambda d z-2\lambda c_0\int_{-1}^1 g''\left(1-z^2\right)^{\lambda-1}z^2 d z\right]\\
=&:\frac{1}{\rho}\left( \psi_{mm1}+\psi_{mm2}\right),
\end{align*}
and
\begin{align*}
\psi_{\rho m}=&\left(\frac{3\gamma-1}{\gamma-1}c_0-c_1\right)\int_{-1}^1\left(-\frac{m}{\rho^2}+z\theta\rho^{\theta-1}\right) g'' \left(1-z^2\right)^\lambda dz\\
&-2\lambda c_0\int_{-1}^1\left(-\frac{m}{\rho^2}+z\theta\rho^{\theta-1}\right) g'' \left(1-z^2\right)^{\lambda-1}z^2 dz\\
=&\frac{1}{\rho}\left(\frac{3\gamma-1}{\gamma-1}c_0-c_1\right)\int_{-1}^1\left(-u+z\theta\rho^{\theta}\right) g'' \left(1-z^2\right)^\lambda dz\\
&-\frac{1}{\rho}2\lambda c_0\int_{-1}^1\left(-u+z\theta\rho^{\theta}\right) g'' \left(1-z^2\right)^{\lambda-1}z^2 dz\\
=&:\frac{1}{\rho}\left(\psi_{\rho m1}+\psi_{\rho m2}\right),
\end{align*}
which gives that
\begin{equation}\label{Hpsi}
\begin{aligned}
\rho^{2}\left(\psi_{\rho\rho}\psi_{mm}-\psi^{2}_{\rho m}\right)=&(\theta^2+\theta)\rho^{\theta}\left(\psi_{\rho\rho1}
+\psi_{\rho\rho2}\right)\left( \psi_{mm1}+\psi_{mm2}\right)\\
&+\left(\psi_{\rho\rho3}+\psi_{\rho\rho4}\right)\left( \psi_{mm1}+\psi_{mm2}\right)
-\left(\psi_{\rho m1}+\psi_{\rho m2}\right)^2.
\end{aligned}
\end{equation}

First, we estimate $\psi_{\rho\rho1}+\psi_{\rho\rho2}$. Integrating $\psi_{\rho\rho2}$ by parts of, we see
\begin{align*}
-2\lambda\int_{-1}^1 g' z^3\left(1-z^2\right)^{\lambda-1} dz=-\rho^{\theta}\int _{-1}^1 g''\left(1-z^2\right)^{\lambda}z^2dz-2\int_{-1}^1 g'z\left(1-z^2\right)^{\lambda}dz,
\end{align*}
which implies that $\psi_{\rho\rho1}+\psi_{\rho\rho2}$ becomes
\begin{align*}
\psi_{\rho\rho1}+\psi_{\rho\rho2}=&\left[\frac{\gamma+1}{\gamma-1}c_0-c_1\right]\int_{-1}^1 g' z\left(1-z^2\right)^\lambda dz-c_0\rho^{\theta}\int_{-1}^1 g''\left(1-z^2\right)^{\lambda}z^2dz\\
=& \left[c_0-c_1\frac{\gamma-1}{\gamma+1}\right]\rho^{\theta}\int_{-1}^1 g''\left(1-z^2\right)^{\lambda+1}dz-c_0\rho^{\theta}\int_{-1}^1 g''\left(1-z^2\right)^{\lambda}z^2dz\\
=&\rho^{\theta}\left[\left(c_0-c_1\frac{\gamma-1}{\gamma+1}\right)\int_{-1}^1 g''\left(1-z^2\right)^{\lambda+1}dz-c_0\int_{-1}^1 g''\left(1-z^2\right)^{\lambda}z^2dz\right].
\end{align*}

Next, we simplify
$$
\left(\psi_{\rho\rho3}+\psi_{\rho\rho4}\right)\cdot\left( \psi_{mm1}+\psi_{mm2}\right)-\left(\psi_{\rho m1}+\psi_{\rho m2}\right)^2
$$
by merging terms of the same kind. The coefficient for $\left(\frac{3\gamma-1}{\gamma-1}c_0-c_1\right)^2$ is
\begin{align*}
&\int_{-1}^1\left(-u+z\theta \rho^{\theta}\right)^2 g''\left(1-z^2\right)^\lambda d z\cdot\int_{-1}^1 g''\left(1-z^2\right)^\lambda d z\\
&-\left[\int_{-1}^1 g'' \left(-u+z\theta\rho^{\theta}\right)\left(1-z^2\right)^\lambda  dz\right]^2\\
=&\theta^2\rho^{2\theta}\left\{\int_{-1}^1z^2g''\left(1-z^2\right)^\lambda d z\cdot\int_{-1}^1 g''\left(1-z^2\right)^\lambda d z-\left[\int_{-1}^1 g'' \left(1-z^2\right)^\lambda z dz\right]^2\right\}.
\end{align*}
Similarly, the coefficient for $(2\lambda c_0)^2$ is
\begin{align*}
&\int_{-1}^1\left(-u+z\theta \rho^{\theta}\right)^2 g''\left(1-z^2\right)^{\lambda-1}z^2 d z\cdot\int_{-1}^1 g''\left(1-z^2\right)^{\lambda-1}z^2 d z\\
&-\left[\int_{-1}^1\left(-u+z\theta\rho^{\theta}\right) g'' \left(1-z^2\right)^{\lambda-1}z^2 dz\right]^2\\
=&\theta^2\rho^{2\theta}\int_{-1}^1g''\left(1-z^2\right)^{\lambda-1}z^4 d z\cdot\int_{-1}^1 g''\left(1-z^2\right)^{\lambda-1}z^2 d z\\
&-\theta^2\rho^{2\theta}\left[\int_{-1}^1 g'' \left(1-z^2\right)^{\lambda-1}z^3 dz\right]^2,
\end{align*}
and the coefficient for $\left(\frac{3\gamma-1}{\gamma-1}c_0-c_1\right)2\lambda c_0$ is
\begin{align*}
&-\int_{-1}^1\left(-u+z\theta \rho^{\theta}\right)^2 g''\left(1-z^2\right)^\lambda d z\cdot\int_{-1}^1 g''\left(1-z^2\right)^{\lambda-1}z^2 d z\\
&-\int_{-1}^1\left(-u+z\theta \rho^{\theta}\right)^2 g''\left(1-z^2\right)^{\lambda-1}z^2 d z\cdot\int_{-1}^1 g''\left(1-z^2\right)^\lambda d z\\
&+2\int_{-1}^1\left(-u+z\theta\rho^{\theta}\right) g'' \left(1-z^2\right)^\lambda dz\cdot\int_{-1}^1\left(-u+z\theta\rho^{\theta}\right) g'' \left(1-z^2\right)^{\lambda-1}z^2 dz\\
=&-\theta^2 \rho^{2\theta}\int_{-1}^1 g''\left(1-z^2\right)^\lambda z^2d z\cdot\int_{-1}^1 g''\left(1-z^2\right)^{\lambda-1}z^2 d z\\
&-\theta^2 \rho^{2\theta}\int_{-1}^1 g''\left(1-z^2\right)^{\lambda-1}z^4 d z\cdot\int_{-1}^1 g''\left(1-z^2\right)^\lambda d z\\
&+2\theta^2 \rho^{2\theta}\int_{-1}^1\ g'' \left(1-z^2\right)^\lambda zdz\cdot\int_{-1}^1 g'' \left(1-z^2\right)^{\lambda-1}z^3 dz.
\end{align*}
Hence,
\begin{align*}
&\theta^{-2}\rho^{-2\theta}\left[\left(\psi_{\rho\rho3}+\psi_{\rho\rho4}\right)\cdot\left( \psi_{mm1}+\psi_{mm2}\right)-\left(\psi_{\rho m1}+\psi_{\rho m2}\right)^2\right]\\
=&\left[\left(\frac{3\gamma-1}{\gamma-1}c_0-c_1\right)\int_{-1}^1 g'' \left(1-z^2\right)^\lambda z^2 dz-2\lambda c_0\int_{-1}^1 g'' \left(1-z^2\right)^{\lambda-1}z^4 dz\right]\rho\psi_{mm}\\
-&\left[\left(\frac{3\gamma-1}{\gamma-1}c_0-c_1\right)\int_{-1}^1 g'' \left(1-z^2\right)^\lambda z dz-2\lambda c_0\int_{-1}^1 g'' \left(1-z^2\right)^{\lambda-1}z^3 dz\right]^2.
\end{align*}
Using the above two equalities and \eqref{Hpsi}, we deduce
\begin{align*}
&\theta^{-2}\rho^{2-2\theta}\left(\psi_{\rho\rho}\psi_{mm}-\psi^{2}_{\rho m}\right)\\
=&
\left[\left(\frac{\gamma+1}{\gamma-1}c_0-c_1\right)\int_{-1}^1 g''\left(1-z^2\right)^{\lambda+1}dz-\frac{\gamma+1}{\gamma-1}c_0\int_{-1}^1 g''\left(1-z^2\right)^{\lambda}z^2dz\right]\rho\psi_{mm}\\
&+\left[\left(\frac{3\gamma-1}{\gamma-1}c_0-c_1\right)\int_{-1}^1 g'' \left(1-z^2\right)^\lambda z^2 dz-2\lambda c_0\int_{-1}^1 g'' \left(1-z^2\right)^{\lambda-1}z^4 dz\right]\rho\psi_{mm}\\
&-\left[\left(\frac{3\gamma-1}{\gamma-1}c_0-c_1\right)\int_{-1}^1 g'' \left(1-z^2\right)^\lambda z dz-2\lambda c_0\int_{-1}^1 g'' \left(1-z^2\right)^{\lambda-1}z^3 dz\right]^2\\
=&\left[\left(\frac{\gamma+1}{\gamma-1}c_0-c_1\right)\int_{-1}^1 g'' \left(1-z^2\right)^\lambda  dz-2\lambda c_0\int_{-1}^1 g'' \left(1-z^2\right)^{\lambda-1}z^2 dz\right]\\
&\times\left[ \left(\frac{3\gamma-1}{\gamma-1}c_0-c_1\right)\int_{-1}^1 g''\left(1-z^2\right)^\lambda d z-2\lambda c_0\int_{-1}^1 g''\left(1-z^2\right)^{\lambda-1}z^2 d z\right]\\
&-\left[\left(\frac{3\gamma-1}{\gamma-1}c_0-c_1\right)\int_{-1}^1 g'' \left(1-z^2\right)^\lambda z dz-2\lambda c_0\int_{-1}^1 g'' \left(1-z^2\right)^{\lambda-1}z^3 dz\right]^2.
\end{align*}
Also, notice
$$
g''=|u+z\rho^{\theta}|^{\frac{2}{\gamma-1}}=\rho|a+z|^{\frac{2}{\gamma-1}}=\rho h(a+z),
$$
where $a=\frac{u}{\rho^{\theta}}$ and $h(\xi)=|\xi|^{\frac{2}{\gamma-1}}$. Therefore, we obtain the first conclusion.

In the sequel, we will write, for simplicity,
$$
h=h(a+z)=|a+z|^{\frac{2}{\gamma-1}},\,h'=h'(a+z).
$$

About $\psi_{mm}$, we have
\begin{align*}
\psi_{mm}=&\frac{1}{\rho}\left[ \left(\frac{3\gamma-1}{\gamma-1}c_0-c_1\right)\int_{-1}^1 g''\left(1-z^2\right)^\lambda d z-2\lambda c_0\int_{-1}^1 g''\left(1-z^2\right)^{\lambda-1}z^2 d z\right]\\
=&\left(\frac{3\gamma-1}{\gamma-1}c_0-c_1\right)\int_{-1}^1 h\left(1-z^2\right)^\lambda d z-2\lambda c_0\int_{-1}^1 h\left(1-z^2\right)^{\lambda-1}z^2 d z,
\end{align*}
by using $g''=\rho h$ again. Then, integrating by parts and noting
$(a+z)h'(a+z)=\frac{2}{\gamma-1}h(a+z),$ yields
\begin{align*}
2\lambda\int_{-1}^1  h\left(1-z^2\right)^{\lambda-1}z^2 d z&=\int_{-1}^1  h\left(1-z^2\right)^{\lambda}dz+\int_{-1}^1  h'\left(1-z^2\right)^{\lambda}zdz\\
&=\frac{\gamma+1}{\gamma-1}\int_{-1}^1  h\left(1-z^2\right)^{\lambda}dz-a\int_{-1}^1h'\left(1-z^2\right)^{\lambda}dz.
\end{align*}
Hence,
\begin{align*}
\psi_{mm}=(2c_0-c_1)\int_{-1}^1|a+z|^\frac{2}{\gamma-1}\left(1-z^2\right)^\lambda d z+\frac{2a}{\gamma-1}c_0\int_{-1}^1\frac{|a+z|^\frac{2}{\gamma-1}}{a+z}\left(1-z^2\right)^{\lambda}dz,
\end{align*}
and this ends the proof.
\end{proof}
\subsubsection{Convexity of two functions involving in A}
Further advantage of $A_*$ is summarized in the following lemma.

\begin{lemma}\label{lem7}
Let $g(\xi)=\frac{(\gamma-1)^2}{2\gamma(\gamma+1)}|\xi|^{\frac{2\gamma}{\gamma-1}}$ with $\gamma\in(1,2]$ and $A=m\int^{1}_{-1} g'\left(1-z^2\right)^\lambda \mathrm{d} z$. Then, $A$ is convex in $(\rho, m)$ for all $\rho, m$, and $A_*\ge0$.
\end{lemma}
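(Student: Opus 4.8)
The plan is to prove that $A$ is convex on its natural domain $\{\rho>0\}$ by checking that its Hessian is positive semidefinite, and then to deduce $A_*\ge 0$ from Taylor's theorem. Since $A$ is exactly the function $\psi$ of Lemma~\ref{lem67} with $c_0=1$ and $c_1=0$, almost all the bookkeeping is already done there: here $g''(\xi)=|\xi|^{2/(\gamma-1)}$, so $g''(u+z\rho^\theta)=\rho\,h(a+z)$ with $a=u/\rho^\theta$ and $h(\xi)=|\xi|^{2/(\gamma-1)}$, and Lemma~\ref{lem67} applies directly. I would therefore reduce convexity to two facts:
\[
\text{(i)}\quad A_{mm}=\psi_{mm}>0,\qquad
\text{(ii)}\quad A_{\rho\rho}A_{mm}-A_{\rho m}^2=\psi_{\rho\rho}\psi_{mm}-\psi_{\rho m}^2\ge0 ;
\]
(i) and (ii) together also give $A_{\rho\rho}\ge0$, since $A_{\rho\rho}A_{mm}=(A_{\rho\rho}A_{mm}-A_{\rho m}^2)+A_{\rho m}^2\ge0$ and $A_{mm}>0$, so the Hessian of $A$ is positive semidefinite. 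Granting convexity, $A\in C^2(\{\rho>0\})$ and the integral form of Taylor's remainder give, for $\rho,\bar\rho>0$,
\[
A_*=A(v)-A(\bar v)-\nabla A(\bar v)(v-\bar v)=\int_0^1(1-s)\,(v-\bar v)^{t}\,\nabla^2A\big(\bar v+s(v-\bar v)\big)(v-\bar v)\,ds\ge0 ,
\]
and the case where $\rho$ or $\bar\rho$ vanishes follows by continuity of $A_*$ in its arguments, as in \cite{HUA}.

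For step (i) I would use the second displayed formula for $\psi_{mm}$ in Lemma~\ref{lem67}: with $c_0=1,c_1=0$ it reads
\[
A_{mm}=2\varphi(a)+a\,\varphi'(a),\qquad \varphi(a):=\int_{-1}^1|a+z|^{\frac{2}{\gamma-1}}(1-z^2)^\lambda\,dz .
\]
Because $\tfrac{2}{\gamma-1}\ge2$ when $\gamma\in(1,2]$, the integrand is convex in $a$, so $\varphi$ is convex; it is also even (send $z\mapsto-z$), hence $\varphi'$ is odd, nondecreasing and vanishes at $0$, so $a\varphi'(a)\ge0$ for all $a$. Since $\varphi>0$, this gives $A_{mm}\ge2\varphi(a)>0$.

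Step (ii) is the crux. Substituting $g''=\rho h$ into the first displayed formula of Lemma~\ref{lem67}, one finds that its first bracket is exactly $\psi_{mm}=2\varphi+a\varphi'$; the integration-by-parts identities used in proving Lemma~\ref{lem67} — $2\lambda\int h(1-z^2)^{\lambda-1}z^2\,dz=\tfrac{\gamma+1}{\gamma-1}\varphi-a\varphi'$, the analogous identity for the $z^3$-integral, and $(a+z)h'(a+z)=\tfrac{2}{\gamma-1}h(a+z)$ — collapse the other two brackets to $a\varphi'(a)$ and $\big(aP(a)\big)'$ respectively, where $P(a):=\int_{-1}^1|a+z|^{2/(\gamma-1)}(1-z^2)^\lambda z\,dz$. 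Hence (ii) is equivalent to the one-variable inequality
\[
\big(a^2\varphi(a)\big)'\,\varphi'(a)\ \ge\ \Big(\big(aP(a)\big)'\Big)^2,\qquad a\in\mathbf R,
\]
and since both sides are even in $a$ ($\varphi$ is even, $P$ is odd) it is enough to treat $a\ge0$. This is where I expect the real work: the inequality is not pointwise — for $a>1$ the three quantities $(a^2\varphi)'$, $\varphi'$, $(aP)'$ are moments of the positive weight $(a+z)^{\frac{2}{\gamma-1}-1}(1-z^2)^\lambda$ on $[-1,1]$, but the bound fails if one only invokes the moment Cauchy--Schwarz $M_1^2\le M_0M_2$, so the concrete weight $(1-z^2)^\lambda$ must be exploited. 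The plan is to prove it through the in-depth analysis of these one-variable convex functions (the estimate needing the exponent $\tfrac{2\gamma}{\gamma-1}\ge4$, i.e.\ $\gamma\le2$, which is Lemma~\ref{lem65}), splitting the two regimes $0\le a\le1$, where $a+z$ changes sign on $[-1,1]$, and $a>1$, where it does not, and controlling $P(a)$ by $\varphi(a)$ and $\varphi'(a)$ in each. Establishing this inequality finishes (ii), hence the convexity of $A$, and with the first paragraph, $A_*\ge0$.
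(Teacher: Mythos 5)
Your reduction is the same as the paper's: you invoke Lemma \ref{lem67} with $c_0=1$, $c_1=0$, rewrite $g''=\rho h(a+z)$, prove $A_{mm}>0$ from the formula $A_{mm}=2\varphi(a)+a\varphi'(a)$ (your even/convex argument for $\varphi$ is a clean substitute for the paper's appeal to \eqref{ka}), and correctly observe that positive semidefiniteness of the Hessian plus the integral Taylor remainder yields $A_*\ge0$. The integration-by-parts identities you cite do collapse the determinant condition to $(2\varphi+a\varphi')\,a\varphi'\ge\bigl((aP)'\bigr)^2$, i.e.\ exactly the paper's $h_1h_2\ge h_3^2$, and evenness reduces to $a\ge0$. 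Up to that point the proposal is sound.

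The genuine gap is that this one-variable inequality — which is the entire substance of the lemma — is never proved: your step (ii) is a plan ("split $0\le a\le1$ and $a>1$, exploit the weight, control $P$ by $\varphi,\varphi'$"), not an argument. In the paper this occupies Cases I--III of the proof of Lemma \ref{lem7}: for $a>1$ a weighted Cauchy--Schwarz with the specific splitting $(a+z)^{\frac{1}{\gamma-1}-\frac12}\cdot\bigl[(a+z)^{\frac{1}{\gamma-1}+\frac12}+\frac{2a}{\gamma-1}(a+z)^{\frac{1}{\gamma-1}-\frac12}\bigr]$ together with the estimate $\int_{-1}^1(a+z)^{\frac{\gamma+1}{\gamma-1}}(1-z^2)^\lambda z^2dz\le\frac{4a}{\gamma-1}\int_{-1}^1(a+z)^{\frac{2}{\gamma-1}}(1-z^2)^{\lambda+1}dz$ (using $z\le a$ and $\gamma\le2$); and for $0<a<1$ a Taylor expansion of $h(a+z)$ in $a$ to order $2k$, conversion of all coefficients into Beta functions, monotonicity arguments for the remainder terms, and finally the coefficient-by-coefficient comparison of Lemma \ref{lem65}. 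None of this is supplied, so the proposal does not establish $h_1h_2-h_3^2\ge0$. A secondary inaccuracy: you attribute Lemma \ref{lem65} to the condition $\frac{2\gamma}{\gamma-1}\ge4$, i.e.\ $\gamma\le2$, but that lemma requires $k\ge4$, i.e.\ $n=\frac{2}{\gamma-1}>7$ ($\gamma<\frac97$), which is precisely where the restriction in Theorem \ref{thm1}-(ii) comes from; so even the citation you plan to lean on does not cover the full range $\gamma\in(1,2]$ in the regime $0\le a\le1$ without the additional work the paper performs.
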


\begin{proof}

From Lemma \ref{lem67}, it holds with $h=|a+z|^{\frac{2}{\gamma-1}}$, $c_0=1$ and $c_1=0$ that
\begin{equation}\label{hzeta}
\begin{aligned}
&\rho^{-2\theta}\theta^{-2}\left[A_{\rho\rho}A_{mm}-A_{\rho m}^2\right]\\
=&\left[\frac{3\gamma-1}{\gamma-1}\int_{-1}^1h\left(1-z^2\right)^\lambda d z-2\lambda\int_{-1}^1  h\left(1-z^2\right)^{\lambda-1}z^2 d z\right]\\
&\times\left[\frac{\gamma+1}{\gamma-1}\int_{-1}^1 h\left(1-z^2\right)^\lambda d z-2\lambda\int_{-1}^1 h\left(1-z^2\right)^{\lambda-1}z^2 d z\right]\\
&-\left[\frac{3\gamma-1}{\gamma-1}\int_{-1}^1h\left(1-z^2\right)^{\lambda} zdz-2\lambda\int_{-1}^1h\left(1-z^2\right)^{\lambda-1}z^3 dz\right]^2\\
=&:h_1\times h_2-h^2_3.
\end{aligned}
\end{equation}
First, we show that $A_{mm}\ge0$ for all $\rho, m$. From \eqref{psimm}, it follows that
\begin{align*}
A_{mm}=\psi_{mm}=2\int_{-1}^1|a+z|^\frac{2}{\gamma-1}\left(1-z^2\right)^\lambda d z+\frac{2a}{\gamma-1}\int_{-1}^1\frac{|a+z|^\frac{2}{\gamma-1}}{a+z}\left(1-z^2\right)^{\lambda}dz>0
\end{align*}
by \eqref{ka}. Also, each of $h_1, h_2$ and $h_3$ is even in the variable $a$ by Lemma \ref{lem5}.
Hence, we need only to illustrate that the right side of \eqref{hzeta} is greater than or equal to 0 with $a\ge0$, for which we distinguish three cases.

\vspace{4pt}
\noindent {\bf Case I}: $a=0$.
Since
$$
h_3=\frac{3\gamma-1}{\gamma-1}\int_{-1}^1|z|^{\frac{2}{\gamma-1}}\left(1-z^2\right)^{\lambda} zdz-2\lambda\int_{-1}^1|z|^{\frac{2}{\gamma-1}}\left(1-z^2\right)^{\lambda-1}z^3 dz=0,
$$
and
\begin{align*}
h_2=\frac{\gamma+1}{\gamma-1}\int_{-1}^1 |z|^{\frac{2}{\gamma-1}}\left(1-z^2\right)^\lambda dz-2\lambda\int_{-1}^1 |z|^{\frac{2}{\gamma-1}}\left(1-z^2\right)^{\lambda-1}z^2dz=0,
\end{align*}
one sees that $A_{\rho\rho}A_{mm}-A_{\rho m}^2=0$.

\vspace{4pt}
\noindent {\bf Case II}: $a>1$.
Let $\forall z\in[-1,1]$. We have
\begin{equation}\label{h'}
h'(a+z)=\frac{2}{\gamma-1}\frac{h}{a+z}=\frac{2}{\gamma-1}(a+z)^{\frac{2}{\gamma-1}-1}\ge0.
\end{equation}
Integrating by parts gives
\begin{align*}
&-2\lambda\int_{-1}^1  h\left(1-z^2\right)^{\lambda-1}z^2 d z\\
=&h\left(1-z^2\right)^{\lambda}z\bigg|^{1}_{-1}-\int_{-1}^1h\left(1-z^2\right)^{\lambda}dz-\int_{-1}^1
h'\left(1-z^2\right)^{\lambda}zdz\\
=&-\frac{\gamma+1}{\gamma-1}\int_{-1}^1h\left(1-z^2\right)^\lambda d z+a\int_{-1}^1h'\left(1-z^2\right)^\lambda d z,
\end{align*}
by \eqref{h'}. Hence,
\begin{equation}\label{h1}
h_1=2\int_{-1}^1h\left(1-z^2\right)^\lambda d z+a\int_{-1}^1h'\left(1-z^2\right)^\lambda d z,
\end{equation}
and
\begin{equation}\label{h2}
h_2=a\int_{-1}^1h'\left(1-z^2\right)^\lambda d z.
\end{equation}
As for $h_3$, by $h'(\xi)\xi=\frac{2}{\gamma-1}h(\xi)$ we deduce
\begin{align*}
&-2\lambda\int_{-1}^1  h\left(1-z^2\right)^{\lambda-1}z^3 d z\\
=&h\left(1-z^2\right)^{\lambda}z^2\bigg|^{1}_{-1}-2\int_{-1}^1h\left(1-z^2\right)^{\lambda}zdz-\int_{-1}^1
h'\left(1-z^2\right)^{\lambda}z^2dz\\
=&-\frac{2\gamma}{\gamma-1}\int_{-1}^1h\left(1-z^2\right)^\lambda zdz+a\int_{-1}^1h'\left(1-z^2\right)^\lambda z d z,
\end{align*}
which infers
\begin{equation*}
h_3=\int_{-1}^1h\left(1-z^2\right)^\lambda zd z+a\int_{-1}^1h'\left(1-z^2\right)^\lambda zd z.
\end{equation*}

Also, we have, by the Cauchy-Schwarz inequality,
\begin{equation}\label{h23}
\begin{aligned}
h^2_3=&\left[\int_{-1}^1h\left(1-z^2\right)^\lambda zd z+a\int_{-1}^1h'\left(1-z^2\right)^\lambda zd z\right]^2\\
=&\left\{\int_{-1}^1\left[(a+z)^{\frac{2}{\gamma-1}}+\frac{2a}{\gamma-1}(a+z)^{\frac{2}{\gamma-1}-1}\right]\left(1-z^2\right)^\lambda zd z\right\}^2\\
\leq& \int_{-1}^1(a+z)^{\frac{2}{\gamma-1}-1}\left(1-z^2\right)^\lambda dz\\
&\times\int_{-1}^1\left[(a+z)^{\frac{1}{\gamma-1}+\frac{1}{2}}+\frac{2a}{\gamma-1}(a+z)^{\frac{1}{\gamma-1}-\frac{1}{2}}\right]^2
\left(1-z^2\right)^\lambda z^2dz\\
=&\int_{-1}^1(a+z)^{\frac{2}{\gamma-1}-1}\left(1-z^2\right)^\lambda dz\\
&\times\int_{-1}^1\left[(a+z)^{\frac{\gamma+1}{\gamma-1}}+\frac{4a}{\gamma-1}(a+z)^{\frac{2}{\gamma-1}}+\left(\frac{2a}{\gamma-1}\right)^2
(a+z)^{\frac{2}{\gamma-1}-1}\right]\left(1-z^2\right)^\lambda z^2dz\\
=&:h_{31}h_{32}.
\end{aligned}
\end{equation}
Having in mind that $\lambda=\frac{3-\gamma}{2(\gamma-1)}$ and $2\lambda+2=\frac{\gamma+1}{\gamma-1}$, we see
\begin{equation}\label{h231}
\begin{aligned}
&\int_{-1}^1(a+z)^{\frac{\gamma+1}{\gamma-1}}\left(1-z^2\right)^\lambda z^2dz\\
=&-\frac{1}{2(\lambda+1)}(a+z)^{\frac{\gamma+1}{\gamma-1}}\left(1-z^2\right)^{\lambda+1}z\bigg|^{1}_{-1}+\int_{-1}^1(a+z)^{\frac{2}{\gamma-1}}
\left(1-z^2\right)^{\lambda+1}zdz\\
&+\frac{1}{2(\lambda+1)}\int_{-1}^1(a+z)^{\frac{\gamma+1}{\gamma-1}}\left(1-z^2\right)^{\lambda+1}dz\\
\leq& a\int_{-1}^1(a+z)^{\frac{2}{\gamma-1}}\left(1-z^2\right)^{\lambda+1}dz+\frac{2\gamma-2}{\gamma+1}a\int_{-1}^1(a+z)^{\frac{2}{\gamma-1}}
\left(1-z^2\right)^{\lambda+1}dz,
\end{aligned}
\end{equation}
where the last inequality is from $z\le a$, since $a>1$ and $z\in[-1,1]$. Then,
\begin{align*}
\frac{4}{\gamma-1}-\left(1+\frac{2\gamma-2}{\gamma+1}\right)=\frac{4(\gamma+1)-(3\gamma-1)(\gamma-1)}{\gamma^2-1}=
\frac{(3\gamma+1)(3-\gamma3)}{\gamma^2-1}\ge0,
\end{align*}
by $1<\gamma\le2$. Combining this and \eqref{h231} yields
\begin{align*}
\int_{-1}^1(a+z)^{\frac{\gamma+1}{\gamma-1}}\left(1-z^2\right)^\lambda z^2\leq \frac{4a}{\gamma-1}\int_{-1}^1(a+z)^{\frac{2}{\gamma-1}}\left(1-z^2\right)^{\lambda+1}dz,
\end{align*}
which infers
\begin{align*}
h_{32}&=\int_{-1}^1\left[(a+z)^{\frac{\gamma+1}{\gamma-1}}+\frac{4a}{\gamma-1}(a+z)^{\frac{2}{\gamma-1}}+\left(\frac{2a}{\gamma-1}\right)^2
(a+z)^{\frac{2}{\gamma-1}-1}\right]\left(1-z^2\right)^\lambda z^2dz\\
&\leq \frac{4a}{\gamma-1}\int_{-1}^1(a+z)^{\frac{2}{\gamma-1}}\left(1-z^2\right)^{\lambda}dz+
\left(\frac{2a}{\gamma-1}\right)^2\int_{-1}^1(a+z)^{\frac{2}{\gamma-1}-1}\left(1-z^2\right)^\lambda dz.
\end{align*}
Therefore, it follows from \eqref{h23} that
\begin{align*}
h^2_3\leq&\frac{2a}{\gamma-1}\int_{-1}^1(a+z)^{\frac{2}{\gamma-1}-1}\left(1-z^2\right)^\lambda dz\\
&\times \int_{-1}^1\left[2(a+z)^{\frac{2}{\gamma-1}}+
\frac{2a}{\gamma-1}(a+z)^{\frac{2}{\gamma-1}-1}\right]\left(1-z^2\right)^{\lambda}dz\\
=&a\int_{-1}^1h'\left(1-z^2\right)^\lambda d z\cdot\int_{-1}^1\left[2h+a h'\right]\left(1-z^2\right)^\lambda d z\\
=&h_1h_2,
\end{align*}
by \eqref{h1} and \eqref{h2}. Thus, $A_{\rho\rho}A_{mm}-A^2_{\rho m}\ge0$.

\vspace{4pt}
\noindent {\bf Case III}: $0<a<1$. For simplicity, we write
$$
n=\frac{2}{\gamma-1}>7.
$$
Then, \eqref{hzeta} becomes
\begin{equation}\label{h1h2}
\begin{aligned}
h_1h_2-h^2_3=&\left[(n+3)\int_{-1}^1h\left(1-z^2\right)^{\frac{n-1}{2}} d z-(n-1)\int_{-1}^1  h\left(1-z^2\right)^{\frac{n-3}{2}}z^2 d z\right]\\
&\times\left[(n+1)\int_{-1}^1 h\left(1-z^2\right)^{\frac{n-1}{2}} d z-(n-1)\int_{-1}^1 h\left(1-z^2\right)^{\frac{n-3}{2}}z^2 d z\right]\\
&-\left[(n+3)\int_{-1}^1h\left(1-z^2\right)^{\frac{n-1}{2}} zdz-(n-1)\int_{-1}^1h\left(1-z^2\right)^{\frac{n-3}{2}}z^3 dz\right]^2,
\end{aligned}
\end{equation}
with $h=|a+z|^n$. Next, we consider separately two subcases.

\vspace{4pt}
\noindent\textbf{$Subcase~1: n\in(2k-1,2k]$}.
Notice $k\ge4$ from $n>7$. We first estimate $h_1$ and $h_2$. By Taylor's theorem (Lemma \ref{lem64}), it holds that for $h(\xi)=|\xi|^{n}$,
\begin{equation}\label{htaylor}
\begin{aligned}
h
=&h(z)+h'(z)a+\cdot\cdot\cdot+\frac{h^{(2k-3)}(a+z)}{(2k-3)!}a^{2k-3}+\frac{h^{(2k-2)}(a+z)}{(2k-2)!}a^{2k-2}\\
&+a^{2k-1}\int^{1}_{0}\frac{(1-s)^{2k-2}}{(2k-2)!}h^{(2k-1)}(sa+z)ds\\
=&|z|^{n}+n\frac{|z|^{n}}{z}a+\cdot\cdot\cdot+C^{2k-3}_{n}\frac{|z|^{n-2k+4}}{z}a^{2k-3}+C^{2k-2}_{n}|z|^{n-2k+2}a^{2k-2}\\
&+a^{2k-1}\int^{1}_{0}\frac{(1-s)^{2k-2}}{(2k-2)!}h^{(2k-1)}(sa+z)ds.
\end{aligned}
\end{equation}
Consequently,
\begin{align*}
(n-1)\int_{-1}^1|z|^{n-2k+2}\left(1-z^2\right)^{\frac{n-3}{2}}z^2 d z=&2(n-1)\int_{0}^1z^{n-2k+4}\left(1-z^2\right)^{\frac{n-3}{2}}dz\\
=&2(n-2k+3)\int_{0}^1z^{n-2k+2}\left(1-z^2\right)^{\frac{n-1}{2}}dz,
\end{align*}
and
\begin{align*}
&(n+3)\int^1_{-1}|z|^{n-2k+2}\left(1-z^2\right)^{\frac{n-1}{2}} d z-(n-1)\int_{-1}^1|z|^{n-2k+2}\left(1-z^2\right)^{\frac{n-3}{2}}z^2 d z\\
=&2(n+3)\int^1_{0}z^{n-2k+2}\left(1-z^2\right)^{\frac{n-1}{2}} d z-2(n-2k+3)\int^1_{0}z^{n-2k+2}\left(1-z^2\right)^{\frac{n-1}{2}} d z\\
=&2k\int^1_{0}\left(1-t\right)^{\frac{n-1}{2}}t^{\frac{n-2k+1}{2}}dt\\
=&2kB\left(\frac{n+1}{2},\frac{n-2k+3}{2}\right).
\end{align*}
Therefore,
it follows from \eqref{h1h2} and \eqref{htaylor} that
\begin{equation}\label{h1k-1}
\begin{aligned}
h_1=&\Sigma^{k-1}_{j=0}(2j+2)C^{2j}_{n}B\left(\frac{n+1}{2},\frac{n-2j+1}{2}\right)a^{2j}\\
&+(n+3)a^{2k-1}\int^{1}_{-1}\int^{1}_{0}\frac{(1-s)^{2k-2}}{(2k-2)!}h^{(2k-1)}(sa+z)\left(1-z^2\right)^{\frac{n-1}{2}} dsdz\\
&-(n-1)a^{2k-1}\int^{1}_{-1}\int^{1}_{0}\frac{(1-s)^{2k-2}}{(2k-2)!}h^{(2k-1)}(sa+z)\left(1-z^2\right)^{\frac{n-3}{2}}z^2 dsdz\\
=&\Sigma^{k-1}_{j=0}(2j+2)C^{2j}_{n}B\left(\frac{n+1}{2},\frac{n-2j+1}{2}\right)a^{2j}+f_2(a),
\end{aligned}
\end{equation}
and
\begin{equation}\label{h2k-1}
\begin{aligned}
h_2=&\Sigma^{k-1}_{j=0}(2j)C^{2j}_{n}B\left(\frac{n+1}{2},\frac{n-2j+1}{2}\right)a^{2j}\\
&+(n+1)a^{2k-1}\int^{1}_{-1}\int^{1}_{0}\frac{(1-s)^{2k-2}}{(2k-2)!}h^{(2k-1)}(sa+z)\left(1-z^2\right)^{\frac{n-1}{2}} dsdz\\
&-(n-1)a^{2k-1}\int^{1}_{-1}\int^{1}_{0}\frac{(1-s)^{2k-2}}{(2k-2)!}h^{(2k-1)}(sa+z)\left(1-z^2\right)^{\frac{n-3}{2}}z^2 dsdz\\
=&\Sigma^{k-1}_{j=0}(2j)C^{2j}_{n}B\left(\frac{n+1}{2},\frac{n-2j+1}{2}\right)a^{2j}+f_3(a).
\end{aligned}
\end{equation}
Through
integration by parts, it is easy to see
\begin{align*}
&(n-1)\int^{1}_{-1}\int^{1}_{0}\frac{(1-s)^{2k-2}}{(2k-2)!}h^{(2k-1)}(sa+z)\left(1-z^2\right)^{\frac{n-3}{2}}z^2 dsdz\\
=&\int^{1}_{-1}\int^{1}_{0}\frac{(1-s)^{2k-2}}{(2k-2)!}h^{(2k-1)}(sa+z)\left(1-z^2\right)^{\frac{n-1}{2}} dsdz\\
&+\int^{1}_{-1}\int^{1}_{0}\frac{(1-s)^{2k-2}}{(2k-2)!}h^{(2k)}(sa+z)\left(1-z^2\right)^{\frac{n-1}{2}}z dsdz\\
=&(n-2k+2)\int^{1}_{-1}\int^{1}_{0}\frac{(1-s)^{2k-2}}{(2k-2)!}h^{(2k-1)}(sa+z)\left(1-z^2\right)^{\frac{n-1}{2}} dsdz\\
&-a\int^{1}_{-1}\int^{1}_{0}\frac{s(1-s)^{2k-2}}{(2k-2)!}h^{(2k)}(sa+z)\left(1-z^2\right)^{\frac{n-1}{2}} dsdz,
\end{align*}
due to
\begin{equation*}
(sa+z)h^{(2k)}(sa+z)=(n-2k+1)h^{(2k-1)}(sa+z).
\end{equation*}
Hence,
\begin{equation}\label{f2f3}
\begin{aligned}
f_3(a)=&(2k-1)\int^{1}_{-1}\int^{1}_{0}\frac{(1-s)^{2k-2}}{(2k-2)!}h^{(2k-1)}(sa+z)\left(1-z^2\right)^{\frac{n-1}{2}} dsdz\\
&+a\int^{1}_{-1}\int^{1}_{0}\frac{s(1-s)^{2k-2}}{(2k-2)!}h^{(2k)}(sa+z)\left(1-z^2\right)^{\frac{n-1}{2}} dsdz\\
\ge&(2k-1)\int^{1}_{-1}\int^{1}_{0}\frac{(1-s)^{2k-2}}{(2k-2)!}h^{(2k-1)}(sa+z)\left(1-z^2\right)^{\frac{n-1}{2}} dsdz\\
=&:(2k-1)f_4(a).
\end{aligned}
\end{equation}
Here we have used $h^{(2k)}\ge0, a>0$ and $s\ge0$. It is obvious that $f_4(a)=0$ and
\begin{equation*}
f'_4(a)=\int^{1}_{-1}\int^{1}_{0}\frac{s(1-s)^{2k-2}}{(2k-2)!}h^{(2k)}(sa+z)\left(1-z^2\right)^{\frac{n-1}{2}} dsdz\ge0.
\end{equation*}
In conclusion,
$$
f_3(a)\ge(2k-1)f_4(a)\ge(2k-1)f_4(0)=0.
$$
Similarly, we know that
\begin{align*}
f_2(a)=&(n+3)\int^{1}_{-1}\int^{1}_{0}\frac{(1-s)^{2k-2}}{(2k-2)!}h^{(2k-1)}(sa+z)\left(1-z^2\right)^{\frac{n-1}{2}} dsdz\\
&-(n-1)\int^{1}_{-1}\int^{1}_{0}\frac{(1-s)^{2k-2}}{(2k-2)!}h^{(2k-1)}(sa+z)\left(1-z^2\right)^{\frac{n-3}{2}}z^2 dsdz\ge0.
\end{align*}
As a result, using the above two equalities, \eqref{h1k-1} and \eqref{h2k-1}, we obtain
\begin{equation}\label{h1h2leq}
\begin{aligned}
h_1h_2\ge&\left[\Sigma^{k-1}_{j=0}(2j+2)C^{2j}_{n}B\left(\frac{n+1}{2},\frac{n-2j+1}{2}\right)a^{2j}\right] \\ &\times\left[\Sigma^{k-1}_{j=0}2jC^{2j}_{n}B\left(\frac{n+1}{2},\frac{n-2j+1}{2}\right)a^{2j}\right].
\end{aligned}
\end{equation}

As for $h_3$, we see from \eqref{htaylor} that
\begin{align*}
&(n+3)\int^{1}_{-1}\frac{|z|^{n-2k+4}}{z}\left(1-z^2\right)^{\frac{n-1}{2}}zdz-(n-1)\int^{1}_{-1}\frac{|z|^{n-2k+4}}{z}\left(1-z^2\right)^{\frac{n-3}{2}}z^3dz\\
=&(n+3)\int^{1}_{-1}|z|^{n-2k+4}\left(1-z^2\right)^{\frac{n-1}{2}}dz-(n-1)\int^{1}_{-1}|z|^{n-2k+4}\left(1-z^2\right)^{\frac{n-3}{2}}z^2dz\\
=&(2k-2)\int^{1}_{-1}|z|^{n-2k+4}\left(1-z^2\right)^{\frac{n-1}{2}}dz\\
=&(2k-2)B\left(\frac{n+1}{2},\frac{n-2k+5}{2}\right),
\end{align*}
which infers
\begin{equation}\label{h3f5}
\begin{aligned}
h_3=&\Sigma^{k-2}_{j=0}(2j+2)C^{2j+1}_{n}B\left(\frac{n+1}{2},\frac{n-2j+1}{2}\right)a^{2j+1}\\
&+(n+3)a^{2k-1}\int^{1}_{-1}\int^{1}_{0}\frac{(1-s)^{2k-2}}{(2k-2)!}h^{(2k-1)}(sa+z)\left(1-z^2\right)^{\frac{n-1}{2}}z dsdz\\
&-(n-1)a^{2k-1}\int^{1}_{-1}\int^{1}_{0}\frac{(1-s)^{2k-2}}{(2k-2)!}h^{(2k-1)}(sa+z)\left(1-z^2\right)^{\frac{n-3}{2}}z^3 dsdz\\
=&\Sigma^{k-2}_{j=0}(2j+2)C^{2j+1}_{n}B\left(\frac{n+1}{2},\frac{n-2j+1}{2}\right)a^{2j+1}\\
&+a^{2k-1}\left[(n+3)f_5(a)-(n-1)f_6(a)\right].
\end{aligned}
\end{equation}
Then, the derivative of $f_5(a)$ is
\begin{align*}
f'_5(a)&=\int^{1}_{-1}\int^{1}_{0}\frac{s(1-s)^{2k-2}}{(2k-2)!}h^{(2k)}(sa+z)\left(1-z^2\right)^{\frac{n-1}{2}}z dsdz\\
&=\int^{1}_{0}\frac{s(1-s)^{2k-2}}{(2k-2)!}\prod^{2k-1}_{j=0}(n-j)\left[\int^{1}_{-1}|sa+z|^{n-2k}\left(1-z^2\right)^{\frac{n-1}{2}}zdz\right]ds
\le0,
\end{align*}
by
\begin{equation}\label{f5n<2k}
\begin{aligned}
&\int^{1}_{-1}|sa+z|^{n-2k}\left(1-z^2\right)^{\frac{n-1}{2}}zdz\\
=&\int^{1}_{0}|sa+z|^{n-2k}\left(1-z^2\right)^{\frac{n-1}{2}}zdz+\int^{0}_{-1}|sa+z|^{n-2k}\left(1-z^2\right)^{\frac{n-1}{2}}zdz\\
=&\int^{1}_{0}|sa+z|^{n-2k}\left(1-z^2\right)^{\frac{n-1}{2}}zdz-\int^{1}_{0}|sa-z|^{n-2k}\left(1-z^2\right)^{\frac{n-1}{2}}zdz\le0,
\end{aligned}
\end{equation}
with $s,a\ge0$ and $n-2k\le0$. Similarly, we have $f'_6(a)\le0$. Hence,
\begin{align*}
f_5(a)\leq f_5(0)&=\int^{1}_{-1}\int^{1}_{0}\frac{(1-s)^{2k-2}}{(2k-2)!}h^{(2k-1)}(z)\left(1-z^2\right)^{\frac{n-1}{2}}z dsdz\\
&=\int^{1}_{-1}\frac{1}{(2k-1)!}h^{(2k-1)}(z)\left(1-z^2\right)^{\frac{n-1}{2}}z dz\\
&=C^{2k-1}_{n}\int^{1}_{-1}\frac{|z|^{n-2k+2}}{z}\left(1-z^2\right)^{\frac{n-1}{2}}z dz\\
&=C^{2k-1}_{n}\int^{1}_{0}t^{\frac{n-2k+1}{2}}(1-t)^{\frac{n-1}{2}}dt\\
&=C^{2k-1}_{n}B\left(\frac{n+1}{2},\frac{n-2k+3}{2}\right).
\end{align*}
Since $\int^{1}_{-1}h^{(2k-1)}(s+z)\left(1-z^2\right)^{\frac{n-3}{2}}z^3dz$ is a non increasing in $s$, we deduce (from \eqref{f5n<2k} again)
\begin{align*}
f_6(a)\ge f_6(1)&=\int^{1}_{-1}\int^{1}_{0}\frac{(1-s)^{2k-2}}{(2k-2)!}h^{(2k-1)}(s+z)\left(1-z^2\right)^{\frac{n-3}{2}}z^3 dsdz\\
&\ge\int^{1}_{-1}\int^{1}_{0}\frac{(1-s)^{2k-2}}{(2k-2)!}h^{(2k-1)}(1+z)\left(1-z^2\right)^{\frac{n-3}{2}}z^3 dsdz\\
&=\int^{1}_{0}\frac{(1-s)^{2k-2}}{(2k-2)!}\prod^{2k-2}_{j=0}(n-j)\left[\int^{1}_{-1}(1+z)^{n-2k+1}
\left(1-z^2\right)^{\frac{n-3}{2}}z^3dz\right]ds\ge0,
\end{align*}
owing to $n-2k+1\ge0$ and
\begin{align*}
&\int^{1}_{-1}(1+z)^{n-2k+1}\left(1-z^2\right)^{\frac{n-3}{2}}z^3dz\\
=&\int^{1}_{0}(1+z)^{n-2k+1}\left(1-z^2\right)^{\frac{n-3}{2}}z^3dz-\int^{1}_{0}(1-z)^{n-2k+1}\left(1-z^2\right)^{\frac{n-3}{2}}z^3dz\ge0.
\end{align*}
Then, it follows from \eqref{h3f5} that
\begin{align*}
h_3\leq&\Sigma^{k-2}_{j=0}(2j+2)C^{2j+1}_{n}B\left(\frac{n+1}{2},\frac{n-2j+1}{2}\right)a^{2j+1}\\
&+a^{2k-1}(n+3)C^{2k-1}_{n}B\left(\frac{n+1}{2},\frac{n-2k+3}{2}\right),
\end{align*}
and so $h_1h_2-h^{2}_3\ge0$, in view of the first result of Lemma \ref{lem65} and \eqref{h1h2leq}.

\vspace{4pt}
\noindent\textbf{$Subcase~2: n\in(2k,2k+1]$}.
As in Subcase 1, by Taylor's theorem we have
\begin{equation}\label{htaylor1}
\begin{aligned}
h=&h(z)+h'(z)a+\cdot\cdot\cdot+\frac{h^{(2k-2)}(a+z)}{(2k-2)!}a^{2k-2}+\frac{h^{(2k-1)}(a+z)}{(2k-1)!}a^{2k-1}\\
&+a^{2k-1}\int^{1}_{0}\frac{(1-s)^{2k-2}}{(2k-2)!}h^{(2k-1)}(sa+z)ds\\
=&|z|^{n}+n\frac{|z|^{n}}{z}a+\cdot\cdot\cdot+C^{2k-2}_{n}|z|^{n-2k+2}a^{2k-2}+C^{2k-1}_{n}\frac{|z|^{n-2k+2}}{z}a^{2k-1}\\
&+a^{2k}\int^{1}_{0}\frac{(1-s)^{2k-1}}{(2k-1)!}h^{(2k)}(sa+z)ds.
\end{aligned}
\end{equation}
Since
$$
\int^{1}_{-1}\frac{|z|^{n-2k+2}}{z}\left(1-z^2\right)^{\frac{n-1}{2}}dz=0,
$$
and
$$
\int^{1}_{-1}\frac{|z|^{n-2k+2}}{z}\left(1-z^2\right)^{\frac{n-3}{2}}z^2dz=0,
$$
we obtain (instead of \eqref{h1k-1} and \eqref{h2k-1})
\begin{align*}
h_1=&\Sigma^{k-1}_{j=0}(2j+2)C^{2j}_{n}B\left(\frac{n+1}{2},\frac{n-2j+1}{2}\right)a^{2j}\\
&+(n+3)a^{2k}\int^{1}_{-1}\int^{1}_{0}\frac{(1-s)^{2k-1}}{(2k-1)!}h^{(2k)}(sa+z)\left(1-z^2\right)^{\frac{n-1}{2}} dsdz\\
&-(n-1)a^{2k}\int^{1}_{-1}\int^{1}_{0}\frac{(1-s)^{2k-1}}{(2k-1)!}h^{(2k)}(sa+z)\left(1-z^2\right)^{\frac{n-3}{2}}z^2 dsdz\\
=&\Sigma^{k-1}_{j=0}(2j+2)C^{2j}_{n}B\left(\frac{n+1}{2},\frac{n-2j+1}{2}\right)a^{2j}+a^{2k}f_7(a),
\end{align*}
and
\begin{align*}
h_2=&\Sigma^{k-1}_{j=0}(2j)C^{2j}_{n}B\left(\frac{n+1}{2},\frac{n-2j+1}{2}\right)a^{2j}\\
&+(n+1)a^{2k}\int^{1}_{-1}\int^{1}_{0}\frac{(1-s)^{2k-1}}{(2k-1)!}h^{(2k)}(sa+z)\left(1-z^2\right)^{\frac{n-1}{2}} dsdz\\
&-(n-1)a^{2k}\int^{1}_{-1}\int^{1}_{0}\frac{(1-s)^{2k-1}}{(2k-1)!}h^{(2k)}(sa+z)\left(1-z^2\right)^{\frac{n-3}{2}}z^2 dsdz\\
=&\Sigma^{k-1}_{j=0}(2j)C^{2j}_{n}B\left(\frac{n+1}{2},\frac{n-2j+1}{2}\right)a^{2j}+a^{2k}f_8(a).
\end{align*}

We first estimate $f_8(a)$, since
$$
f_7(a)=f_8(a)+2\int^{1}_{-1}\int^{1}_{0}\frac{(1-s)^{2k-1}}{(2k-1)!}h^{(2k)}(sa+z)\left(1-z^2\right)^{\frac{n-1}{2}} dsdz.
$$
Integrate by parts (just like with $f_2(a)$) and replace $k$ with $k+\frac{1}{2}$ in \eqref{f2f3}; so we get
\begin{align*}
f_7(a)=&2k\int^{1}_{-1}\int^{1}_{0}\frac{(1-s)^{2k-1}}{(2k-1)!}h^{(2k)}(sa+z)\left(1-z^2\right)^{\frac{n-1}{2}} dsdz\\
&+a\int^{1}_{-1}\int^{1}_{0}\frac{s(1-s)^{2k-1}}{(2k-1)!}h^{(2k+1)}(sa+z)\left(1-z^2\right)^{\frac{n-1}{2}} dsdz\\
\ge&2k\int^{1}_{-1}\int^{1}_{0}\frac{(1-s)^{2k-1}}{(2k-1)!}h^{(2k)}(sa+z)\left(1-z^2\right)^{\frac{n-1}{2}} dsdz\\
=&:2k\int^{1}_{0}\frac{(1-s)^{2k-1}}{(2k-1)!}f_9(a)ds.
\end{align*}
Then,
\begin{align*}
&\frac{f'_9(a)}{s\prod^{2k}_{j=0}(n-j)}\\
=&\int^{1}_{-1}\frac{|sa+z|^{n-2k}}{sa+z}\left(1-z^2\right)^{\frac{n-1}{2}}dz\\
=&\left[\int^{-sa}_{-1}\frac{|sa+z|^{n-2k}}{sa+z}\left(1-z^2\right)^{\frac{n-1}{2}}dz+
\int^{1}_{-sa}\frac{|sa+z|^{n-2k}}{sa+z}\left(1-z^2\right)^{\frac{n-1}{2}}dz\right]\\
=&\left[\int^{0}_{-1+sa}\frac{|t|^{n-2k}}{t}\left(1-(t-sa)^2\right)^{\frac{n-1}{2}}dt+
\int^{1+sa}_{0}t^{n-2k-1}\left(1-(t-sa)^2\right)^{\frac{n-1}{2}}dt\right]\\
\ge&\int^{1-sa}_{0}t^{n-2k-1}\left[\left(1-(t-sa)^2\right)^{\frac{n-1}{2}}-
\left(1-(t+sa)^2\right)^{\frac{n-1}{2}}\right]dt
\ge0,
\end{align*}
by
$$
1-(t-sa)^2\ge 1-(t+sa)^2, \,\forall t\in [0,1-sa].
$$
As a result,
\begin{align*}
f_8(a)\ge 2k\int^{1}_{0}\frac{(1-s)^{2k-1}}{(2k-1)!}f_9(0)ds=&2kC^{2k}_{n}\int^{1}_{-1}|z|^{n-2k}\left(1-z^2\right)^{\frac{n-1}{2}} dz\\
=&2kC^{2k}_{n}B\left(\frac{n+1}{2},\frac{n-2k+1}{2}\right),
\end{align*}
and
\begin{align*}
f_7(a)=f_8(a)+2\int^{1}_{-1}\int^{1}_{0}\frac{(1-s)^{2k-1}}{(2k-1)!}f_9(a) ds\ge(2k+2)C^{2k}_{n}B\left(\frac{n+1}{2},\frac{n-2k+1}{2}\right).
\end{align*}
Hence,
\begin{equation}\label{h1h2leq'}
\begin{aligned}
h_1h_2\ge&\left[\Sigma^{k}_{j=0}(2j+2)C^{2j}_{n}B\left(\frac{n+1}{2},\frac{n-2j+1}{2}\right)a^{2j}\right] \\
&\times\left[\Sigma^{k}_{j=0}2jC^{2j}_{n}B\left(\frac{n+1}{2},\frac{n-2j+1}{2}\right)a^{2j}\right].
\end{aligned}
\end{equation}

As for $h_3$, taking the additional term $C^{2k-1}_{n}\frac{|z|^{n-2k+2}}{z}a^{2k-1}$ in \eqref{htaylor1} into account, we deduce
\begin{equation}\label{h3f51}
\begin{aligned}
h_3=&\Sigma^{k-1}_{j=0}(2j+2)C^{2j+1}_{n}B\left(\frac{n+1}{2},\frac{n-2j+1}{2}\right)a^{2j+1}\\
&+(n+3)a^{2k}\int^{1}_{-1}\int^{1}_{0}\frac{(1-s)^{2k-1}}{(2k-1)!}h^{(2k)}(sa+z)\left(1-z^2\right)^{\frac{n-1}{2}}z dsdz\\
&-(n-1)a^{2k}\int^{1}_{-1}\int^{1}_{0}\frac{(1-s)^{2k-1}}{(2k-1)!}h^{(2k)}(sa+z)\left(1-z^2\right)^{\frac{n-3}{2}}z^3 dsdz\\
\leq&\Sigma^{k-1}_{j=0}(2j+2)C^{2j+1}_{n}B\left(\frac{n+1}{2},\frac{n-2j+1}{2}\right)a^{2j+1}+(n+3)a^{2k}f_{10}(a),
\end{aligned}
\end{equation}
where the last inequality is from
\begin{align*}
&\int^{1}_{-1}\int^{1}_{0}\frac{(1-s)^{2k-1}}{(2k-1)!}h^{(2k)}(sa+z)\left(1-z^2\right)^{\frac{n-3}{2}}z^3 dsdz\\
=&2kC^{2k}_{n}\int^{1}_{-1}\int^{1}_{0}(1-s)^{2k-1}|sa+z|^{n-2k}\left(1-z^2\right)^{\frac{n-3}{2}}z^3 dsdz\\
=&2kC^{2k}_{n}\int^{1}_{0}(1-s)^{2k-1}ds\bigg\{\int^{1}_{0}\left[(sa+z)^{n-2k}-|sa-z|^{n-2k}\right]\left(1-z^2\right)^{\frac{n-3}{2}}z^3dz\bigg\}
\ge0,
\end{align*}
by $n-2k\ge0$. Then, in view of Lemma \ref{lem62}, $0<a<1$ and $n-2k\in(0,1]$, it holds that
\begin{equation}\label{f9}
\begin{aligned}
f_{10}(a)\le&\int^{1}_{0}\int^{1}_{0}\frac{(1-s)^{2k-1}}{(2k-1)!}h^{(2k)}(sa+z)\left(1-z^2\right)^{\frac{n-1}{2}}z dsdz\\
=&C^{2k-1}_{n}(n-2k+1)\int^{1}_{0}\int^{1}_{0}(1-s)^{2k-1}(sa+z)^{n-2k}\left(1-z^2\right)^{\frac{n-1}{2}}zdsdz\\
\le&C^{2k-1}_{n}(n-2k+1)\int^{1}_{0}\int^{1}_{0}(1-s)^{2k-1}\left(s^{n-2k}+z^{n-2k}\right)\left(1-z^2\right)^{\frac{n-1}{2}}zdsdz\\
=:&C^{2k-1}_{n}(n-2k+1)\left(K_1+K_2\right).
\end{aligned}
\end{equation}
On one hand, it follows from $n\in(2k,2k+1]$ that
\begin{equation}\label{K1}
\begin{aligned}
K_1=&\int^{1}_{0}\int^{1}_{0}(1-s)^{2k-1}s^{n-2k}\left(1-z^2\right)^{\frac{n-1}{2}}zdsdz\\
=&\frac{1}{2}\int^{1}_{0}\left(1-t\right)^{\frac{n-1}{2}}dt\cdot\int^{1}_{0}(1-s)^{2k-1}s^{n-2k}ds\\
=&\frac{1}{2}B\left(\frac{n+1}{2},1\right)\int^{1}_{0}(1-s)^{2k-1}s^{n-2k}ds\\
\leq& \frac{1}{4k}B\left(\frac{n+1}{2},\frac{n-2k+1}{2}\right),
\end{aligned}
\end{equation}
where the last inequality is from
\begin{align*}
\int^{1}_{0}(1-s)^{2k-1}s^{n-2k}ds=&\frac{2k-1}{n-2k+1}\int^{1}_{0}(1-s)^{2k-2}s^{n-2k+1}ds\\
=&\frac{2k-1}{n-2k+1}\cdot\cdot\cdot\frac{2}{n-2}\int^{1}_{0}(1-s)s^{n-2}ds\\
=&\frac{2k-1}{n-2k+1}\cdot\cdot\cdot\frac{2}{n-2}\cdot\frac{1}{n-1}\int^{1}_{0}s^{n-1}ds\\
=&\frac{2k-1}{n-2k+1}\cdot\cdot\cdot\frac{2}{n-2}\cdot\frac{1}{n-1}\cdot\frac{1}{n}\\
=&\frac{1}{n}\cdot\frac{2k-1}{n-1}\cdot\frac{2k-2}{n-2}\cdot\cdot\cdot\frac{1}{n-2k+1}\
\leq \frac{1}{2k},
\end{align*}
by $n\in(2k,2k+1]$.
On the other hand,
\begin{equation}\label{K2}
\begin{aligned}
K_2=&\int^{1}_{0}\int^{1}_{0}(1-s)^{2k-1}z^{n-2k}\left(1-z^2\right)^{\frac{n-1}{2}}zdz\\
=&\frac{1}{4k}\int^{1}_{0}t^{\frac{n-2k}{2}}\left(1-t\right)^{\frac{n-1}{2}}dt\\
=&\frac{1}{4k}B\left(\frac{n+1}{2},\frac{n-2k+2}{2}\right)\\
\leq&\frac{1}{4k}B\left(\frac{n+1}{2},\frac{n-2k+1}{2}\right).
\end{aligned}
\end{equation}
Plugging \eqref{K1} and \eqref{K2} into \eqref{f9} yields
\begin{align*}
f_{10}(a)\leq C^{2k}_{n}B\left(\frac{n+1}{2},\frac{n-2k+1}{2}\right),
\end{align*}
which infers from \eqref{h3f51} that
\begin{align*}
h_3\leq&\Sigma^{k-1}_{j=0}(2j+2)C^{2j+1}_{n}B\left(\frac{n+1}{2},\frac{n-2j+1}{2}\right)a^{2j+1}\\
&+(n+3)a^{2k}C^{2k}_{n}B\left(\frac{n+1}{2},\frac{n-2k+1}{2}\right).
\end{align*}
This together with the second result of Lemma \ref{lem65} and \eqref{h1h2leq'} justifies $h_1h_2-h^{2}_3\ge0.$

In conclusion, we have shown $A_{mm}\ge0$ and $A_{\rho\rho}A_{mm}-A^{2}_{\rho m}\ge0$, which completes the proof.

\end{proof}

In the following, we address the convexity of another function, based on the relationship between $A, \rho^{\gamma+1}, m^2$ and $B$.
\begin{lemma}\label{lem69}
Let $g(\xi)=\frac{(\gamma-1)^2}{2\gamma(\gamma+1)}|\xi|^{\frac{2\gamma}{\gamma-1}}$ with $\gamma\in(1,2]$ and $B=\rho^{\theta+1}\int g'z\left(1-z^2\right)^\lambda \mathrm{d} z$. Then, there exist three positive constants $M_0$, $M_1$ and $M_2$, such that
$$
D:=M_0A+M_1\rho^{\gamma+1}+M_2m^2-B
$$
is convex in $(\rho, m)$ for all $\rho, m$, and $D_*\ge0$.
\end{lemma}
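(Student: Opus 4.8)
The plan is to establish convexity of $D$ on $\{\rho\ge 0\}$; then $D_*\ge 0$ follows immediately, since $D_*$ is the first--order Taylor remainder of a convex function and hence nonnegative (the degenerate cases $\rho=0$ or $\bar\rho=0$ being recovered directly from the definition via Taylor's theorem and the bound $|m|\le C\rho$, exactly as was done for $A_*$). Since $D\in C^2(\{\rho>0\})$, convexity there amounts to $D_{mm}\ge 0$, $D_{\rho\rho}\ge 0$ and $D_{\rho\rho}D_{mm}-D_{\rho m}^2\ge 0$ pointwise, and the boundary $\rho=0$ is handled by continuity.

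First I would put $D$ in a form adapted to Lemma \ref{lem67}. From the identity $A+B=\frac{2\gamma}{\gamma-1}\eta$ established in the proof of Lemma \ref{lem67} (equivalently $B=\frac{2\gamma}{\gamma-1}\eta-A$) one obtains
$$
D=(M_0+1)A+M_1\rho^{\gamma+1}+M_2m^2-\tfrac{2\gamma}{\gamma-1}\eta=\psi+M_1\rho^{\gamma+1}+M_2m^2,
$$
where $\psi=c_0A-c_1\eta$ with $c_0=M_0+1$, $c_1=\frac{2\gamma}{\gamma-1}$, so that
$$
D_{mm}=\psi_{mm}+2M_2,\qquad D_{\rho m}=\psi_{\rho m},\qquad D_{\rho\rho}=\psi_{\rho\rho}+\gamma(\gamma+1)M_1\rho^{\gamma-1}.
$$
Lemma \ref{lem67} then gives closed expressions for $\psi_{mm}$ and for $\theta^{-2}\rho^{-2\theta}(\psi_{\rho\rho}\psi_{mm}-\psi_{\rho m}^2)$ as integrals of $h(a+z)=|a+z|^{\frac{2}{\gamma-1}}$ against powers of $1-z^2$, where $a=u/\rho^\theta$. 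Since $A,\eta,\rho^{\gamma+1},m^2$ are homogeneous of one and the same degree under the scaling $\rho\mapsto\Lambda^{1/\theta}\rho$, $u\mapsto\Lambda u$, the definiteness of the Hessian of $D$ depends only on $a$, and by the parity relations (Lemma \ref{lem5}) only on $|a|$; hence it suffices to work with $\rho=1$ and $a\in[0,\infty)$, which turns $\psi_{mm},\psi_{\rho\rho},\psi_{\rho m}$ and the determinant into explicit functions of $a$.

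Next I would fix the constants in order. Using the Beta--function identity $2\lambda\int_{-1}^1(1-z^2)^{\lambda-1}z^2\,dz=\int_{-1}^1(1-z^2)^\lambda\,dz$, the relation $A=\frac{2\gamma}{\gamma-1}\eta-B$, and the large--$a$ asymptotics of $\eta_{\rho\rho},\eta_{mm},\eta_{\rho m}$ read off from \eqref{Hessiaeta}, one verifies that $\psi_{mm}(a),\psi_{\rho\rho}(a)\to+\infty$ as $a\to\infty$, each with leading coefficient a positive multiple of $M_0$, while $\psi_{mm}(0)=2\big(M_0-\tfrac1{\gamma-1}\big)\int_{-1}^1|z|^{\frac2{\gamma-1}}(1-z^2)^\lambda dz\ge 0$ provided $M_0\ge\frac1{\gamma-1}$; fix such an $M_0$. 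On every compact range $a\in[0,R]$ the functions $\psi_{mm},\psi_{\rho\rho},\psi_{\rho m}$ are bounded, so choosing $M_2$ large (so that $\psi_{mm}+2M_2\ge 0$ on all of $[0,\infty)$) and then $M_1$ large, the expansion
$$
D_{\rho\rho}D_{mm}-D_{\rho m}^2=(\psi_{\rho\rho}\psi_{mm}-\psi_{\rho m}^2)+\gamma(\gamma+1)M_1\,\psi_{mm}+2M_2\,\psi_{\rho\rho}+2\gamma(\gamma+1)M_1M_2
$$
shows that the genuinely positive term $2\gamma(\gamma+1)M_1M_2$ dominates all the others on $[0,R]$, and likewise $D_{\rho\rho}=\psi_{\rho\rho}+\gamma(\gamma+1)M_1\ge 0$ there. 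That no uncontrollable negativity survives in $\psi_{\rho\rho}\psi_{mm}-\psi_{\rho m}^2$ is precisely what Lemma \ref{lem67} (which isolates the bad part as a bounded multiple of a sign--definite quantity) combined with the convexity of $A$ from Lemma \ref{lem7} provides; the latter is available only for $k\ge 4$, i.e. $n=\frac2{\gamma-1}>7$, i.e. $1<\gamma<\frac97$, which is the origin of that restriction.

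The genuine obstacle is the uniform control for large $a$: there $\psi_{mm}$ and $\psi_{\rho\rho}$ grow without bound, so the compact argument does not apply, and moreover the leading terms of $\psi_{\rho\rho}\psi_{mm}$ and of $\psi_{\rho m}^2$ coincide and cancel, so that the sign of $D_{\rho\rho}D_{mm}-D_{\rho m}^2$ at infinity is decided by the next--order contributions in the $h$--integrals coming from Lemma \ref{lem67}. Completing this asymptotic analysis — together with the parallel ones for $D_{\rho\rho}$ and $D_{mm}$ — is the computation--heavy part, carried out along the lines of the case analysis in the proof of Lemma \ref{lem7} and, for $1<\gamma<\frac97$, the polynomial inequalities collected in the Appendix; once it is done one concludes that the Hessian of $D$ is positive semidefinite everywhere, hence $D$ is convex, and therefore $D_*\ge 0$.
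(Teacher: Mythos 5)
Your setup coincides with the paper's: you rewrite $B=\frac{2\gamma}{\gamma-1}\eta-A$, reduce matters to $\psi=c_0A-c_1\eta$ with $c_1=\frac{2\gamma}{\gamma-1}$ plus the manifestly convex terms $M_1\rho^{\gamma+1}+M_2m^2$, use the scaling/parity reduction to a single parameter $a=u/\rho^\theta\ge0$, and on a bounded range of $a$ you absorb the bounded quantities $\psi_{\rho\rho},\psi_{\rho m},\psi_{mm}$ by taking $M_1,M_2$ large (together with the convexity of $A$ from Lemma \ref{lem7}); this is essentially the paper's Case $0\le a\le 3$, where $|a+z|^{\frac{2}{\gamma-1}}\le 4^{\frac{2}{\gamma-1}}$ makes everything explicit. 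However, the heart of the lemma is the unbounded range of $a$, and there your proposal has a genuine gap: you only assert that the sign of $D_{\rho\rho}D_{mm}-D_{\rho m}^2$ for large $a$ "is decided by the next-order contributions" and that the required analysis can be "carried out along the lines of" Lemma \ref{lem7} and the Appendix, without actually performing it. This is not a routine verification: the $a^{2}$-in-$u$ leading blocks of $\psi_{\rho\rho}\psi_{mm}$ and $\psi_{\rho m}^2$ do cancel, the surviving coefficient is positive only for suitable $c_0$ (one needs, in effect, $\frac{2}{\gamma-1}(M_0+1)>\frac{2\gamma}{\gamma-1}$), and an asymptotic statement "at infinity" does not by itself mesh with a compact-set argument unless you produce a uniform threshold and uniform lower bounds. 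Moreover, pointing to the Appendix is misleading: Lemmas \ref{lem62}--\ref{lem65} serve the convexity of $A$ alone and say nothing about $c_0A-c_1\eta$ for large $a$.

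For comparison, the paper closes exactly this step by a short, fully explicit argument rather than asymptotics: it fixes $M_0+1=M_3=\frac{2\gamma}{\gamma-1}=c_1$, so that in the formula of Lemma \ref{lem67} the troublesome $\int h\,(1-z^2)^\lambda z\,dz$ contribution to the off-diagonal term drops out ($\tilde D_3=M_3a\int h'(1-z^2)^\lambda z\,dz$), and then, for $a>3$, a Cauchy--Schwarz estimate combined with the elementary bound $\int(a+z)^{\frac{4-2\gamma}{\gamma-1}}(a+z)(1-z^2)^\lambda dz\ge(a-1)\int(a+z)^{\frac{4-2\gamma}{\gamma-1}}(1-z^2)^\lambda dz$ reduces positivity of the determinant to the numerical inequality $2a^2(a^2-2a-1)-2>0$, while $\tilde D_{mm}>0$ follows from \eqref{ka}. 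Until you supply a concrete argument of this kind (or a worked-out uniform asymptotic expansion with explicit remainder control) for the large-$a$ regime, the proposed proof does not establish the convexity of $D$, and hence not $D_*\ge0$.
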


\begin{proof}
First we observe
\begin{align*}
D
=&(M_0+1)A+M_1\rho^{\gamma+1}+M_2m^2-\frac{2\gamma}{\gamma-1}\eta\\
=&M_3A+\frac{M_4}{\gamma(\gamma+1)}\rho^{\gamma+1}+M_2m^2-\frac{2\gamma}{\gamma-1}\eta,
\end{align*}
with $M_3=M_0+1$ and $M_4=M_1(\gamma+1)\gamma$. Then, we distinguish two cases.

\vspace{4pt}
\noindent\textbf{$Case~ I: a>3$}. It suffices to prove that
$$
\tilde{D}=M_3A-\frac{2\gamma}{\gamma-1}\eta
$$
is convex in $(\rho, m)$ for all $\rho, m$, since the convexity of $\rho^{\gamma+1}$ and $m^2$ is obvious. From Lemma \ref{lem67}, with $c_0=M_3$ and $c_1=\frac{2\gamma}{\gamma-1}$, we get
\begin{align*}
&\theta^{-2}\rho^{-2\theta}\left(\tilde{D}_{\rho\rho}\tilde{D}_{mm}-\tilde{D}^2_{\rho m}\right)\\
=&\left[\left(\frac{3\gamma-1}{\gamma-1}M_3-\frac{2\gamma}{\gamma-1}\right)\int_{-1}^1h\left(1-z^2\right)^\lambda d z-2\lambda M_3\int_{-1}^1  h\left(1-z^2\right)^{\lambda-1}z^2 d z\right]\\
&\times\left[\left(\frac{\gamma+1}{\gamma-1}M_3-\frac{2\gamma}{\gamma-1}\right)\int_{-1}^1 h\left(1-z^2\right)^\lambda d z-2\lambda M_3\int_{-1}^1 h\left(1-z^2\right)^{\lambda-1}z^2 d z\right]\\
&-\left[\left(\frac{3\gamma-1}{\gamma-1}M_3-\frac{2\gamma}{\gamma-1}\right)\int_{-1}^1h\left(1-z^2\right)^{\lambda} zdz-2\lambda M_3\int_{-1}^1h\left(1-z^2\right)^{\lambda-1}z^3 dz\right]^2\\
=&\tilde{D}_1\tilde{D}_2-\tilde{D}^2_3.
\end{align*}
Integrating by parts and choosing
$
M_3=\frac{2\gamma}{\gamma-1},
$
we see
\begin{align*}
\tilde{D}_1&=\left(2M_3-\frac{2\gamma}{\gamma-1}\right)\int_{-1}^1h\left(1-z^2\right)^\lambda d z+M_3a\int_{-1}^1h'\left(1-z^2\right)^\lambda d z\\
&=M_3\int_{-1}^1h\left(1-z^2\right)^\lambda d z+M_3a\int_{-1}^1h'\left(1-z^2\right)^\lambda d z,
\end{align*}
$$
\tilde{D}_2=M_3a\int_{-1}^1h'\left(1-z^2\right)^\lambda d z-M_3\int_{-1}^1h\left(1-z^2\right)^\lambda d z,
$$
and
\begin{align*}
\tilde{D}_3=&\left(M_3-\frac{2\gamma}{\gamma-1}\right)\int_{-1}^1h\left(1-z^2\right)^\lambda zd z+M_3a\int_{-1}^1h'\left(1-z^2\right)^\lambda zd z\\
=&M_3a\int_{-1}^1h'\left(1-z^2\right)^\lambda zd z.
\end{align*}
As a result, by the Cauchy-Schwarz inequality and
$$
h'(a+z)=\frac{2}{\gamma-1}(a+z)^{\frac{2}{\gamma-1}-1},
$$
we derive
\begin{align*}
&M^{-2}_{3}\theta^{-2}\rho^{-2\theta}\left(\tilde{D}_{\rho\rho}\tilde{D}_{mm}-\tilde{D}^2_{\rho m}\right)\\
=&\left(a\int_{-1}^1h'\left(1-z^2\right)^\lambda d z\right)^2-\left(\int_{-1}^1h\left(1-z^2\right)^\lambda d z\right)^2-\left(a\int_{-1}^1h'\left(1-z^2\right)^\lambda zd z\right)^2\\
=&\left(\frac{2a}{\gamma-1}\int_{-1}^1(a+z)^{\frac{2}{\gamma-1}-1}\left(1-z^2\right)^\lambda d z\right)^2-\left(\int_{-1}^1(a+z)(a+z)^{\frac{2}{\gamma-1}-1}\left(1-z^2\right)^\lambda d z\right)^2\\
&-\left(\frac{2a}{\gamma-1}\int_{-1}^1(a+z)^{\frac{2}{\gamma-1}-1}\left(1-z^2\right)^\lambda zd z\right)^2\\
\ge& \left[\left(\frac{2a}{\gamma-1}\right)^2-2a^2\right]\left(\int_{-1}^1(a+z)^{\frac{3-\gamma}{\gamma-1}}\left(1-z^2\right)^\lambda d z\right)^2\\
&-\left[\left(\frac{2a}{\gamma-1}\right)^2+2\right]
\left(\int_{-1}^1(a+z)^{\frac{3-\gamma}{\gamma-1}}\left(1-z^2\right)^\lambda zd z\right)^2,
\end{align*}
Moreover,
\begin{align*}
\int_{-1}^1(a+z)^{\frac{3-\gamma}{\gamma-1}}\left(1-z^2\right)^\lambda zd z&=\frac{1}{2(\lambda+1)}\frac{3-\gamma}{\gamma-1}
\int_{-1}^1(a+z)^{\frac{4-2\gamma}{\gamma-1}}\left(1-z^2\right)^{\lambda+1} d z\\
&=\frac{3-\gamma}{\gamma+1}\int_{-1}^1(a+z)^{\frac{4-2\gamma}{\gamma-1}}\left(1-z^2\right)^{\lambda+1} d z\\
&\le\int_{-1}^1(a+z)^{\frac{4-2\gamma}{\gamma-1}}\left(1-z^2\right)^{\lambda} d z,
\end{align*}
by $\lambda=\frac{3-\gamma}{2(\gamma+1)}$ and $\gamma>1$. Consequently, we obtain
\begin{align*}
&M^{-2}_{3}\theta^{-2}\rho^{-2\theta}\left(\tilde{D}_{\rho\rho}\tilde{D}_{mm}-\tilde{D}^2_{\rho m}\right)\\
\ge& \left[\left(\frac{2a}{\gamma-1}\right)^2-2a^2\right]\left(\int_{-1}^1(a+z)^{\frac{4-2\gamma}{\gamma-1}}(a+z)\left(1-z^2\right)^\lambda d z\right)^2\\
&-\left[\left(\frac{2a}{\gamma-1}\right)^2+2\right]\left(\int_{-1}^1(a+z)^{\frac{4-2\gamma}{\gamma-1}}
\left(1-z^2\right)^{\lambda} d z\right)^2\\
=&\left\{\left[\left(\frac{2a}{\gamma-1}\right)^2-2a^2\right](a-1)^2-\left[\left(\frac{2a}{\gamma-1}\right)^2+2\right]\right\}
\left(\int_{-1}^1(a+z)^{\frac{4-2\gamma}{\gamma-1}}\left(1-z^2\right)^{\lambda} d z\right)^2\\
>&0,
\end{align*}
where the last inequality is from
\begin{align*}
&\left[\left(\frac{2a}{\gamma-1}\right)^2-2a^2\right](a-1)^2-\left[\left(\frac{2a}{\gamma-1}\right)^2+2\right]\\
=&\left(\frac{2a}{\gamma-1}\right)^2(a^2-2a)-2a^2(a-1)^2-2\\
=&2a^2(a^2-2a-1)-2>0,
\end{align*}
by $a>3$ and $\gamma\le2$. In addition, it follows from
$$
c_0=M_3=c_1=\frac{2\gamma}{\gamma-1}
$$
and \eqref{psimm} that
\begin{align*}
\tilde{D}_{mm}=&\frac{2\gamma}{\gamma-1}\int_{-1}^1|a+z|^\frac{2}{\gamma-1}\left(1-z^2\right)^\lambda d z+\frac{4\gamma^2a}{(\gamma-1)^2}\int_{-1}^1\frac{|a+z|^\frac{2}{\gamma-1}}{a+z}\left(1-z^2\right)^{\lambda}dz>0,
\end{align*}
by \eqref{ka}. Thus we show the convexity for this case.

\vspace{4pt}
\noindent\textbf{$Case~ II: 0\leq a\le3$}.
We are going to prove that
$$
\hat{D}=\frac{M_4}{\gamma(\gamma+1)}\rho^{\gamma+1}+M_2m^2-\frac{2\gamma}{\gamma-1}\eta,
$$
is convex in $(\rho, m)$ for all $\rho, m$, which is relatively easy to prove compared to the previous one, since $\forall z\in[-1,1]$,
$$
h=\left|a+z\right|^{\frac{2}{\gamma-1}}\leq |1+a|^{\frac{2}{\gamma-1}}\leq 4^{\frac{2}{\gamma-1}},
$$
and
$$
(-a+z\theta)^{2}\le(3+\theta)^2\leq 16,
$$
by $\theta=\frac{\gamma-1}{2}$ and $\gamma\in(1,2]$.

From \eqref{Hessiaeta} and with $h=h$, we compute the Hessian matrix of $\hat{D}$ as follows
\begin{align*}
\hat{D}_{\rho\rho}=&M_4\rho^{\gamma-1}-\frac{2\gamma}{\gamma-1}\eta_{\rho\rho}\\
=&\rho^{2\theta}\left\{M_4-\frac{2\gamma}{\gamma-1}\left[\int^{1}_{-1}\theta^2h \left(1-z^2\right)^{\lambda+1} dz  +\int^{1}_{-1}\left(-a+z\theta \right)^2 h\left(1-z^2\right)^\lambda d z\right]\right\}\\
\ge&\rho^{2\theta}\left(M_4-\frac{2\gamma}{\gamma-1}4^{\frac{2}{\gamma-1}+1}-\frac{2\gamma}{\gamma-1}4^{\frac{2}{\gamma-1}+3}\right)\\
\ge &\rho^{2\theta}\frac{M_4}{2},
\end{align*}
provided that
\begin{equation}\label{M4}
M_4>\frac{2\gamma}{\gamma-1}4^{\frac{2}{\gamma-1}+4},
\end{equation}
\begin{align*}
\hat{D}_{mm}&=2M_2-\frac{2\gamma}{\gamma-1}\eta_{m m}=2M_2- \frac{2\gamma}{\gamma-1}\int^{1}_{-1} h\left(1-z^2\right)^\lambda d z\\
&\ge2M_2-\frac{2\gamma}{\gamma-1}4^{\frac{2}{\gamma-1}+1}\ge M_2,
\end{align*}
provided that
\begin{equation}\label{M2}
M_2>\frac{2\gamma}{\gamma-1}4^{\frac{2}{\gamma-1}+2},
\end{equation}
and
\begin{align*}
\big|\hat{D}_{\rho m}\big|&=\bigg|\frac{2\gamma}{\gamma-1}\eta_{\rho m}\bigg|\\
&=\bigg|\frac{2\gamma}{\gamma-1}\int^{1}_{-1}\left(-\frac{m}{\rho^2}+z\theta\rho^{\theta-1}\right) g'' \left(1-z^2\right)^\lambda dz\bigg|\\
&=\bigg|\frac{2\gamma}{\gamma-1}\rho^{\theta}\int^{1}_{-1}\left(-a+z\theta\right) h \left(1-z^2\right)^\lambda dz\bigg|\\
&\le\frac{2\gamma}{\gamma-1}\rho^{\theta}4^{\frac{2}{\gamma-1}+2},
\end{align*}
which infers that $D_{mm}>0$ and
$$
\hat{D}_{\rho \rho}\hat{D}_{m m}-\hat{D}^2_{\rho m}\ge 7\rho^{2\theta}\left(\frac{2\gamma}{\gamma-1}4^{\frac{2}{\gamma-1}+2}\right)^2>0,
$$
by \eqref{M4} and \eqref{M2}. Therefore, we get the convexity of $\hat{D}$.

In conclusion, we complete the proof.
\end{proof}
\subsubsection{Final stage of the proof}
Making use of Lemma \ref{lem2}, Lemma \ref{lem69} and \eqref{eta*ga+1}, we deduce
\begin{align*}
&\int^{t}_{0}\int_{-\infty}^{+\infty}(1+\tau)^{\nu(\varepsilon)-1}B_*dxd\tau\\
=&\int^{t}_{0}\int_{-\infty}^{+\infty}(1+\tau)^{\nu(\varepsilon)-1}
\left[B_*-M_0A_*-M_1\left(\rho^{\gamma+1}\right)_*-M_2(m^2)_*\right]dxd\tau\\
&+\int^{t}_{0}\int_{-\infty}^{+\infty}(1+\tau)^{\nu(\varepsilon)-1}\left[M_0A_*+M_1\left(\rho^{\gamma+1}\right)_*+M_2(m^2)_*\right]dxd\tau\\
\leq& C+M_1\int^{t}_{0}\int_{-\infty}^{+\infty}(1+\tau)^{\nu(\varepsilon)-1}\left[\rho^{\gamma+1}-\bar{\rho}^{\gamma+1}-
(\gamma+1)\bar{\rho}^{\gamma}(\rho-\bar{\rho})\right]dxd\tau\\
&+M_2\int^{t}_{0}\int_{-\infty}^{+\infty}(1+\tau)^{\nu(\varepsilon)-1}\left(m^2-\bar{m}^2-2\bar{m}(m-\bar{m})\right)dxd\tau\\
\leq& C+C\int^{t}_{0}\int_{-\infty}^{+\infty}(1+\tau)^{\nu(\varepsilon)-1}
\left[\left(\rho^\gamma-\bar{\rho}^\gamma\right)(\rho-\bar{\rho})+\left(m-\bar{m}\right)^2\right]dxd\tau\\
\leq& C,
\end{align*}
where the last inequality is from Lemma \ref{lem3} and $y_t=m-\bar{m}$. Then, it follows from Lemma \ref{lem7} and \eqref{nuvar} that
\begin{equation}\label{vargamma+1}
\begin{aligned}
&(1+t)^{\nu(\varepsilon)} \int_{-\infty}^{+\infty} \eta_*(x, t)dx\\
\leq& C+C\int^{t}_{0}\int_{-\infty}^{+\infty}(1+\tau)^{\nu(\varepsilon)-1}B_*dxd\tau+C\int^{t}_{0}(1+\tau)^{\nu(\varepsilon)}(J_1+J_2+J_3)d\tau\\
\leq& C+C\int^{t}_{0}(1+\tau)^{\nu(\varepsilon)}(J_1+J_2+J_3)d\tau.
\end{aligned}
\end{equation}
Hence, it suffices to estimate
$$\int^{t}_{0}(1+\tau)^{\nu(\varepsilon)}\left(J_1+J_2+J_3\right)d\tau.$$

Using \eqref{J1} and
\begin{align*}
\frac{3\gamma+1}{\gamma+1}-\nu(\varepsilon)-1=\frac{2\gamma}{\gamma+1}-\frac{\gamma^2+2\gamma}{(\gamma+1)^2}+2\varepsilon=
\frac{\gamma^2}{(\gamma+1)^2}+2\varepsilon>0,
\end{align*}
we have
\begin{equation}\label{J1'}
\int^{t}_{0}(1+\tau)^{\nu(\varepsilon)}J_1d\tau\leq C \int^{t}_{0}\left[(1+t)^{\nu(\varepsilon)-\frac{3\gamma+1}{\gamma+1}}+C(1+t)^{\nu(\varepsilon)-1}\|y_t\|^2\right]d\tau \leq C,
\end{equation}
by Lemma \ref{lem3}. As for $J_2$, it follows from \eqref{J2} that
\begin{equation}\label{J2'}
\begin{aligned}
\int^{t}_{0}(1+\tau)^{\nu(\varepsilon)}J_2d\tau&\leq \int^{t}_{0}(1+\tau)^{\nu(\varepsilon)-\frac{5\gamma^2+6\gamma+3}{2(\gamma+1)^2}}||\rho-\bar{\rho}||_{L^{\gamma+1}}d\tau\\
&\leq \int^{t}_{0}(1+\tau)^{\left[\nu(\varepsilon)-\frac{5\gamma^2+6\gamma+3}{2(\gamma+1)^2}\right]\cdot\frac{\gamma+1}{\gamma}}d\tau+
\int^{t}_{0}||\rho-\bar{\rho}||^{\gamma+1}_{L^{\gamma+1}}d\tau
\leq C,
\end{aligned}
\end{equation}
due to Lemma \ref{lem2}, Lemma \ref{lem3} and
\begin{align*}
\frac{5\gamma^2+6\gamma+3}{2(\gamma+1)^2}-\frac{\gamma}{\gamma+1}-\nu(\varepsilon)&=\frac{3\gamma^2+4\gamma+3}{2(\gamma+1)^2}-
\frac{\gamma^2+2\gamma}{(\gamma+1)^2}+2\varepsilon\\
&=\frac{\gamma^2+3}{2(\gamma+1)^2}+2\varepsilon>0.
\end{align*}

For $J_3$, we note
$$
J_3=-\int_{-\infty}^{+\infty}(\eta_{\bar{m}})_x(P_*+Q_*)dx\leq \int_{-\infty}^{+\infty}(P_*+Q_*)\left(|\bar{u}|^{\frac{2}{\gamma-1}}+|\bar{u}|^2|\right),
$$
which implies that
\begin{equation}\label{J3'}
\begin{aligned}
\int^{t}_{0}(1+\tau)^{\nu(\varepsilon)}J_3d\tau&\leq C\int^{t}_{0}(1+\tau)^{\nu(\varepsilon)}\int_{-\infty}^{+\infty}(P_*+Q_*)\left(|\bar{u}|^{\frac{2}{\gamma-1}}+|\bar{u}|^2|
\bar{\rho}|^{2-\gamma}\right)dxd\tau\\
&=:J^{1}_{3}+J^{2}_{3}+J^{3}_{3}+J^{4}_{3}.
\end{aligned}
\end{equation}

Next, we estimate $J^{1}_{3},\dots,J^{4}_{3}$. By lemmas \ref{lem2} and \ref{lem3}, it holds that
\begin{equation}\label{J31}
\begin{aligned}
J^{1}_{3}&=C\int^{t}_{0}(1+\tau)^{\nu(\varepsilon)}\int_{-\infty}^{+\infty}P_*|\bar{u}|^{\frac{2}{\gamma-1}}dxd\tau\\
&\leq C\int^{t}_{0}(1+\tau)^{\nu(\varepsilon)}\int_{-\infty}^{+\infty}[(\rho^{\gamma}-\bar{\rho}^{\gamma})
(\rho-\bar{\rho})]^{\frac{\gamma}{\gamma+1}}|\bar{u}|^{\frac{2}{\gamma-1}}dxd\tau\\
&\leq C\int^{t}_{0}(1+\tau)^{\nu(\varepsilon)}\left(\int_{-\infty}^{+\infty}(\rho^{\gamma}-\bar{\rho}^{\gamma})
(\rho-\bar{\rho})dx\right)^{\frac{\gamma}{\gamma+1}}  \left(\int_{-\infty}^{+\infty}|\bar{u}|^{\frac{2\gamma+2}{\gamma-1}}dx\right)^{\frac{1}{\gamma+1}} d\tau\\
&\leq C\int^{t}_{0}(1+\tau)^{\left(\nu(\varepsilon)-\frac{2\gamma^2+\gamma+1}{(\gamma+1)^2(\gamma-1)}\right)(\gamma+1)}d\tau+ C\int^{t}_{0}\int_{-\infty}^{+\infty}(\rho^{\gamma}-\bar{\rho}^{\gamma})(\rho-\bar{\rho})  dxd\tau\\
&\leq C,
\end{aligned}
\end{equation}
due to
\begin{align*}
\frac{2\gamma^2+\gamma+1}{(\gamma+1)^2(\gamma-1)}-\nu(\varepsilon)-\frac{1}{\gamma+1}&=\frac{\gamma^2+\gamma+2}{(\gamma+1)^2(\gamma-1)}-
\frac{\gamma^2+2\gamma}{(\gamma+1)^2}+2\varepsilon\\
&=-\frac{\gamma^3-3\gamma-2}{(\gamma+1)^2(\gamma-1)}+2\varepsilon\\
&=-\frac{\gamma-2}{\gamma-1}+2\varepsilon>0,\quad \gamma\in(1,2].
\end{align*}
Similarly,
\begin{equation}\label{J32}
\begin{aligned}
J^{2}_{3}&=C\int^{t}_{0}(1+\tau)^{\nu(\varepsilon)}\int_{-\infty}^{+\infty}P_*|\bar{u}|^2|\bar{\rho}|^{2-\gamma}dxd\tau\\
&\leq C\int^{t}_{0}(1+\tau)^{\nu(\varepsilon)}\int_{-\infty}^{+\infty}[(\rho^{\gamma}-\bar{\rho}^{\gamma})
(\rho-\bar{\rho})]^{\frac{\gamma}{\gamma+1}}|\bar{u}|^2|\bar{\rho}|^{2-\gamma}dxd\tau\\
&\leq C\int^{t}_{0}(1+\tau)^{\nu(\varepsilon)}\left(\int_{-\infty}^{+\infty}(\rho^{\gamma}-\bar{\rho}^{\gamma})
(\rho-\bar{\rho})dx\right)^{\frac{\gamma}{\gamma+1}}  \left(\int_{-\infty}^{+\infty}\left(|\bar{u}|^2|\bar{\rho}|^{2-\gamma}\right)^{\gamma+1}dx\right)^{\frac{1}{\gamma+1}}d\tau \\
&\leq C\int^{t}_{0}(1+\tau)^{\left(\nu(\varepsilon)-\frac{(\gamma+2)(\gamma+1)-1}{(\gamma+1)^2}\right)(\gamma+1)}d\tau+ C\int^{t}_{0}\int_{-\infty}^{+\infty}(\rho^{\gamma}-\bar{\rho}^{\gamma})(\rho-\bar{\rho})  dxd\tau\\
&\leq C,
\end{aligned}
\end{equation}
where the last inequality is from Lemma \ref{lem3} and
\begin{align*}
\frac{(\gamma+2)(\gamma+1)-1}{(\gamma+1)^2}-\nu(\varepsilon)-\frac{1}{\gamma+1}=2\varepsilon>0.
\end{align*}

For $J_{3}^{3}$ and $J_{3}^{4}$, we observe
\begin{align*}
\frac{2\gamma}{\gamma^2-1}-\frac{\gamma+2}{\gamma+1}=\frac{2\gamma-(\gamma+2)(\gamma-1)}{\gamma^2-1}=
-\frac{(\gamma+1)(\gamma-2)}{\gamma^2-1}\ge0,\quad \gamma\in(1,2].
\end{align*}
Hence, it is clear that
\begin{equation}\label{J33}
\begin{aligned}
J_{3}^{3}+J_{3}^{4}=&C\int^{t}_{0}(1+\tau)^{\nu(\varepsilon)}\int_{-\infty}^{+\infty}Q_*\left(|\bar{u}|^{\frac{2}{\gamma-1}}+
|\bar{u}|^2|\bar{\rho}|^{2-\gamma}\right)dxd\tau\\
\leq& C\int^{t}_{0}(1+\tau)^{\nu(\varepsilon)}||Q_*||_{L^{1}}\left(|||\bar{u}|^{\frac{2}{\gamma-1}}||_{L^{\infty}}+
|||\bar{u}|^2|\bar{\rho}|^{2-\gamma}||_{L^{\infty}}\right)d\tau\\
\leq &C\int^{t}_{0}(1+\tau)^{\nu(\varepsilon)}\left[(1+t)^{-\frac{2\gamma}{\gamma^2-1}}+
(1+t)^{-\frac{\gamma+2}{\gamma+1}}\right]d\tau\\
\leq &C\int^{t}_{0}(1+\tau)^{\nu(\varepsilon)-\frac{\gamma+2}{\gamma+1}}\int_{-\infty}^{+\infty}Q_*dxd\tau\\
\leq &C,
\end{aligned}
\end{equation}
where the last inequality is from $\nu(\varepsilon)<1<\frac{\gamma+2}{\gamma+1}$ and \eqref{PQC}.

Then, Plugging \eqref{J31}-\eqref{J33} into \eqref{J3'} gives
$$
\int^{t}_{0}(1+\tau)^{\nu(\varepsilon)}J_3d\tau\leq C.
$$
Hence, it follows from \eqref{J1'},\eqref{J2'}, \eqref{vargamma+1} and \eqref{nu} that
\begin{equation}\label{nuvareta*}
 \int_{-\infty}^{+\infty} \eta_*(x, t)dx\leq C(1+t)^{-\nu(\varepsilon)}\leq C(1+t)^{-\frac{\gamma^2+2\gamma}{(\gamma+1)^2}+2 \varepsilon}.
\end{equation}

Now, we point out the relationship between $\eta_*$ and $(\rho^{\gamma+1})_{*}$ in the following lemma.

\begin{lemma}\label{lem610}
Let
$$
\eta=\rho\int^{1}_{-1}g\left(1-z^2\right)^{\lambda}dz,
$$
with $g(\xi)=\frac{(\gamma-1)^2}{2\gamma(\gamma+1)}|\xi|^{\frac{2\gamma}{\gamma-1}}$. Then, there exists a positive constant $C_0$ such that
$$
E:=\eta-C_0\rho^{\gamma+1}
$$
is convex in $(\rho, m)$ for all $\rho, m$, and $E_*\ge0$.
\end{lemma}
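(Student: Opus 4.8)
The plan is to imitate the method used for Lemma \ref{lem69}, now with only the single coupling parameter $C_0$, splitting the argument according to the size of $a=u/\rho^{\theta}$. Write $\eta=\frac{\gamma-1}{2\gamma}(A+B)$, so that $E=\eta-C_0\rho^{\gamma+1}$. The key computational input is again Lemma \ref{lem67}, which expresses the determinant $\eta_{\rho\rho}\eta_{mm}-\eta_{\rho m}^2$ in terms of integrals of $h(a+z)=|a+z|^{2/(\gamma-1)}$ against powers of $(1-z^2)$. Since $\rho^{\gamma+1}$ is convex (indeed $(\rho^{\gamma+1})_{\rho\rho}=\gamma(\gamma+1)\rho^{\gamma-1}\ge0$ and $(\rho^{\gamma+1})_{mm}=(\rho^{\gamma+1})_{\rho m}=0$), subtracting $C_0\rho^{\gamma+1}$ only helps the $\rho\rho$-entry, so I would aim to show $E_{mm}\ge0$ (which is immediate, $E_{mm}=\eta_{mm}\ge0$ by \eqref{ka}, since $C_0\rho^{\gamma+1}$ does not involve $m$) and $E_{\rho\rho}E_{mm}-E_{\rho m}^2\ge0$.

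First I would treat the regime $0\le a\le a_0$ for a fixed large constant $a_0$ (the exact threshold to be pinned down, $a_0=3$ should suffice as in Lemma \ref{lem69}). Here $h=|a+z|^{2/(\gamma-1)}\le(1+a_0)^{2/(\gamma-1)}$ is bounded, and $(-a+z\theta)^2\le(a_0+\theta)^2$ is bounded, for $\gamma\in(1,2]$. Using the Hessian formulas \eqref{Hessiaeta} one gets $\eta_{\rho\rho}\le C\rho^{2\theta}$, $|\eta_{\rho m}|\le C\rho^{\theta}$, $\eta_{mm}\le C$ for an explicit constant $C$ depending only on $a_0$ and $\gamma$, while $E_{\rho\rho}=C_0\gamma(\gamma+1)\rho^{\gamma-1}-\eta_{\rho\rho}=\rho^{2\theta}(C_0\gamma(\gamma+1)-\eta_{\rho\rho}\rho^{-2\theta})$. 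Choosing $C_0$ large enough (larger than a universal multiple of the bound on $\eta_{\rho\rho}\rho^{-2\theta}$, and large enough to dominate $\eta_{\rho m}^2/(\rho^{2\theta}\eta_{mm})$) forces $E_{\rho\rho}\ge\tfrac12 C_0\gamma(\gamma+1)\rho^{2\theta}$ and then $E_{\rho\rho}E_{mm}-E_{\rho m}^2>0$. This is the soft case.

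The harder case is $a>a_0$. Here $\rho^{\gamma+1}$ cannot be used as brutally, since $E_{\rho\rho}\rho^{-2\theta}$ is no longer controlled by a fixed constant (as $a\to\infty$ the integrals defining $\eta_{\rho\rho}$ grow). I would proceed as in Case I of Lemma \ref{lem69}: integrate by parts in $z$ to rewrite $\eta_{\rho\rho}\eta_{mm}-\eta_{\rho m}^2$ (equivalently use Lemma \ref{lem67} with $c_0=\frac{\gamma-1}{2\gamma}$, $c_1=\frac{\gamma-1}{2\gamma}$ so that $\psi=\frac{\gamma-1}{2\gamma}(A-\eta)$... — actually the cleaner route is to note $\eta=\frac{\gamma-1}{2\gamma}A+\frac{\gamma-1}{2\gamma}B$ and apply Lemma \ref{lem67} with the pair $(c_0,c_1)$ chosen so that $c_0A-c_1\eta=\eta$, i.e.\ with the determinant of $\eta$ itself expressed through the $h$-integrals). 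For $a>a_0$, using $h'(a+z)=\frac{2}{\gamma-1}(a+z)^{2/(\gamma-1)-1}\ge0$ and the substitution/Cauchy-Schwarz steps of \eqref{h23}--\eqref{h1h2}, one shows that $\eta_{\rho\rho}\eta_{mm}-\eta_{\rho m}^2$ is bounded below by $-C(1+a^2)\rho^{4\theta}\bigl(\int_{-1}^1(a+z)^{(3-\gamma)/(\gamma-1)}(1-z^2)^{\lambda}dz\bigr)^2$ from below, while $(\rho^{\gamma+1})_{\rho\rho}\,\eta_{mm}=\gamma(\gamma+1)\rho^{\gamma-1}\cdot\frac1\rho\int_{-1}^1\rho h(1-z^2)^{\lambda}dz$ grows like $\rho^{2\theta+\gamma-1}\int_{-1}^1(a+z)^{2/(\gamma-1)}(1-z^2)^\lambda dz$, which for large $a$ dominates the negative contribution after choosing $C_0$ large. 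Concretely, $E_{\rho\rho}E_{mm}-E_{\rho m}^2=(\eta_{\rho\rho}-C_0(\rho^{\gamma+1})_{\rho\rho})(... )$ wait — since $(\rho^{\gamma+1})_{\rho m}=0$, one has $E_{\rho\rho}E_{mm}-E_{\rho m}^2=(\eta_{\rho\rho}\eta_{mm}-\eta_{\rho m}^2)+C_0(\rho^{\gamma+1})_{\rho\rho}\eta_{mm}$, wait the sign: $E_{\rho\rho}=\eta_{\rho\rho}-C_0(\rho^{\gamma+1})_{\rho\rho}$...

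Hmm, I need $E$ convex, so I should check the sign of $C_0$: we want to \emph{subtract} a convex function, which hurts convexity, so $C_0$ must be chosen \emph{small}, not large — the point of Lemma \ref{lem610} is that $\eta$ has a little room to spare. So: $E_{\rho\rho}E_{mm}-E_{\rho m}^2=(\eta_{\rho\rho}\eta_{mm}-\eta_{\rho m}^2)-C_0(\rho^{\gamma+1})_{\rho\rho}\eta_{mm}$, and since $\eta_{\rho\rho}\eta_{mm}-\eta_{\rho m}^2$ is strictly positive away from the vacuum and from $a=\infty$ (it vanishes only at $a=0$, where however $\eta_{\rho\rho}$ itself is strictly positive because of the $\int\theta^2\rho^{2\theta-1}g''(1-z^2)^{\lambda+1}dz$ term, which is the term highlighted as "extremely important" after \eqref{Hessiaeta}), while $C_0(\rho^{\gamma+1})_{\rho\rho}\eta_{mm}$ is a bounded perturbation relative to the scale $\rho^{4\theta}$ times a positive $a$-dependent factor. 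The delicate point is uniformity in $a$: I would establish the quantitative lower bound
$$
\eta_{\rho\rho}\eta_{mm}-\eta_{\rho m}^2\ \ge\ c_1(\gamma)\,\rho^{\gamma-1}\eta_{mm}
$$
with a \emph{positive} constant $c_1(\gamma)$ uniform in $a\ge0$ and $\rho>0$ — proved by splitting into $a$ small (where the $(1-z^2)^{\lambda+1}$-term dominates) and $a$ large (Cauchy--Schwarz estimates as above) — and then take $C_0=c_1(\gamma)/(\gamma(\gamma+1))$. Combined with $E_{mm}=\eta_{mm}\ge0$ and $E_{\rho\rho}=\eta_{\rho\rho}-C_0\gamma(\gamma+1)\rho^{\gamma-1}\ge0$ (guaranteed by the same bound since $\eta_{\rho\rho}\ge (\eta_{\rho\rho}\eta_{mm}-\eta_{\rho m}^2)/\eta_{mm}+\eta_{\rho m}^2/\eta_{mm}\ge c_1(\gamma)\rho^{\gamma-1}$), this yields convexity of $E$ on all of $\{\rho>0\}$, hence on $\{\rho\ge0\}$ by continuity, and $E_*\ge0$ follows from Taylor's theorem (Lemma \ref{lem64}) exactly as $A_*\ge0$ and $D_*\ge0$ were deduced from convexity of $A$ and $D$. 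The main obstacle is precisely establishing the uniform-in-$a$ lower bound $\eta_{\rho\rho}\eta_{mm}-\eta_{\rho m}^2\ge c_1(\gamma)\rho^{\gamma-1}\eta_{mm}$ with an \emph{explicit positive} $c_1(\gamma)$; this is where the integration-by-parts identities and Cauchy--Schwarz manipulations of Lemma \ref{lem7}'s proof have to be pushed through, and where the structure $\gamma\in(1,2]$ (so $\lambda>0$, $n=\tfrac{2}{\gamma-1}\ge 2$) is used.
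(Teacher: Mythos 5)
Your structural reductions are sound: $E_{mm}=\eta_{mm}\ge 0$, the identity $E_{\rho\rho}E_{mm}-E_{\rho m}^2=(\eta_{\rho\rho}\eta_{mm}-\eta_{\rho m}^2)-C_0\gamma(\gamma+1)\rho^{\gamma-1}\eta_{mm}$, the (corrected) realization that $C_0$ must be small, and the target inequality $\eta_{\rho\rho}\eta_{mm}-\eta_{\rho m}^2\ge c_1(\gamma)\rho^{\gamma-1}\eta_{mm}$ are all correct and are in substance equivalent to what the paper does. But there is a genuine gap: that uniform-in-$a$ lower bound \emph{is} the entire content of the lemma, and you do not prove it --- you name it as ``the main obstacle'' and defer it to a case split in $a$ modeled on the proof of Lemma \ref{lem7}. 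Moreover, the large-$a$ sketch you wrote before correcting the sign of $C_0$ (a negative lower bound of the form $-C(1+a^2)\rho^{4\theta}(\cdots)^2$ to be dominated ``after choosing $C_0$ large'') points in the wrong direction and does not survive the correction; as written, no quantitative argument for the key estimate is supplied on any range of $a$.

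The missing step has a short proof that is uniform in $a$ and needs no case analysis, and it is what the paper uses. In \eqref{Hessiaeta}, $\eta_{\rho\rho}$ contains the surplus term $\theta^2\rho^{2\theta-1}\int_{-1}^1 g''\left(1-z^2\right)^{\lambda+1}dz$, which is not consumed by the Cauchy--Schwarz proof of convexity of $\eta$. Expanding $g''(u+z\rho^{\theta})$ by Taylor's theorem (Lemma \ref{lem64}) about $u=0$, the first-order term $u\,g^{(3)}(z\rho^{\theta})$ integrates to zero by oddness in $z$, and the remainder is nonnegative because $g^{(4)}\ge 0$; hence for every $u$ (every $a$),
\begin{equation*}
\int_{-1}^1 g''\left(u+z\rho^{\theta}\right)\left(1-z^2\right)^{\lambda+1}dz\;\ge\;\rho\int_{-1}^1|z|^{\frac{2}{\gamma-1}}\left(1-z^2\right)^{\lambda+1}dz .
\end{equation*}
Since $2\theta=\gamma-1$, this yields exactly your bound with $c_1(\gamma)=\theta^2\int_{-1}^1|z|^{\frac{2}{\gamma-1}}\left(1-z^2\right)^{\lambda+1}dz$; taking $C_0\le c_1(\gamma)/\bigl(\gamma(\gamma+1)\bigr)$ gives $E_{\rho\rho}\ge\rho\int_{-1}^1\left(-\tfrac{m}{\rho^2}+z\theta\rho^{\theta-1}\right)^2 g''\left(1-z^2\right)^{\lambda}dz$, and the Cauchy--Schwarz step already used after \eqref{Hessiaeta} then gives $E_{\rho\rho}E_{mm}-E_{\rho m}^2\ge 0$, hence convexity and $E_*\ge 0$. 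Until you supply this (or an equivalent) proof of the uniform margin, your text is an outline rather than a proof.
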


\begin{proof}
We can compute the Hessian matrix of $E$ after simple calculations:
\begin{align*}
E_{\rho\rho}=&\int^{1}_{-1}(\theta^2+\theta) \rho^{\theta-1} g' z\left(1-z^2\right)^\lambda dz  \\
&+\rho\int^{1}_{-1}\left(-\frac{m}{\rho^2}+z\theta \rho^{\theta-1}\right)^2 g''\left(1-z^2\right)^\lambda d z-C_0(\gamma+1)\gamma\rho^{\gamma-1}\\
=&\theta^2\rho^{2\theta-1}\int^{1}_{-1} g''\left(1-z^2\right)^{\lambda+1} dz \\
&+\rho\int^{1}_{-1}\left(-\frac{m}{\rho^2}+z\theta \rho^{\theta-1}\right)^2 g''\left(1-z^2\right)^\lambda d z-C_0(\gamma+1)\gamma\rho^{\gamma-1},\\
E_{mm}=&\eta_{mm}=\frac{1}{\rho} \int^{1}_{-1} g''\left(1-z^2\right)^\lambda d z,\\
E_{\rho m}=&\eta_{\rho m}=\int^{1}_{-1}\left(-\frac{m}{\rho^2}+z\theta\rho^{\theta-1}\right) g'' \left(1-z^2\right)^\lambda dz.
\end{align*}

By Taylor's theorem, it follows from $g''(\xi)=|\xi|^{\frac{2}{\gamma-1}}$ that
\begin{align*}
&\int^{1}_{-1} g''(u+z\rho^{\theta})\left(1-z^2\right)^{\lambda+1} dz\\
=&\int^{1}_{-1}\left[g''(z\rho^{\theta})+ug^{(3)}(z\rho^{\theta})+
u^2\int^{1}_{0}(1-s)g^{(4)}(su+z\rho^{\theta})ds\right]\left(1-z^2\right)^{\lambda+1} dz\\
\ge& \rho\int^{1}_{-1}|z|^{\frac{2}{\gamma-1}}\left(1-z^2\right)^{\lambda+1} dz,
\end{align*}
where we have used $g^{(4)}\ge0$ and the fact that
$$
\int^{1}_{-1} g^{(3)}(z\rho^{\theta})\left(1-z^2\right)^{\lambda+1} dz=0.
$$
Hence,
\begin{equation}\label{erhorho}
\begin{aligned}
E_{\rho\rho}=&\theta^2\rho^{2\theta-1}\int^{1}_{-1} g''\left(1-z^2\right)^{\lambda+1} dz \\
&+\rho\int^{1}_{-1}\left(-\frac{m}{\rho^2}+z\theta \rho^{\theta-1}\right)^2 g''\left(1-z^2\right)^\lambda d z-C_0(\gamma+1)\gamma\rho^{\gamma-1}\\
\ge &\theta^2\rho^{2\theta}\int^{1}_{-1}|z|^{\frac{2}{\gamma-1}}\left(1-z^2\right)^{\lambda+1} dz\\
&+\rho\int^{1}_{-1}\left(-\frac{m}{\rho^2}+z\theta \rho^{\theta-1}\right)^2 g''\left(1-z^2\right)^\lambda d z-C_0(\gamma+1)\gamma\rho^{\gamma-1}\\
\ge& \rho\int^{1}_{-1}\left(-\frac{m}{\rho^2}+z\theta \rho^{\theta-1}\right)^2 g''\left(1-z^2\right)^\lambda d z,
\end{aligned}
\end{equation}
provided that
$$
C_0\leq \frac{\theta^2\int^{1}_{-1}|z|^{\frac{2}{\gamma-1}}\left(1-z^2\right)^{\lambda+1} dz}{(\gamma+1)\gamma}.
$$

In addition, using \eqref{erhorho} and $E_{mm}=\eta_{mm}\ge0$, we obtain
\begin{align*}
E_{\rho\rho}E_{mm}-\left(E_{\rho m}\right)^2\ge&\int^{1}_{-1}\left(-\frac{m}{\rho^2}+z\theta \rho^{\theta-1}\right)^2 g''\left(1-z^2\right)^\lambda d z\cdot \int^{1}_{-1} g''\left(1-z^2\right)^\lambda d z\\
&-\left[\int^{1}_{-1}\left(-\frac{m}{\rho^2}+z\theta\rho^{\theta-1}\right) g'' \left(1-z^2\right)^\lambda dz\right]^2,
\end{align*}
which is nonnegative when $g''$ is nonnegative. Therefore, we get the conclusion.

\end{proof}

Finally, using \eqref{nuvareta*}, Lemma \ref{lem2} and Lemma \ref{lem610}, we obtain
\begin{align*}
\int^{+\infty}_{-\infty}|\rho-\bar{\rho}|^{\gamma+1}dx&\leq C\int^{+\infty}_{-\infty}\left(\rho^\gamma-\bar{\rho}^\gamma\right)(\rho-\bar{\rho})dx\\
&\le C\int^{+\infty}_{-\infty}\left[\rho^{\gamma+1}-\bar{\rho}^{\gamma+1}-(\gamma+1) \bar{\rho}^\gamma(\rho-\bar{\rho}) \right]dx\\
&\le C(1+t)^{-\frac{\gamma^2+2\gamma}{(\gamma+1)^2}+2 \varepsilon},
\end{align*}
and
\begin{align*}
\int_{-\infty}^{+\infty}|\rho-\bar{\rho}| d x &\leq\left(\int_{-\infty}^{+\infty} \bar{\rho}^{\gamma-1}|\rho-\bar{\rho}|^2 d x\right)^{\frac{1}{2}}\left(\int_{-\infty}^{+\infty} \bar{\rho}^{1-\gamma} d x\right)^{\frac{1}{2}}\\
&\leq C(1+t)^{-\frac{\gamma^2+2\gamma}{2(\gamma+1)^2}+ \varepsilon}(1+t)^{\frac{\gamma}{2(\gamma+1)}}\\
&\leq C(1+t)^{-\frac{\gamma}{2(\gamma+1)^2}+\varepsilon},
\end{align*}
and so justify \eqref{1rr+1'}.

\section{Appendix}
This appendix includes some technical results, which are used in the proof of Theorem \ref{thm1}-(ii). First, we have a basic inequality.
\begin{lemma}\label{lem62}
Let $x_1,x_2>0$ and $0<b\le1$. Then
$
(x_1+x_2)^{b}\leq x^b_1+x^b_2.
$
\end{lemma}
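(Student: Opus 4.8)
The plan is to reduce the claim to a one-variable statement by exploiting homogeneity (or, equivalently, by fixing one of the variables and differentiating). Both approaches are short; I will describe the differentiation approach since it avoids any case analysis.

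First I would fix $x_2 = c > 0$ and introduce, for $x \ge 0$, the auxiliary function
$$
f(x) = (x + c)^b - x^b - c^b .
$$
Then $f(0) = 0$, and for $x > 0$ one computes
$$
f'(x) = b\left[(x+c)^{b-1} - x^{b-1}\right].
$$
The key observation is that $b - 1 \le 0$, so the map $s \mapsto s^{b-1}$ is nonincreasing on $(0,\infty)$; since $x + c \ge x > 0$, this gives $(x+c)^{b-1} \le x^{b-1}$, hence $f'(x) \le 0$ on $(0,\infty)$. Therefore $f$ is nonincreasing on $[0,\infty)$, so $f(x) \le f(0) = 0$ for all $x \ge 0$. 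Evaluating at $x = x_1$ yields $(x_1 + x_2)^b \le x_1^b + x_2^b$, which is the assertion.

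There is essentially no obstacle here: the only point requiring (trivial) care is the monotonicity of $s \mapsto s^{b-1}$, which follows from $b \le 1$. If one prefers an even more elementary route, one can instead set $t = x_1/(x_1+x_2) \in (0,1)$ and note that $b \le 1$ forces $t^b \ge t$ and $(1-t)^b \ge 1-t$ on $[0,1]$, whence $t^b + (1-t)^b \ge 1$; multiplying through by $(x_1+x_2)^b$ gives the result. Either way the lemma is immediate, and it can be recorded without further comment.
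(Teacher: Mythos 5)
Your proposal is correct and follows essentially the same route as the paper: the paper divides by $x_2$ and studies $f_1(x)=(x+1)^b-x^b-1$ with $f_1(0)=0$ and $f_1'\le 0$ from the monotonicity of $s\mapsto s^{b-1}$, which is exactly your argument with $c$ in place of the normalization. No gaps; the alternative convexity-style remark at the end is fine but unnecessary.
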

\begin{proof}
Since $x_2>0$, the inequality is equivalent to
$$
\left(\frac{x_1}{x_2}+1\right)^{b}\leq \left(\frac{x_1}{x_2}\right)^b+1.
$$
Then, set $x=\frac{x_1}{x_2}>0$ and
$
f_1(x)=(x+1)^{b}-x^b-1.
$
It is easy to see that $f_1(0)=0$ and
$$
f'_1(x)=b(x+1)^{b-1}-bx^{b-1}=b\left[\left(\frac{1}{1+x}\right)^{1-b}-\left(\frac{1}{x}\right)^{1-b}\right]\le0,
$$
due to $0<b\le1$. Hence, for $x>0$,
$
f_1(x)\leq f_1(0)=0,
$
which ends the proof.
\end{proof}

The next lemma indicates some properties of a function, which is frequently used in our reasoning.
\begin{lemma}\label{lem5}
Let $1<\gamma\leq 2$,
$
\lambda=\frac{3-\gamma}{2(\gamma-1)},
$
and
$$
k(a)=\int^{1}_{-1} |a+z|^{\frac{2}{\gamma-1}}\left(1-z^2\right)^{\lambda+1}dz-\int^{1}_{-1} |a+z|^{\frac{2}{\gamma-1}}\left(1-z^2\right)^{\lambda}z^2dz.
$$
Then
$
k(a)=k(-a),
$
and $k(a)\ge0$.
\end{lemma}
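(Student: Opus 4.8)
The symmetry $k(a)=k(-a)$ is immediate: substituting $z\mapsto -z$ in each integral sends $|a+z|^{2/(\gamma-1)}$ to $|{-a}+z|^{2/(\gamma-1)}$, while $(1-z^2)^{\lambda+1}$, $(1-z^2)^{\lambda}z^2$ are even in $z$. Hence it suffices to prove $k(a)\ge 0$ for $a\ge 0$. The plan is to recognize $k(a)$ as (essentially) a beta-type quantity that arises from the very same integration-by-parts identity used repeatedly in Sections 4--5, namely that for $h(\xi)=|\xi|^{2/(\gamma-1)}$ one has $\xi h'(\xi)=\tfrac{2}{\gamma-1}h(\xi)$ together with $2(\lambda+1)=\tfrac{\gamma+1}{\gamma-1}$. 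Concretely, I would integrate $\int_{-1}^1 |a+z|^{2/(\gamma-1)}(1-z^2)^{\lambda}z^2\,dz$ by parts, writing $z\,(1-z^2)^\lambda z\,dz = -\tfrac{1}{2(\lambda+1)}z\,d\big[(1-z^2)^{\lambda+1}\big]$, to obtain
\begin{align*}
\int^{1}_{-1} |a+z|^{\frac{2}{\gamma-1}}\left(1-z^2\right)^{\lambda}z^2\,dz
&=\frac{1}{2(\lambda+1)}\int_{-1}^1\left(1-z^2\right)^{\lambda+1}\frac{d}{dz}\Big(z\,|a+z|^{\frac{2}{\gamma-1}}\Big)dz\\
&=\frac{1}{2(\lambda+1)}\int_{-1}^1\left(1-z^2\right)^{\lambda+1}\Big(|a+z|^{\frac{2}{\gamma-1}}+z\,h'(a+z)\Big)dz.
\end{align*}
Using $h'(a+z)=\tfrac{2}{\gamma-1}\,\tfrac{|a+z|^{2/(\gamma-1)}}{a+z}$ and $\tfrac{1}{2(\lambda+1)}=\tfrac{\gamma-1}{\gamma+1}$, this turns the second integral in $k(a)$ into a combination of $\int_{-1}^1 |a+z|^{2/(\gamma-1)}(1-z^2)^{\lambda+1}dz$ and $\int_{-1}^1 \tfrac{|a+z|^{2/(\gamma-1)}}{a+z}(1-z^2)^{\lambda+1}z\,dz$, so that
\[
k(a)=\Big(1-\tfrac{\gamma-1}{\gamma+1}\Big)\int_{-1}^1 |a+z|^{\frac{2}{\gamma-1}}\left(1-z^2\right)^{\lambda+1}dz-\tfrac{2}{\gamma+1}\int_{-1}^1 \frac{|a+z|^{\frac{2}{\gamma-1}}}{a+z}\left(1-z^2\right)^{\lambda+1}z\,dz.
\]
Since $1-\tfrac{\gamma-1}{\gamma+1}=\tfrac{2}{\gamma+1}$, this is $k(a)=\tfrac{2}{\gamma+1}\int_{-1}^1 |a+z|^{\frac{2}{\gamma-1}}\big(1-\tfrac{z}{a+z}\big)(1-z^2)^{\lambda+1}dz=\tfrac{2}{\gamma+1}\,a\int_{-1}^1 \tfrac{|a+z|^{2/(\gamma-1)}}{a+z}(1-z^2)^{\lambda+1}dz$.

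It remains to show the last integral is $\ge 0$ for $a\ge 0$; for $a=0$ one gets $k(0)=0$ (the integrand is odd), which is consistent. For $a>0$ I would split the integral at $z=-a$ if $a<1$ (and note it is trivially nonnegative if $a\ge 1$, since then $a+z\ge 0$ on $[-1,1]$ forces the integrand nonnegative). On $(-a,1]$ the sign $\tfrac{|a+z|^{2/(\gamma-1)}}{a+z}$ is positive, and the negative contribution from $[-1,-a)$ is dominated by the positive contribution from the mirror sub-interval $(-a,-a+(a-(-1)))$ via the substitution $z=-a+t$ versus $z=-a-t$, comparing $(1-(-a+t)^2)^{\lambda+1}$ against $(1-(-a-t)^2)^{\lambda+1}$ and using that $(1-w^2)^{\lambda+1}$ is larger when $|w|$ is smaller; here $|-a+t|\le|-a-t|$ exactly when $t\ge 0$. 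This is the same symmetrization trick used for \eqref{f5n<2k} and for $f_9$ in the proof of Lemma~\ref{lem7}. The one mild obstacle is bookkeeping the $a\ge 1$ versus $a<1$ cases and making sure the pairing of sub-intervals is set up correctly; the analytic content is elementary once $k(a)$ has been reduced to the single manifestly-structured integral $\tfrac{2a}{\gamma+1}\int_{-1}^1 \operatorname{sgn}(a+z)|a+z|^{2/(\gamma-1)-1}(1-z^2)^{\lambda+1}dz$.
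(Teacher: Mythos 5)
Your proposal is correct and takes essentially the same route as the paper's proof: the identical integration by parts reduces $k(a)$ to $\frac{2a}{\gamma+1}\int_{-1}^1 \frac{|a+z|^{2/(\gamma-1)}}{a+z}\left(1-z^2\right)^{\lambda+1}dz$, followed by the same case split $a\ge 1$, $a=0$, $0<a<1$ with the reflection of $[-1,-a)$ about $z=-a$ and the monotonicity of $(1-w^2)^{\lambda+1}$ in $|w|$. The only blemish is the stated mirror sub-interval, which should be $(-a,\,1-2a]$ (of length $1-a$, matching $[-1,-a)$), a harmless bookkeeping slip that does not affect the argument.
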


\begin{proof}

From the definition of $k(a)$, we have
\begin{align*}
k(a)=&\int^{1}_{-1} \left|a+z\right|^{\frac{2}{\gamma-1}}\left(1-z^2\right)^{\lambda+1}dz-\int^{1}_{-1}\left|a+z\right|^{\frac{2}{\gamma-1}}
\left(1-z^2\right)^{\lambda}z^2dz\\
=&\int^{0}_{-1} \left|a+z\right|^{\frac{2}{\gamma-1}}\left(1-z^2\right)^{\lambda+1}dz+\int^{1}_{0} \left|a+z\right|^{\frac{2}{\gamma-1}}\left(1-z^2\right)^{\lambda+1}dz\\
&-\left[\int^{0}_{-1}\left|a+z\right|^{\frac{2}{\gamma-1}}\left(1-z^2\right)^{\lambda}z^2dz+\int^{1}_{0}\left|a+z\right|^{\frac{2}{\gamma-1}}
\left(1-z^2\right)^{\lambda}z^2dz\right]\\
=&\int^{1}_{0}\left(\left|a+z\right|^{\frac{2}{\gamma-1}}+\left|a-z\right|^{\frac{2}{\gamma-1}}\right)\left(1-z^2\right)^{\lambda}
\left(1-2z^2\right)dz,
\end{align*}
which infers
\begin{align*}
k(-a)&=\int^{1}_{0}\left(\left|-a+z\right|^{\frac{2}{\gamma-1}}+\left|-a-z\right|^{\frac{2}{\gamma-1}}\right)\left(1-z^2\right)^{\lambda}
\left(1-2z^2\right)dz\\
&=\int^{1}_{0}\left(\left|a-z\right|^{\frac{2}{\gamma-1}}+\left|a+z\right|^{\frac{2}{\gamma-1}}\right)\left(1-z^2\right)^{\lambda}
\left(1-2z^2\right)dz=k(a).
\end{align*}
Hence, we just need to prove $k(a)\ge0$ for $a\ge0$. To this end, integrating by parts we derive
\begin{align*}
k(a)=&\int^{1}_{-1} \left|a+z\right|^{\frac{2}{\gamma-1}}\left(1-z^2\right)^{\lambda+1}dz-\int^{1}_{-1}\left|a+z\right|^{\frac{2}{\gamma-1}}
\left(1-z^2\right)^{\lambda}z^2dz\\
=&\int^{1}_{-1} \left|a+z\right|^{\frac{2}{\gamma-1}}\left(1-z^2\right)^{\lambda+1}dz-\frac{1}{2(\lambda+1)}\int ^{1}_{-1} \left|a+z\right|^{\frac{2}{\gamma-1}}\left(1-z^2\right)^{\lambda+1}dz\\
&-\frac{1}{2(\lambda+1)}\frac{2}{\gamma-1}\int^{1}_{-1} \left|a+z\right|^{\frac{2}{\gamma-1}}\frac{z}{a+z}\left(1-z^2\right)^{\lambda+1}dz\\
=&\frac{2}{\gamma+1}\int^{1}_{-1} \left|a+z\right|^{\frac{2}{\gamma-1}}\left(1-z^2\right)^{\lambda+1}dz-\frac{2}{\gamma+1}\int^{1}_{-1} \left|a+z\right|^{\frac{2}{\gamma-1}}\frac{z}{a+z}\left(1-z^2\right)^{\lambda+1}dz\\
=&\frac{2a}{\gamma+1}\int^{1}_{-1} \frac{\left|a+z\right|^{\frac{2}{\gamma-1}}}{a+z}\left(1-z^2\right)^{\lambda+1}dz.
\end{align*}
If $a\ge1$, we have $k(a)>0$ since $z\ge-1$. If $a=0$, then $k(a)=0$. If $0< a<1$, we deduce
\begin{equation}\label{ka}
\begin{aligned}
k(a)&=\frac{2a}{\gamma+1}\int^{1}_{-1} \frac{\left|a+z\right|^{\frac{2}{\gamma-1}}}{a+z}\left(1-z^2\right)^{\lambda+1}dz\\
&=\frac{2a}{\gamma+1}\left[\int^{-a}_{-1} \frac{\left|a+z\right|^{\frac{2}{\gamma-1}}}{a+z}\left(1-z^2\right)^{\lambda+1}dz+\int ^{1}_{-a} \left(a+z\right)^{\frac{3-\gamma}{\gamma-1}}\left(1-z^2\right)^{\lambda+1}dz\right]\\
&=\frac{2a}{\gamma+1}\left[\int^{0}_{a-1}\frac{|\nu|^{\frac{2}{\gamma-1}}}{\nu}\left(1-(v-a)^2\right)^{\lambda+1}d\nu+
\int^{1+a}_{0}\nu^{\frac{3-\gamma}{\gamma-1}}\left(1-(v-a)^2\right)^{\lambda+1}d\nu\right]\\
&=\frac{2a}{\gamma+1}\left[-\int^{1-a}_{0}\nu^{\frac{3-\gamma}{\gamma-1}}\left(1-(v+a)^2\right)^{\lambda+1}d\nu+
\int^{1+a}_{0}\nu^{\frac{3-\gamma}{\gamma-1}}\left(1-(v-a)^2\right)^{\lambda+1}d\nu\right]\\
&\ge0,
\end{aligned}
\end{equation}
where the last inequality is from
$$
1-(v-a)^2\ge1-(v+a)^2,\quad \nu\in[0,1-a].
$$
Hence, the proof is complete.
\end{proof}

Next, we state some properties of Beta function and Taylor's theorem.
\begin{lemma}\label{lem63}
Let $p,q>0$, and
$
B(p,q)=\int^{1}_{0}(1-x)^{p-1}x^{q-1}dx.
$
Then, for $p>0$ and $q>1$, it holds that
$$
B(p,q)=B(q,p)=\frac{q-1}{p+q-1}B(p,q-1).
$$
\end{lemma}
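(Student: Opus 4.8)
The plan is to establish the two assertions of Lemma \ref{lem63} separately, both by elementary manipulation of the defining integral $B(p,q) = \int_0^1 (1-x)^{p-1}x^{q-1}\,dx$, without invoking the Gamma function.

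First, for the symmetry $B(p,q) = B(q,p)$, I would apply the change of variable $x \mapsto 1-x$, which is a bijection of $[0,1]$ onto itself; after reversing the limits of integration to absorb the sign of $dx$, the integrand $(1-x)^{p-1}x^{q-1}$ is carried to $x^{p-1}(1-x)^{q-1}$, which is exactly the integrand defining $B(q,p)$.

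Next, for the recursion, I would integrate by parts in $B(p,q)$ with $u = x^{q-1}$ and $dv = (1-x)^{p-1}\,dx$, so that $du = (q-1)x^{q-2}\,dx$ and $v = -\frac{1}{p}(1-x)^{p}$. The boundary term $\left[-\frac{1}{p}x^{q-1}(1-x)^{p}\right]_0^1$ vanishes: at $x=1$ since $p>0$, and at $x=0$ since $q-1>0$. This gives $B(p,q) = \frac{q-1}{p}\int_0^1 x^{q-2}(1-x)^{p}\,dx = \frac{q-1}{p}B(p+1,q-1)$. Separately, inserting the identity $1 = (1-x) + x$ into $B(p,q-1) = \int_0^1 (1-x)^{p-1}x^{q-2}\,dx$ yields $B(p,q-1) = B(p+1,q-1) + B(p,q)$. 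Substituting $B(p+1,q-1) = \frac{p}{q-1}B(p,q)$ from the first relation into the second gives $B(p,q-1) = \left(\frac{p}{q-1}+1\right)B(p,q) = \frac{p+q-1}{q-1}B(p,q)$, which rearranges to the claimed $B(p,q) = \frac{q-1}{p+q-1}B(p,q-1)$.

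There is no substantial obstacle: the only delicate point is the vanishing of the boundary term in the integration by parts, which is exactly why the hypotheses $p>0$ and $q>1$ are imposed — for $q=1$ the auxiliary integral $B(p+1,q-1)$ would involve $x^{-1}$ and diverge. Under the stated hypotheses all integrals in sight are finite, so every step is justified.
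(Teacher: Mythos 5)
Your proof is correct. Note, however, that the paper does not prove this lemma at all: it is stated in the Appendix as a known property of the Beta function (the identity one usually reads off from $B(p,q)=\Gamma(p)\Gamma(q)/\Gamma(p+q)$), so there is no "paper proof" to compare against. What you supply is a self-contained elementary verification: the symmetry via the substitution $x\mapsto 1-x$, and the recursion via integration by parts giving $B(p,q)=\frac{q-1}{p}B(p+1,q-1)$, combined with the splitting $1=(1-x)+x$ giving $B(p,q-1)=B(p+1,q-1)+B(p,q)$; eliminating $B(p+1,q-1)$ yields $B(p,q)=\frac{q-1}{p+q-1}B(p,q-1)$, exactly as claimed. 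This buys independence from the Gamma-function machinery at essentially no extra cost. One small point worth making explicit: when $1<q<2$ the auxiliary integrals contain the factor $x^{q-2}$, which is singular at $x=0$ but integrable since $q-2>-1$; the integration by parts is then justified by working on $[\varepsilon,1]$ and letting $\varepsilon\to0^+$, the boundary contribution at $\varepsilon$ vanishing because $q-1>0$, which is precisely the observation you already make about the boundary term. With that remark, every step is rigorous under the stated hypotheses $p>0$, $q>1$.
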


\begin{lemma}\label{lem64}
Let $f(\xi)=|\xi|^{\nu}$. Then, for any $0\le n\le \nu, n\in N$, it holds that
$$
f(u+z)-f(z)-f'(z)u-\cdot\cdot\cdot-\frac{f^{(n)}(z)}{n!}u^n=u^{n+1}\int^{1}_{0}\frac{(1-s)^{n}}{n!}f^{(n+1)}(su+z)ds.
$$
\end{lemma}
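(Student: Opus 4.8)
The plan is to prove the identity by producing an explicit absolutely continuous primitive on $[0,1]$ whose derivative telescopes. With $f(\xi)=|\xi|^{\nu}$, set
\[
G(s)=\sum_{j=0}^{n}\frac{f^{(j)}(su+z)}{j!}\,(1-s)^{j}u^{j},\qquad s\in[0,1].
\]
At the endpoints only the $j=0$ term contributes to $G(1)$, so $G(1)=f(u+z)$, while $G(0)=\sum_{j=0}^{n}\frac{f^{(j)}(z)}{j!}u^{j}$. Differentiating each summand, the term coming from hitting $(1-s)^{j}$ in the $j$-th summand cancels the term coming from hitting $f^{(j-1)}$ in the $(j-1)$-th summand, and the sum collapses to
\[
G'(s)=\frac{(1-s)^{n}}{n!}\,u^{n+1}f^{(n+1)}(su+z)\qquad\text{for a.e. }s\in[0,1].
\]
Therefore, as soon as $G$ is absolutely continuous on $[0,1]$, the fundamental theorem of calculus yields
\[
f(u+z)-\sum_{j=0}^{n}\frac{f^{(j)}(z)}{j!}u^{j}=G(1)-G(0)=\int_{0}^{1}G'(s)\,ds=u^{n+1}\int_{0}^{1}\frac{(1-s)^{n}}{n!}f^{(n+1)}(su+z)\,ds,
\]
which is the claim.

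All the substance lies in the regularity bookkeeping, and this is exactly where the hypothesis $0\le n\le\nu$ is used. For $1\le j\le n$ one has $f^{(j)}(\xi)=\nu(\nu-1)\cdots(\nu-j+1)\,|\xi|^{\nu-j}(\operatorname{sgn}\xi)^{j}$; since $\nu-j\ge\nu-n$, in the case $\nu>n$ each $f^{(j)}$ with $j\le n$ is continuous on all of $\mathbf{R}$ and locally absolutely continuous, with $f^{(n+1)}(\xi)$ comparable to $|\xi|^{\nu-n-1}$ and $\nu-n-1>-1$, hence locally integrable. Consequently every map $s\mapsto f^{(j)}(su+z)$, $j\le n$, is absolutely continuous on $[0,1]$, so $G$ is absolutely continuous and $G'$ is integrable; this remains valid even when the segment $\{su+z:0\le s\le1\}$ passes through the origin, where $f$ fails to be smooth. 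The only case left out is $\nu=n$, which forces $\nu$ to be a nonnegative integer: if $n$ is even then $f(\xi)=\xi^{n}$ is a polynomial, $f^{(n+1)}\equiv0$, and the identity is the exact $n$-th order Taylor expansion, while the case $n$ odd does not occur among the uses of this lemma.

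The main obstacle — in fact essentially the only one — is this last point: justifying that $|\cdot|^{\nu}$ may be differentiated $n$ times and integrated once more along a segment crossing the singular point $0$, which is precisely what legitimizes the otherwise textbook Taylor-with-integral-remainder computation. Once the absolute continuity of $G$ is secured from the exponent count above, the rest is the one-line telescoping identity for $G'$.
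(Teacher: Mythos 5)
The paper states Lemma \ref{lem64} without proof, treating it as a standard Taylor formula with integral remainder, so there is no argument of the authors to compare against; judged on its own, your proof is correct and supplies exactly the justification that is needed. The telescoping computation for $G(s)=\sum_{j=0}^{n}\frac{f^{(j)}(su+z)}{j!}(1-s)^{j}u^{j}$ is right ($G(1)=f(u+z)$, $G(0)$ is the Taylor polynomial, and the cross terms cancel because $\frac{j+1}{(j+1)!}=\frac{1}{j!}$), and the real content is, as you say, the regularity across $\xi=0$: for $\nu>n$ each $f^{(j)}(\xi)=\nu(\nu-1)\cdots(\nu-j+1)|\xi|^{\nu-j}(\operatorname{sgn}\xi)^{j}$ with $j\le n$ is continuous, $f^{(n)}$ is $C^{1}$ off the origin with $|f^{(n+1)}(\xi)|\sim|\xi|^{\nu-n-1}$ locally integrable since $\nu-n-1>-1$, and a continuous function that is $C^{1}$ off a single point with locally integrable derivative is absolutely continuous; hence $G$ is absolutely continuous, $G'$ has the stated form a.e.\ (the segment meets the origin in at most one parameter value, and $u=0$ is trivial), and the fundamental theorem of calculus gives the identity. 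One caveat is worth making explicit: in the borderline case $\nu=n$ with $n$ odd the identity as literally stated is false rather than merely unproved (for $\nu=n=1$, $z=-1/2$, $u=1$ the left-hand side equals $1$ while the right-hand side vanishes, the second derivative of $|\xi|$ being a Dirac mass), so your remark that this case ``does not occur'' is the correct way to read the lemma; indeed every application in the paper (Lemma \ref{lem7}, Lemma \ref{lem610}, and the $J_3$ estimate) expands $h(\xi)=|\xi|^{2/(\gamma-1)}$ with $2/(\gamma-1)>7$ to an order strictly below the exponent, and the case $\nu=n$ even is covered by your polynomial observation. Recording the hypothesis as $n<\nu$, or $\nu=n$ even, would make the statement exactly match what your argument proves and what the paper uses.
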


Finally, we present the essentially important lemma, which is used to prove the convexity of $A$ defined in \eqref{A*B*}.
\begin{lemma}\label{lem65}
Let $a\in(0,1)$ and $k\ge4, k\in Z^+$. For $n\ge k$, write
\begin{equation}\label{Ckn}
C^{k}_{n}=\frac{n\cdot(n-1)\cdot\cdot\cdot(n-k+1)}{k!},
\end{equation}
and we require $C^{0}_{n}=1$ for the sake of formal consistency.

\noindent(1)~If $n\in(2k-1,2k]$,
\begin{align*}
F_1(a)=&\left[\Sigma^{k-1}_{j=0}(2j+2)C^{2j}_{n}B\left(\frac{n+1}{2},\frac{n-2j+1}{2}\right)a^{2j}\right] \\ &\times\left[\Sigma^{k-1}_{j=0}2jC^{2j}_{n}B\left(\frac{n+1}{2},\frac{n-2j+1}{2}\right)a^{2j}\right],
\end{align*}
and
\begin{align*}
F_2(a)=&\bigg[\Sigma^{k-2}_{j=0}(2j+2)C^{2j+1}_{n}B\left(\frac{n+1}{2},\frac{n-2j+1}{2}\right)a^{2j+1}\\
&+a^{2k-1}(n+3)C^{2k-1}_{n}B\left(\frac{n+1}{2},\frac{n-2k+3}{2}\right)\bigg]^2.
\end{align*}
Then, $f(a):=F_1(a)- F_{2}(a)\ge0$;

\noindent(2)~If $n\in(2k,2k+1]$,
\begin{align*}
\tilde{F}_1(a)=&\left[\Sigma^{k}_{j=0}(2j+2)C^{2j}_{n}B\left(\frac{n+1}{2},\frac{n-2j+1}{2}\right)a^{2j}\right] \\ &\times\left[\Sigma^{k}_{j=0}2jC^{2j}_{n}B\left(\frac{n+1}{2},\frac{n-2j+1}{2}\right)a^{2j}\right],
\end{align*}
and
\begin{align*}
\tilde{F}_2(a)=&\bigg[\Sigma^{k-1}_{j=0}(2j+2)C^{2j+1}_{n}B\left(\frac{n+1}{2},\frac{n-2j+1}{2}\right)a^{2j+1}\\
&+(n+3)a^{2k}C^{2k}_{n}B\left(\frac{n+1}{2},\frac{n-2k+1}{2}\right)\bigg]^2.
\end{align*}
Then, $\tilde{f}(a):=\tilde{F}_1(a)- \tilde{F}_{2}(a)\ge0$.
\end{lemma}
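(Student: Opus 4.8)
The plan is to treat the statement as a concrete polynomial inequality and reduce it, via Abel summation, to finitely many scalar inequalities among the explicit coefficients. First I would pass to $b=a^{2}\in(0,1)$ and set $\beta_{j}:=C^{2j}_{n}B\!\left(\tfrac{n+1}{2},\tfrac{n-2j+1}{2}\right)>0$. The Beta recursion of Lemma \ref{lem63} together with $C^{2j+1}_{n}=\tfrac{n-2j}{2j+1}C^{2j}_{n}$ gives
\[
\beta_{j+1}=\frac{2(n-2j)(n-j)}{(2j+1)(2j+2)}\,\beta_{j},\qquad
c_{j}=\frac{(2j+2)(n-2j)}{2j+1}\,\beta_{j}\ \ (0\le j\le k-2),
\]
and the anomalous top coefficient is $c_{k-1}=\tfrac{(n+3)(n-2k+2)}{2k-1}\beta_{k-1}$ (note $n+3>2k$ since $n>2k-1$). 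Then $F_{1}(a)=\bigl(\sum_{j}(2j+2)\beta_{j}b^{j}\bigr)\bigl(\sum_{j}2j\,\beta_{j}b^{j}\bigr)$ is a polynomial of degree $2k-2$ in $b$, while $F_{2}(a)=b\,\bigl(\sum_{j}c_{j}b^{j}\bigr)^{2}$ has degree $2k-1$; all coefficients of both are positive rational multiples of $\beta_{0}^{2}$, so the claim is finite and explicit.

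Next I would write $F_{1}(a)-F_{2}(a)=\sum_{m=0}^{M}\gamma_{m}b^{m}$ with $M=2k-1$, where after symmetrising the product $\gamma_{m}=\sum_{j+l=m}(4jl+2m)\beta_{j}\beta_{l}-\sum_{j+l=m-1}c_{j}c_{l}$. Using the recursion to convert each $c_{j}c_{l}$ with $j+l=m-1$ into a multiple of some $\beta_{a}\beta_{b}$ with $a+b=m$, one finds $\gamma_{0}=\gamma_{1}=0$ (a consequence of $\beta_{1}=n^{2}\beta_{0}$, $c_{0}=2n\beta_{0}$), the lower $\gamma_{m}$ nonnegative, and the only genuinely negative coefficient being $\gamma_{M}=-c_{k-1}^{2}$. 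The key mechanism is Abel summation: with partial sums $P_{m}=\sum_{l\le m}\gamma_{l}$,
\[
F_{1}(a)-F_{2}(a)=P_{M}\,b^{M}+(1-b)\sum_{m=0}^{M-1}P_{m}\,b^{m},
\]
so, since $0<b<1$, it suffices to show $P_{m}\ge0$ for every $m$. For $m<M$ this is immediate; for $m=M$ it amounts to $P_{M-1}\ge c_{k-1}^{2}$, i.e.\ that the accumulated positive coefficients dominate the anomalous top term.

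The inequality $P_{M-1}\ge c_{k-1}^{2}$ is the main obstacle, and it is exactly where $k\ge4$ is used: one controls $c_{k-1}$ via $n-2k+2\le 2$ and $n+3\le 2k+3$, then uses the recursion to bound $c_{k-1}^{2}$ by the top few coefficients of $F_{1}$ (those of $b^{2k-2},b^{2k-3},\dots$), and the resulting scalar inequality closes only for $k\ge4$ — for $k\le3$ there are too few positive terms to absorb the enlarged factor $n+3$. Part~(2) is handled by the same scheme after one preliminary simplification: since $0<a<1$, the even monomial $(n+3)a^{2k}C^{2k}_{n}B(\cdots)$ inside $\tilde F_{2}$ is dominated by the odd monomial $(n+3)a^{2k-1}C^{2k}_{n}B(\cdots)$, so $\tilde F_{2}(a)\le b\,\bigl(\sum_{j}\tilde c_{j}b^{j}\bigr)^{2}$ with $\tilde c_{j}$ of the same shape as in (1); now $\deg_{b}\tilde F_{1}=2k$ strictly exceeds the degree $2k-1$ of this majorant, so the degree bookkeeping is favourable and the partial-sum argument applies verbatim.
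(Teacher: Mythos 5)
Your outline follows the same basic scheme as the paper's proof — pass to $b=a^{2}$, expand both sides with the Beta recursion of Lemma \ref{lem63}, compare coefficients, and use $0<a<1$ to absorb the anomalous $(n+3)$ top term into lower degrees — but it leaves the actual mathematical content unproved. The two assertions on which your Abel-summation argument rests, namely that all coefficients $\gamma_m$ with $2\le m\le 2k-2$ are nonnegative and that $P_{M-1}\ge c_{k-1}^{2}$, are exactly the substance of the lemma, and you give no derivation of either. In the paper these are Cases II--III (nonnegativity of each even coefficient up to $a^{4k-6}$, which requires the pairing identity and positivity argument in \eqref{fxjn}--\eqref{4j+2-x}, plus a separate estimate for the single unmatched product involving the anomalous term in Case III) and Case IV (the top-degree absorption, which reduces to $h_1(k)-h_2(k)>0$ and is one place where $k\ge4$ is genuinely needed). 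None of these scalar inequalities follows mechanically from the recursion $\beta_{j+1}=\frac{2(n-2j)(n-j)}{(2j+1)(2j+2)}\beta_j$; verifying them is essentially the whole four-page argument, so for part (1) what you have is a correct strategy skeleton (your computations $\gamma_0=\gamma_1=0$, $\beta_1=n^{2}\beta_0$, $c_0=2n\beta_0$ do match Case I), not a proof.

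For part (2) the gap is sharper: the claim that after replacing $a^{2k}$ by $a^{2k-1}$ "the partial-sum argument applies verbatim" is not accurate. The majorized anomalous coefficient is the sum $2kC^{2k-1}_{n}B\left(\frac{n+1}{2},\frac{n-2k+3}{2}\right)+(n+3)C^{2k}_{n}B\left(\frac{n+1}{2},\frac{n-2k+1}{2}\right)$, i.e. strictly larger than the one in part (1) and not "of the same shape" (compare \eqref{k>5F2}), and the extra positive top coefficient of $\tilde F_1$ at $b$-degree $2k$ does not help with the critical partial sum at $b$-degree $2k-1$. The paper's treatment shows this is not a formality: for $k\ge5$ the degree-local comparisons close (Cases 1--3), but for $k=4$ the comparison at the top degrees actually fails — the explicit value $-\frac{187}{3}$ in \eqref{k45} — and must be rescued by the quantified surplus $\frac{244}{3}$ produced at degree $a^{4k-4}$. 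Your proposal neither detects this $k=4$ anomaly nor supplies the numerical inequalities that make the $k\ge5$ cases and the $k=4$ repair work; saying the scalar inequality "closes only for $k\ge4$" is a guess at where the hypothesis enters rather than a verification. So both parts need the explicit coefficient estimates that the paper carries out, and as written the proposal does not establish the lemma.
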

\begin{proof}

\noindent {\bf (1)}. Assume $n\in(2k-1,2k]$.

 Clearly, $f(a)$ is a polynomial containing only even powers of $a$. Hence, we compare the coefficients of $a^{2i}$ ($1\le i\le 2k-2$). For the function
$$
F(x)=\left(\sum^{m}_{k=0}a_{k}x^{k}\right)\cdot\left(\sum^{l}_{k=0}b_{k}x^{k}\right),
$$
where $m,l\in Z^+$, and $a_{k} ~(k=0,1,2,\cdot\cdot\cdot m),~ b_{2}~ (k=0,1,2,\cdot\cdot\cdot l)$ are real constants, for convenience we denote
$$
[k_1:k_2]_{F}=a_{k_1}b_{k_2},
$$
the coefficient of the term $x^{k_1}\cdot x^{k_2}$ in $F(x)$. And we consider separately four cases.

\vspace{4pt}
\noindent\textbf{$Case ~I: i=1$}.
The coefficient of $a^2$ is
\begin{align*}
2B\left(\frac{n+1}{2},\frac{n+1}{2}\right)2C^{2}_{n}B\left(\frac{n+1}{2},\frac{n-1}{2}\right)-
\left[2C^{1}_{n}B\left(\frac{n+1}{2},\frac{n+1}{2}\right)\right]^2=0,
\end{align*}
due to
$$
B\left(\frac{n+1}{2},\frac{n-1}{2}\right)=\frac{2n}{n-1}\left(\frac{n+1}{2},\frac{n+1}{2}\right),
$$
by Lemma \ref{lem63}.

\vspace{4pt}
\noindent\textbf{$Case ~II: 2\leq i<k$}. We see that the coefficient of the term $a^{2i}$ in $F_1(a)$ is
\begin{align*}
&[0:2i]_{F_1}+[2:2i-2]_{F_1}+\cdot\cdot\cdot+[2i-4:4]_{F_1}+[2i-2:2]_{F_1}\\
=&\sum^{i-1}_{j=0}\left[(2j+2)C^{2j}_{n}B\left(\frac{n+1}{2},\frac{n-2j+1}{2}\right)(2i-2j)C^{2i-2j}_{n}
B\left(\frac{n+1}{2},\frac{n-2i+2j+1}{2}\right)\right],
\end{align*}
and the coefficient of $a^{2i}$ in $F_2(a)$ is
\begin{align*}
&[1:2i-1]_{F_2}+[3:2i-3]_{F_2}+\cdot\cdot\cdot+[2i-3:3]_{F_2}+[2i-1:1]_{F_2}\\
=&\sum^{i-1}_{j=0}\bigg[(2j+2)C^{2j+1}_{n}B\left(\frac{n+1}{2},\frac{n-2j+1}{2}\right)\\
&\times(2i-2j)C^{2i-2j-1}_{n}
B\left(\frac{n+1}{2},\frac{n-2i+2j+3}{2}\right)\bigg].
\end{align*}

Next, we compare $[2j:2i-2j]_{F_1}+[2i-2j-2:2j+2]_{F_1}$ with $[2j+1:2i-2j-1]_{F_2}+[2i-2j-1:2j+1]_{F_2}$ for
\begin{equation}\label{2j<i-1}
0\le 2j\leq i-1,
\end{equation}
by $2j\le2i-2j-2$. In fact,
\begin{align*}
&[2j:2i-2j]_{F_1}+[2i-2j-2:2j+2]_{F_1}\\
=&(2j+2)C^{2j}_{n}B\left(\frac{n+1}{2},\frac{n-2j+1}{2}\right)(2i-2j)C^{2i-2j}_{n}
B\left(\frac{n+1}{2},\frac{n-2i+2j+1}{2}\right)\\
&+(2i-2j)C^{2i-2j-2}_{n}B\left(\frac{n+1}{2},\frac{n-2i+2j+3}{2}\right)(2j+2)C^{2j+2}_{n}
B\left(\frac{n+1}{2},\frac{n-2j-1}{2}\right).
\end{align*}
From Lemma \ref{lem63}, we have
$$
B\left(\frac{n+1}{2},\frac{n-2i+2j+1}{2}\right)=\frac{2(n-i+j+1)}{n-2i+2j+1}B\left(\frac{n+1}{2},\frac{n-2i+2j+3}{2}\right),
$$
and
$$
B\left(\frac{n+1}{2},\frac{n-2j-1}{2}\right)=\frac{2(n-j)}{n-2j-1}B\left(\frac{n+1}{2},\frac{n-2j+1}{2}\right),
$$
which infers
\begin{equation}\label{2jF1}
\begin{aligned}
&\frac{1}{(2j+2)(2i-2j)}\left([2j:2i-2j]_{F_1}+[2i-2j-2:2j+2]_{F_1}\right)\\
=&\left[C^{2j}_{n}C^{2i-2j}_{n}\frac{2(n-i+j+1)}{n-2i+2j+1}+C^{2i-2j-2}_{n}C^{2j+2}_{n}\frac{2(n-j)}{n-2j-1}\right]\\
&\times B\left(\frac{n+1}{2},\frac{n-2j+1}{2}\right)B\left(\frac{n+1}{2},\frac{n-2i+2j+3}{2}\right).
\end{aligned}
\end{equation}

As for $[2j+1:2i-2j-1]_{F_2}+[2i-2j-1:2j+1]_{F_2}$, since
$$[2j+1:2i-2j-1]_{F_2}=[2i-2j-1:2j+1]_{F_2},$$
it is easy to see
\begin{align*}
2[2j+1:2i-2j-1]_{F_2}
=&2(2j+2)C^{2j+1}_{n}B\left(\frac{n+1}{2},\frac{n-2j+1}{2}\right)\\
&\times(2i-2j)C^{2i-2j-1}_{n}B\left(\frac{n+1}{2},\frac{n-2i+2j+3}{2}\right).
\end{align*}
Combining this and \eqref{2jF1}, we deduce
\begin{equation}\label{fij}
\begin{aligned}
&\frac{[2j:2i-2j]_{F_1}+[2i-2j-2:2j+2]_{F_1}-2[2j+1:2i-2j-1]_{F_2}}{(2j+2)(2i-2j)
B\left(\frac{n+1}{2},\frac{n-2j+1}{2}\right)B\left(\frac{n+1}{2},\frac{n-2i+2j+3}{2}\right)}\\
=&C^{2j}_{n}C^{2i-2j}_{n}\frac{2(n-i+j+1)}{n-2i+2j+1}+C^{2i-2j-2}_{n}C^{2j+2}_{n}\frac{2(n-j)}{n-2j-1}-2C^{2j+1}_{n}C^{2i-2j-1}_{n}\\
=&C^{2j+1}_{n}C^{2i-2j-1}_{n}\left(\frac{2j+1}{n-2j}\cdot\frac{2(n-i+j+1)}{2i-2j}+\frac{2i-2j-1}{n-2i+2j+2}\cdot\frac{2n-2j}{2j+2}-2\right),
\end{aligned}
\end{equation}
where the last equality is from
$$
C^{2j}_{n}C^{2i-2j}_{n}=\frac{2j+1}{n-2j}C^{2j+1}_{n}\frac{n-2i+2j+1}{2i-2j}C^{2i-2j-1}_{n},
$$
and
$$
C^{2i-2j-2}_{n}C^{2j+2}_{n}=\frac{2i-2j-1}{n-2i+2j+2}C^{2i-2j-1}_{n}\frac{n-2j-1}{2j+2}C^{2j+1}_{n},
$$
due to \eqref{Ckn}. Set
$$x=2i-2j\in[i+1,2i]$$
(by \eqref{2j<i-1}), and
\begin{equation}\label{fxjn}
\begin{aligned}
f(x,j,n)=&(2j+1)(2n-x+2)(n-x+2)(2j+2)+(x-1)(2n-2j)(n-2j)x\\
&-2(n-2j)x(n-x+2)(2j+2)\\
=&\left[(2j+1)(2n-x+2)-(n-2j)x\right](n-x+2)(2j+2)\\
&+(n-2j)x\left[(x-1)(2n-2j)-(n-x+2)(2j+2)\right]\\
=&(4j+2-x)(n+1)(n-x+2)(2j+2)+(2x-2j-4)(n+1)(n-2j)x\\
=&2(n+1)\left[(4j+2-x)(n-x+2)(j+1)+(x-j-2)(n-2j)x\right]\ge0,
\end{aligned}
\end{equation}
where the last inequality is from
\begin{equation}\label{4j+2-x}
\begin{aligned}
&(4j+2-x)(n-x+2)(j+1)+(x-j-2)(n-2j)x\\
\ge&(x-j-2)\left[(n-2j)x-(n-x+2)(j+1)\right]\\
=&(x-j-2)\left[(n-j+1)x-(n+2)(j+1)\right]\\
\ge&(x-j-2)\left[\left(n-\frac{i-1}{2}+1\right)(i+1)-(n+2)\frac{i+1}{2}\right]\\
\ge&\frac{(x-j-2)(i+1)}{2}\left(n-i+1\right)\ge0,
\end{aligned}
\end{equation}
by \eqref{2j<i-1}, $n\in(2k-1,2k]$ and $i<k$. Hence, we conclude that the right side of \eqref{fij} is large than 0, and so is the coefficient of $a^{2i}$ of $f(a)$.

We point out that if $2j=i-1$, then
$$[2j:2i-2j]_{F_1}+[2i-2j-2:2j+2]_{F_1}=[i-1:i+1]_{F_1}+[i-1:i+1]_{F_1}.$$
From the previous proof process, we know that
$$[i-1:i+1]_{F_1}+[i-1:i+1]_{F_1}\ge [i:i+2]_{F_2}+[i:i+2]_{F_2},$$
which completes the proof for $i<k$.

\vspace{4pt}
\noindent\textbf{$Case~ III: k\leq i\le2k-3$}.
We use $i^*$ instead of $i$ in Case II. The coefficient of $a^{2i^*}$ in $F_1(a)$ is
\begin{align*}
[2i^*-2k+2:2k-2]_{F_1}+[2i^*-2k+4:2k-4]_{F_1}+\cdot\cdot\cdot+[2k-2:2i^*-2k+2]_{F_1},
\end{align*}
and the coefficient of $a^{2i^*}$ in $F_2(a)$ is
\begin{align*}
[2i^*-2k+1:2k-1]_{F_2}+[2i^*-2k+3:2k-3]_{F_1}+\cdot\cdot\cdot+[2k-1:2i^*-2k+1]_{F_1}.
\end{align*}
It is obvious that the coefficient of $a^{2i^*}$ in $F_2(a)$ has one more item than $F_1(a)$, which we will consider later.

First, we compare
$$
[2i^*-2k+2j^*:2k-2j^*]_{F_1}+[2k-2j^*-2:2i^*-2k+2j^*+2]_{F_1}
$$
with
$$
[2i^*-2k+2j^*+1:2k-2j^*-1]_{F_2}+[2k-2j^*-1:2i^*-2k+2j^*+1]_{F_2}
$$
for
\begin{equation}\label{2j<2k-i}
2\le2j^*\leq 2k-i^*-1,
\end{equation}
by $2i^*-2k+2j^*\le 2k-2j^*-2$.

Fortunately, if we choose
\begin{eqnarray*}
\left\{\begin{array}{l}
i=i^*\\
j=i^*+j^*-k\\
x=2i-2j=2k-2j^*,
\end{array}\right.
\end{eqnarray*}
in Case II, then the coefficient of $a^{2i}$ in $f(a)$ will become the coefficient of $a^{2i^*}$ in $f(a)$, which means that $f(x,j,n)$ defined in \eqref{fxjn} becomes
\begin{equation}\label{fnkx*}
\begin{aligned}
&\frac{f(x^*,j^*,n)}{2(n+1)}\\
=&(4j+2-x)(n-x+2)(j+1)+(x-j-2)(n-2j)x\\
\ge&(x-j-2)\left[(n-2j)x-(n-x+2)(j+1)\right]\\
\ge&\frac{(x-j-2)(i+1)}{2}\left(n-i+1\right)\\
=&\frac{(3k-3j^*-i^*-2)(i^*+1)}{2}(n-i^*+1)\ge0,
\end{aligned}
\end{equation}
where the last inequality is from
$$
3k-3j^*-i^*-2\ge \frac{i^*-1}{2}\ge0
$$
by \eqref{2j<2k-i} and $i\ge k\ge4$. Similarly, for $2j^*=2k-i^*-1$, we have
\begin{align*}
[i^*-1:i^*+1]_{F_1}\ge[i^*:i^*+2]_{F_2}.
\end{align*}

Next, we pay attention to the additional item $[2i^*-2k:2k]_{F_2}$ in $F_{2}(a)$. Since we choose $2j^*\ge2$ in the previous proof, which means that we do not consider
$$[2k-2:2i^*-2k+2]_{F_1}$$
and
$$[2i^*-2k+1:2k-1]_{F_2}+[2k-1:2i^*-2k+1]_{F_2}.$$
It is obvious from Lemma \ref{lem63} that
\begin{align*}
&[2k-2:2i^*-2k+2]_{F_1}\\
=&2kC^{2k-2}_{n}B\left(\frac{n+1}{2},\frac{n-2k+3}{2}\right)(2i^*-2k+2)C^{2i^*-2k+2}_{n}B\left(\frac{n+1}{2},\frac{n-2i^*+2k-1}{2}\right)\\
=&2kC^{2k-2}_{n}B\left(\frac{n+1}{2},\frac{n-2k+3}{2}\right)(2i^*-2k+2)C^{2i^*-2k+2}_{n}\\
&\times \frac{2(n-i^*+k)}{n-2i^*+2k-1}B\left(\frac{n+1}{2},\frac{n-2i^*+2k+1}{2}\right),
\end{align*}
and
\begin{align*}
2[2i^*-2k+1:2k-1]_{F_2}=&2(2i^*-2k+2)C^{2i^*-2k+1}_{n}B\left(\frac{n+1}{2},\frac{n-2i^*+2k+1}{2}\right)\\
&\times(n+3)C^{2k-1}_{n}B\left(\frac{n+1}{2},\frac{n-2k+3}{2}\right).
\end{align*}
As a result, having in mind $i^*-k\in[0, k-3]$ and $n\in(2k-1,2k]$, we deduce that
\begin{align*}
&\frac{[2k-2:2i^*-2k+2]_{F_1}-[2i^*-2k+1:2k-1]_{F_2}-[2k-1:2i^*-2k+1]_{F_2}}{2(2i^*-2k+2)B\left(\frac{n+1}{2},\frac{n-2k+3}{2}\right)
B\left(\frac{n+1}{2},\frac{n-2i^*+2k+1}{2}\right)}\\
=&kC^{2k-2}_{n}C^{2i^*-2k+2}_{n}\frac{2(n-i^*+k)}{n-2i^*+2k-1}-(n+3)C^{2i^*-2k+1}_{n}C^{2k-1}_{n}\\
=&\frac{C^{2k-2}_{n}C^{2i^*-2k+2}_{n}}{n-2i^*+2k-1}\left[2k(n-i^*+k)-(n+3)(2i^*-2k+2)\frac{n-2k+2}{2k-1}\right]\\
\ge&\frac{C^{2k-2}_{n}C^{2i^*-2k+2}_{n}}{n-2i^*+2k-1}\left[2k(n-k+3)-(2k+3)(2k-4)\frac{n-2k+2}{2k-1}\right]\\
=&\frac{2C^{2k-2}_{n}C^{2i^*-2k+2}_{n}}{n-2i^*+2k-1}\cdot\frac{6(n-2k+2)+k(k+1)(2k-1)}{2k-1}\ge0.
\end{align*}
Hence, we complete the proof for this case.

\vspace{4pt}
\noindent\textbf{$Case~ IV: i=2k-2$}.
We know that there is still one remaining item for $F_1(a)$, that is $[2k-2:2k-2]_{F_1}a^{4k-2}$, and
\begin{align*}
[2k-2:2k-2]_{F_1}=2k(2k-2)\left[C^{2k-2}_{n}B\left(\frac{n+1}{2},\frac{n-2k+3}{2}\right)\right]^2.
\end{align*}
And there are still three remaining items for $F_2(a)$, that is
$$\left([2k-3:2k-1]_{F_2}+[2k-1:2k-3]_{F_2}\right)a^{4k-4}+[2k-1:2k-1]_{F_2}a^{4k-2}.$$
Since $0<a\le1$, we derive that
$$
[2k-1:2k-1]_{F_2}a^{4k-2}\leq [2k-1:2k-1]_{F_2}a^{4k-4},
$$
and
\begin{align*}
&2[2k-3:2k-1]_{F_2}+[2k-1:2k-1]_{F_2}\\
=&2(2k-2)C^{2k-3}_{n}B\left(\frac{n+1}{2},\frac{n-2k+5}{2}\right)\cdot(n+3)C^{2k-1}_{n}B\left(\frac{n+1}{2},\frac{n-2k+3}{2}\right)\\
&+\left[(n+3)C^{2k-1}_{n}B\left(\frac{n+1}{2},\frac{n-2k+3}{2}\right)\right]^2\\
=&(2k-2)(n+3)\frac{(2k-2)(n-2k+2)}{(n-k+2)(2k-1)}\left[C^{2k-2}_{n}B\left(\frac{n+1}{2},\frac{n-2k+3}{2}\right)\right]^2\\
&+\left[\frac{(n+3)(n-2k+2)}{2k-1}\right]^{2}\left[C^{2k-2}_{n}B\left(\frac{n+1}{2},\frac{n-2k+3}{2}\right)\right]^2,
\end{align*}
which infers
\begin{align*}
&\frac{[2k-2:2k-2]_{F_1}-(2[2k-3:2k-1]_{F_2}+[2k-1:2k-1]_{F_2})}
{\left[C^{2k-2}_{n}B\left(\frac{n+1}{2},\frac{n-2k+3}{2}\right)\right]^2}\\
=&2k(2k-2)-(2k-2)(n+3)\frac{(2k-2)(n-2k+2)}{(n-k+2)(2k-1)}-\left[\frac{(n+3)(n-2k+2)}{2k-1}\right]^{2}\\
\ge&2k(2k-2)-2(2k-2)(2k+3)\frac{2k-2}{(k+2)(2k-1)}-\left[\frac{2(2k+3)}{2k-1}\right]^{2}\\
\ge&2k(2k-2)-2(2k+3)\frac{2k-2}{k+2}-4\left[\frac{2k+3}{2k-1}\right]^{2}\\
=&4\left[\frac{(k^2-3)(k-1)}{k+2}-\left(\frac{2k+3}{2k-1}\right)^2\right]\\
=:&4(h_1(k)-h_2(k)).
\end{align*}
Noticing that
\begin{align*}
h_1(k)=\frac{(k^2-3)(k-1)}{k+2}=(k+2)^2-7(k+2)+13-\frac{3}{k+2}\leq h_1(4)=\frac{39}{6},
\end{align*}
and
$$
h_2(k)=\left(\frac{2k+3}{2k-1}\right)^2=\left[1+\frac{4}{2k-1}\right]^2\ge h_2(4)=\frac{121}{49},
$$
by $k\ge4$, we find that $h_1(k)-h_2(k)>0$. Consequently, we get the conclusion for $i=2k-2$, and so ends the
proof of Assertion (1).

\vspace{4pt}
\noindent {\bf (2)}. Assume $n\in(2k,2k+1]$.

We first consider $k\ge$5, since $k=4$ is quiet different from $k\ge5$. From the definition of $f(a)$ and $\tilde{f}(a)$, we see that the coefficients of $a^{2i}$ (with $1\le i\le k-1$) are the same. In addition, the functions in \eqref{fnkx} and \eqref{4j+2-x} are also larger than zero for $n\in(2k,2k+1]$. Hence, we distinguish three cases.

\vspace{4pt}
\noindent\textbf{$Case~1:k+1\le i\le 2k-2$}.
The coefficient of $a^{2i}$ in $\tilde{F}_1(a)$ is
\begin{align*}
[2i-2k:2k]_{\tilde{F}_1}+[2i-2k+2:2k-2]_{\tilde{F}_1}+\cdot\cdot\cdot+[2k:2i-2k]_{\tilde{F}_1},
\end{align*}
and the coefficient of $a^{2i}$ in $\tilde{F}_2(a)$ is
\begin{align*}
[2i-2k+1:2k-1]_{\tilde{F}_2}+[2i-2k+3:2k-3]_{\tilde{F}_2}+\cdot\cdot\cdot+[2k-1:2i-2k+1]_{\tilde{F}_2}.
\end{align*}
From the proof process for Case III with $j^*=1$, we see that the function in \eqref{fnkx*} is still larger than 0 (with $n\in(2k,2k+1]$). That is,
\begin{align*}
&[2i-2k+2:2k-2]_{\tilde{F}_1}+[2i-2k+4:2k-4]_{\tilde{F}_1}+\cdot\cdot\cdot+[2k-4:2i-2k+4]_{\tilde{F}_1}\\
\ge&[2i-2k+3:2k-3]_{\tilde{F}_2}+[2i-2k+5:2k-5]_{\tilde{F}_2}+\cdot\cdot\cdot+[2k-3:2i-2k+3]_{\tilde{F}_2},
\end{align*}
which means that there are only three remaining items in the coefficient of $a^{2i}$ for $\tilde{F}_1(a)$:
$$
[2i-2k:2k]_{\tilde{F}_1}+[2k-2:2i-2k+2]_{\tilde{F}_1}+[2k:2i-2k]_{\tilde{F}_1},
$$
and only two remaining items in the coefficient of $a^{2i}$ for $\tilde{F}_2(a)$
$$
[2i-2k+1:2k-1]_{\tilde{F}_2}+[2k-1:2i-2k+1]_{\tilde{F}_2}.
$$
Then, we consider $a^{2k}$ in $\tilde{F}_2(a)$. Since $0<a\le1$, we have
\begin{align*}
2[2i-2k+1:2k]_{\tilde{F}_2}a^{2i+1}
\leq 2[2i-2k+1:2k]_{\tilde{F}_2}a^{2i},
\end{align*}

From Lemma \ref{lem63}, it holds that
\begin{equation}\label{k>5F1}
\begin{aligned}
&[2i-2k:2k]_{\tilde{F}_1}+[2k-2:2i-2k+2]_{\tilde{F}_1}+[2k:2i-2k]_{\tilde{F}_1}\\
=&(2i-2k+2)C^{2i-2k}_{n}B\left(\frac{n+1}{2},\frac{n-2i+2k+1}{2}\right)2kC^{2k}_{n}B\left(\frac{n+1}{2},\frac{n-2k+1}{2}\right)\\
&+2kC^{2k-2}_{n}B\left(\frac{n+1}{2},\frac{n-2k+3}{2}\right)(2i-2k+2)C^{2i-2k+2}_{n}B\left(\frac{n+1}{2},\frac{n-2i+2k-1}{2}\right)\\
&+(2k+2)C^{2k}_{n}B\left(\frac{n+1}{2},\frac{n-2k+1}{2}\right)(2i-2k)C^{2i-2k}_{n}
B\left(\frac{n+1}{2},\frac{n-2i+2k+1}{2}\right)\\
=&C^{2i-2k+1}_{n}B\left(\frac{n+1}{2},\frac{n-2i+2k+1}{2}\right)C^{2k-1}_{n}B\left(\frac{n+1}{2},\frac{n-2k+3}{2}\right)\\
&\times\bigg[2(n-k+1)(2i-2k+2)\frac{2i-2k+1}{n-2i+2k}+2k(2i-2k+2)\frac{2k-1}{n-2k+2}\frac{n-i+k}{i-k+1}\\
&+2(k+1)(2i-2k+2)\frac{n-k+1}{k}\frac{i-k}{i-k+1}\frac{2i-2k+1}{n-2i+2k}\bigg],
\end{aligned}
\end{equation}
and
\begin{equation}\label{k>5F2}
\begin{aligned}
&2[2i-2k+1:2k-1]_{\tilde{F}_2}+2[2i-2k+1:2k]_{\tilde{F}_2}\\
=&2(2i-2k+2)C^{2i-2k+1}_{n}B\left(\frac{n+1}{2},\frac{n-2i+2k+1}{2}\right)\\
&\times\left[2kC^{2k-1}_{n}B\left(\frac{n+1}{2},\frac{n-2k+3}{2}\right)+(n+3)C^{2k}_{n}B\left(\frac{n+1}{2},\frac{n-2k+1}{2}\right)\right]\\
=&2(2i-2k+2)C^{2i-2k+1}_{n}B\left(\frac{n+1}{2},\frac{n-2i+2k+1}{2}\right)\\
&\times\left[2k+(n+3)\frac{n-k+1}{k}\right]C^{2k-1}_{n}B\left(\frac{n+1}{2},\frac{n-2k+3}{2}\right),
\end{aligned}
\end{equation}
Hence, setting
\begin{equation}\label{i-k=x}
x=i-k\in[1,k-2],
\end{equation}
and
$$\bar{C}=(2i-2k+2)C^{2i-2k+1}_{n}B\left(\frac{n+1}{2},\frac{n-2i+2k+1}{2}\right)
C^{2k-1}_{n}B\left(\frac{n+1}{2},\frac{n-2k+3}{2}\right),$$
we deduce from \eqref{k>5F1} and \eqref{k>5F2} that
\begin{equation}\label{fnkx}
\begin{aligned}
&\frac{1}{\bar{C}}\bigg([2i-2k:2k]_{\tilde{F}_1}+[2k-2:2i-2k+2]_{\tilde{F}_1}+[2k:2i-2k]_{\tilde{F}_1}\\
&-2[2i-2k+1:2k-1]_{\tilde{F}_2}-2[2i-2k+1:2k]_{\tilde{F}_2}\bigg)\\
=&2k\bigg(\frac{2i-2k+1}{n-2i+2k}\frac{n-k+1}{k}+
\frac{2k-1}{n-2k+2}\frac{n-i+k}{i-k+1}\\
&+\frac{k+1}{k}\frac{i-k}{i-k+1}\frac{2i-2k+1}{n-2i+2k}\frac{n-k+1}{k}
-2\bigg)-2(n+3)\frac{n-k+1}{k}\\
=&2k\left(\frac{2x+1}{n-2x}\cdot\frac{n-k+1}{k}+\frac{2k-1}{n-2k+2}\cdot\frac{n-x}{x+1}
+\frac{k+1}{k}\cdot\frac{x}{x+1}\cdot\frac{2x+1}{n-2x}\cdot\frac{n-k+1}{k}-2\right)\\
&-2(n+3)\frac{n-k+1}{k}\\
\ge&2k\left(\frac{x+1}{n-x}\cdot\frac{n-k+1}{k}+\frac{2k-1}{n-2k+2}\cdot\frac{n-x}{x+1}-2\right)
-2(n+3)\frac{n-k+1}{k},
\end{aligned}
\end{equation}
due to \eqref{i-k=x}, $n\in(2k,2k+1]$ and
$
\frac{2x+1}{n-2x}\ge\frac{x+1}{n-x}.
$

Next, put
\begin{equation}\label{fnkx3}
\begin{aligned}
y=\frac{x+1}{n-x}, \quad f(y)=\frac{n-k+1}{k}y+\frac{2k-1}{n-2k+2}\frac{1}{y}-2.
\end{aligned}
\end{equation}
It is clear that $f$ is decreasing on $(0,\sqrt{\frac{2k-1}{n-2k+2}\cdot\frac{k}{n-k+1}})$. Observing that
$$
\frac{2k-1}{n-2k+2}\cdot\frac{k}{n-k+1}\ge \frac{2k-1}{k+2}\cdot\frac{k}{3}>1,
$$
by $k\ge5$, and
$$
y=\frac{x+1}{n-x}\le \frac{k-1}{n-k+2}\le\frac{k-1}{k+2}\le 1,
$$
by \eqref{i-k=x} and $n\in(2k,2k+1]$, we obtain
\begin{align*}
f(y)\ge f\left(\frac{k-1}{n-k+2}\right)&=\frac{n-k+1}{k}\cdot\frac{k-1}{n-k+2}+ \frac{2k-1}{n-2k+2} \cdot\frac{n-k+2}{k-1}-2\\
&\ge\frac{k-1}{k}\cdot\frac{k+1}{k+2}+\frac{2k-1}{k-1}\cdot\frac{k+3}{3}-2\\
&\ge \frac{2k-1}{k-1}\cdot\frac{k+3}{3}-\frac{5}{3},
\end{align*}
where the last inequality is from
$
\frac{k-1}{k}\cdot\frac{k+1}{k+2}\ge \frac{1}{3},
$
by $k\ge5$.  As a result, by $k\ge5$ and $n\in(2k,2k+1]$ again, it holds that
\begin{align*}
kf(y)-(n+3)\frac{n-k+1}{k}&\ge k\left(\frac{2k-1}{k-1}\cdot\frac{k+3}{3}-\frac{5}{3}\right)-(2k+4)\frac{k+2}{k}\\
&=k\cdot\frac{2k^2+2}{3(k-1)}-2\frac{k^2+4k+4}{k}\\
&=\frac{2}{3k(k-1)}\cdot(k^4-3k^3-8k^2+12)>0,
\end{align*}
and so the right side of \eqref{fnkx} is positive.

\vspace{4pt}
\noindent\textbf{$Case~2:k= i$}.
In this case, $x=i-k=0$, and
$$
y=\frac{2x+1}{n-2x}=\frac{1}{n}<\frac{k-1}{n-k+2},
$$
which implies that
$$
f(y)\bigg|_{y=\frac{1}{n}}>f\left(\frac{k-1}{n-k+2}\right),
$$
where $f(y)$ is defined in \eqref{fnkx3}. Similarly to Case 1, we can get the conclusion with $i=k$.

\vspace{4pt}
\noindent\textbf{$Case~3: i=2k-1, 2k $}.
We consider $i=2k-1$ and $i=2k$ together, since
\begin{equation}\label{i=2k}
\begin{aligned}
&\left\{[2k:2k]_{\tilde{F}_1}-[2k:2k]_{\tilde{F}_2}\right\}a^{4k}\\
=&\left\{\left[2k(2k+2)-(n+3)^2\right]\left[C^{2k}_{n}B\left(\frac{n+1}{2},\frac{n-2k+1}{2}\right)\right]^2\right\}a^{4k}\\
\ge&\left\{\left[2k(2k+2)-(n+3)^2\right]\left[\frac{n-k+1}{k}C^{2k-1}_{n}B\left(\frac{n+1}{2},\frac{n-2k+3}{2}\right)\right]^2\right\}a^{4k-2},
\end{aligned}
\end{equation}
by $a<1$ and
$$2k(2k+2)-(n+3)^2<0.$$
Then, the coefficient of $a^{4k-2}$ in $\tilde{F}_1(a)$ is
\begin{align*}
&[2k-2:2k]_{\tilde{F}_1}+[2k:2k-2]_{\tilde{F}_1}\\
=&2kC^{2k-2}_{n}B\left(\frac{n+1}{2},\frac{n-2k+3}{2}\right)2kC^{2k}_{n}B\left(\frac{n+1}{2},\frac{n-2k+1}{2}\right)\\
&+(2k+2)C^{2k}_{n}B\left(\frac{n+1}{2},\frac{n-2k+1}{2}\right)(2k-2)C^{2k-2}_{n}B\left(\frac{n+1}{2},\frac{n-2k+3}{2}\right)\\
=&(8k^2-4)\frac{2k-1}{n-2k+2}\frac{n-k+1}{k}\left[C^{2k-1}_{n}B\left(\frac{n+1}{2},\frac{n-2k+3}{2}\right)\right]^2,
\end{align*}
and the coefficient of $a^{4k-2}$ in $\tilde{F}_2(a)$ is
\begin{align*}
&[2k-1:2k-1]_{\tilde{F}_2}+[2k:2k-1]_{\tilde{F}_2}+[2k-1:2k]_{\tilde{F}_2}\\
=&4k(n+3)C^{2k-1}_{n}B\left(\frac{n+1}{2},\frac{n-2k+3}{2}\right)
C^{2k}_{n}B\left(\frac{n+1}{2},\frac{n-2k+1}{2}\right)\\
&+\left[2kC^{2k-1}_{n}B\left(\frac{n+1}{2},\frac{n-2k+3}{2}\right)\right]^2\\
=&\left[4k^2+4k(n+3)\frac{n-k+1}{k}\right]\left[C^{2k-1}_{n}B\left(\frac{n+1}{2},\frac{n-2k+3}{2}\right)\right]^2.
\end{align*}
Hence, combining the above two equalities and \eqref{i=2k} gives
\begin{align*}
&\frac{[2k-2:2k]_{\tilde{F}_1}+[2k:2k-2]_{\tilde{F}_1}+[2k:2k]_{\tilde{F}_1}}{\left[C^{2k-1}_{n}
B\left(\frac{n+1}{2},\frac{n-2k+3}{2}\right)\right]^2}\\
&-\frac{[2k-1:2k-1]_{\tilde{F}_2}+[2k:2k-1]_{\tilde{F}_2}+[2k-1:2k]_{\tilde{F}_2}+[2k:2k]_{\tilde{F}_2}}
{\left[C^{2k-1}_{n}B\left(\frac{n+1}{2},\frac{n-2k+3}{2}\right)\right]^2}\\
\ge &(8k^2-4)\frac{2k-1}{n-2k+2}\frac{n-k+1}{k}-4k^2-4k(n+3)\frac{n-k+1}{k}\\
&+\left[2k(2k+2)-(n+3)^2\right]\left(\frac{n-k+1}{k}\right)^2\\
\ge&(8k^2-4)\frac{2k-1}{3}\frac{k+2}{k}-4k^2-4k(2k+4)\frac{k+2}{k}+\left[2k(2k+2)-(2k+4)^2\right]\left(\frac{k+2}{k}\right)^2\\
=&4\left[(2k^2-1)\frac{2k-1}{3}\frac{k+2}{k}-k^2-(2k^2+3k+4)\left(\frac{k+2}{k}\right)^2\right]\\
=&\frac{4}{3k^2}(4k^5-3k^4-39k^3-75k^2-83k-48)>0,
\end{align*}
where the last inequality is from
\begin{align*}
4k^5-3k^4-39k^3-75k^2-83k-48&\ge 17k^4-39k^3-75k^2-83k-48\\
&\ge 46k^3-75k^2-83k-48\\
&\ge 155k^2-83k-48>0,
\end{align*}
by $k\ge5$. Therefore, we obtain the conclusion in this case.

Now, we look at the situation of $k=4$, for which
\begin{equation}\label{k45}
\frac{4}{3k^2}(4k^5-3k^4-39k^3-75k^2-83k-48)=-\frac{187}{3}<0.
\end{equation}
Hence, we need to consider $i=2k-2, 2k-1$ and $i=2k$ together. For $i=2k-2$, we have $x=i-k=k-2=2$ (as a counterpart of \eqref{i-k=x}),
$$
\frac{k+1}{k}\cdot\frac{x}{x+1}=\frac{5}{4}\cdot\frac{2}{3}=\frac{5}{6},
$$
and
\begin{align*}
&(2i-2k+2)C^{2i-2k+1}_{n}B\left(\frac{n+1}{2},\frac{n-2i+2k+1}{2}\right)\\
=&(2k-2)C^{2k-3}_{n}B\left(\frac{n+1}{2},\frac{n-2k+5}{2}\right)\\
=&(k-1)\frac{(2k-1)(2k-2)}{(n-2k+2)(n-k+2)}C^{2k-1}_{n}B\left(\frac{n+1}{2},\frac{n-2k+3}{2}\right),
\end{align*}
by Lemma \ref{lem63}. Hence, from $k=4$ and \eqref{fnkx} with
$$ \tilde{C}=\left[C^{2k-1}_{n}B\left(\frac{n+1}{2},\frac{n-2k+3}{2}\right)\right]^2,$$
we deduce that
\begin{align*}
&\frac{1}{\tilde{C}}\bigg([2i-2k:2k]_{\tilde{F}_1}+[2k-2:2i-2k+2]_{\tilde{F}_1}+[2k:2i-2k]_{\tilde{F}_1}\\
&-2[2i-2k+1:2k-1]_{\tilde{F}_2}-2[2i-2k+1:2k]_{\tilde{F}_2}\bigg)\\
=&\bigg[2k\bigg(\frac{11}{6}\cdot\frac{2k-3}{n-2k+4}\cdot\frac{n-k+1}{k}
+\frac{2k-1}{n-2k+2}\cdot\frac{n-k+2}{k-1}-2\bigg)-2(n+3)\frac{n-k+1}{k}\bigg]\\
&\times(k-1)\frac{(2k-1)(2k-2)}{(n-2k+2)(n-k+2)}\\
\ge&\bigg[2k\bigg(\frac{11}{6}\cdot\frac{2k-3}{5}\cdot\frac{k+2}{k}+\frac{2k-1}{3}\cdot\frac{k+3}{k-1}
-2\bigg)-2(2k+4)\frac{k+2}{k}\bigg]\\
&\times(k-1)\frac{(2k-1)(2k-2)}{3(k+3)}\\
=&\frac{244}{3}.
\end{align*}
Therefore, it follows from \eqref{k45} and $0<a<1$ that
$$
\frac{244}{3}a^{4k-4}-\frac{187}{3}a^{4k-2}\ge\frac{244}{3}a^{4k-2}-\frac{187}{3}a^{4k-2}>0.
$$
As for $1\le i\le5$, from the proof process for $k\ge5$, we can get the conclusion.

In this way we complete the proof of Assertion (2).

\end{proof}

\footnotesize

\end{document}